\numberwithin{equation}{section}
\theoremstyle{plain}
\newtheorem{theorem}{Theorem}[section]
\newtheorem{lemma}[theorem]{Lemma}
\newtheorem{corollary}[theorem]{Corollary}
\numberwithin{equation}{section}
\theoremstyle{definition}
\newtheorem{definition}[theorem]{Definition}
\newtheorem{example}[theorem]{Example}
\newtheorem{proposition}[theorem]{Proposition}
\newtheorem{remark}[theorem]{Remark}
\theoremstyle{remark}
\newcommand{\RR}{\mathbb{R}}
\newcommand{\NN}{\mathbb{N}}
\begin{document}

\title[Quadratic, homogeneous and Kolmogorov vector fields]{Quadratic, homogeneous and Kolmogorov vector fields on $S^1\times S^2$ and $S^2 \times S^1$}

\author[S. Jana]{Supriyo Jana}
\address{Department of Mathematics, Indian Institute of Technology Madras, India}
\email{supriyojanawb@gmail.com}

\author[S. Sarkar]{Soumen Sarkar}
\address{Department of Mathematics, Indian Institute of Technology Madras, India}
\email{soumen@iitm.ac.in}

\subjclass[2020]{34A34, 34C14, 34C40, 37J35}

\keywords{Polynomial vector fields, First integrals, Algebraic hypersurfaces, Quadratic vector fields, Kolmogorov systems, Lotka-Volterra systems}

\date{\today}
\dedicatory{}

\abstract In this paper, we consider the following two algebraic hypersurfaces
$$S^1\times S^2=\{(x_1,x_2,x_3,x_4)\in \RR^4:(x_1^2+x_2^2-a^2)^2 + x_3^2 + x_4^2 -1=0;\, a>1\}$$ and $$S^2\times S^1=\{(x_1,x_2,x_3,x_4)\in \RR^4:(x_1^2+x_2^2+x_3^2-b^2)^2+x_4^2-1=0;\, b>1\}$$embedded in $\RR^4$. We study polynomial vector fields in $\RR^4$ separately, having $S^1\times S^2$ and $S^2\times S^1$ invariant by their flows. We characterize all linear, quadratic, cubic Kolmogorov and homogeneous vector fields on $S^1\times S^2$ and $S^2\times S^1$. We construct some first integrals of these vector fields and find which of the vector fields are Hamiltonian. We give upper bounds for the number of the invariant meridian and parallel hyperplanes of these vector fields. In addition, we have shown that the upper bounds are sharp in many cases. 
\endabstract

\maketitle


\section{Introduction}
Let $\mathbb{R}[x_1,x_2,x_3,x_4]$ be the polynomial ring over real in 4 variables. Consider the following polynomial differential system in $\mathbb{R}^4$
\begin{equation}\label{system}
    \frac{dx_i}{dt}=P_i(x_1,x_2,x_3,x_4)\,; i=1,...,4
\end{equation}
  and its associated polynomial vector field 
\begin{equation}\label{vector-field}
     \chi = \sum\limits_{i=1}^4 P_i \frac{\partial}{\partial x_i}
\end{equation}
where $P_1,..., P_4\in \mathbb{R}[x_1,x_2,x_3,x_4]$ of degrees $n_1,..., n_4$ respectively. For simplicity, we may write $\chi:=(P_1,...,P_4)$. The number $n:=\max\{n_1,..., n_4\}$ is called the degree of the polynomial vector field \eqref{vector-field}. Polynomial vector fields of degrees 1, 2 and 3 are called linear, quadratic and cubic vector fields, respectively.
\par Polynomial vector fields on various algebraic manifolds embedded in either of $\RR^2$ and $\RR^3$ are already well-studied, for instance, see \cite{LlRe13, BoLlVa13, LlMe11, LlPe06, LlZh11, ChLlPaWa19}. However, only very few investigations are there in $\RR^4$ (\cite{LlXi17, LlMu21}). The advantage of having a polynomial vector field on an algebraic manifold is that if a solution curve of a differential system has an intersection with the manifold, then the whole solution curve lies on that manifold.

Kolmogorov system in $\RR^4_{+}$ has been studied in \cite{LlXi17}. Then the integrability of a class of Lotka-Volterra and Kolmogorov systems in $\RR^n$ have been explored in \cite{LlRaRa20}. We note that a Lotka-Volterra system (i.e., a degree two polynomial Kolmogorov system) in $\RR^N$ describes the evolution of $N$ conflicting species in population biology. It appears in many different areas like neural networks, laser physics, plasma physics, etc.( see for instance \cite{ChHsWu98, HoSi98,Lo20}).

A first integral of the associated vector field of a differential system reduces the dimension of the system by one, which makes the analysis of the system easier. Therefore, finding first integrals is extremely important in the qualitative theory of differential equations. However, the study of the existence or non-existence of first integrals is a difficult problem in general.If the vector field is Hamiltonian, then one easily gets at least one first integral. In addition, if there is another independent first integral of a Hamiltonian system then it is called integrabile in Liouville sense. A Liouville integrable system satisfy the assumptions
of the famous Liouville theorem, see \cite{Ar19}. So, the corresponding system can be solved by quadratures, i.e., by solving a finite number of algebraic equations and computing
a finite number of definite integrals. Also,  Darboux theory of integrability \cite[Chapter 8]{DuLlAr06} is very useful to compute first integrals. 

We consider two algebraic hypersurfaces embedded in $\RR^4$, the products of spheres $S^1\times S^2$ and $S^2\times S^1$, which are manifolds of dimension 3. In \cite{BeSa22}, Benny and the second author proved that product of two arbitrary spheres can be expressed as the zero set of a polynomial. From \cite{BeSa22}, we found
\begin{equation}
\label{eq:s12}
S^1\times S^2=\{(x_1,x_2,x_3,x_4)\in \RR^4:(x_1^2+x_2^2-a^2)^2 + x_3^2 + x_4^2 -1=0;~a>1\}
\end{equation}
and
\begin{equation}
\label{eq:s21}
S^2\times S^1=\{(x_1,x_2,x_3,x_4)\in \RR^4:(x_1^2+x_2^2+x_3^2-b^2)^2+x_4^2-1=0;~b>1\}.
\end{equation}

In this paper, we characterize all linear, quadratic, cubic Kolmogorov and homogeneous vector fields on $S^1\times S^2$ and $S^2\times S^1$. We construct some first integrals of these vector fields and find which of the vector fields are Hamiltonian. We give upper bounds for the number of the invariant meridian and parallel hyperplanes of these vector fields.

 We note that there is a diffeomorphism, possibly not polynomial, between $S^1 \times S^2$ and $S^2 \times S^1$. However, the algebraic equations in \eqref{eq:s12} and \eqref{eq:s21} are algebraically different.  Therefore, we study them as different hypersurfaces in $\RR^4$. 


This paper is organized in the following way. In Section \ref{sec:prel}, we present some basic definitions and useful results on polynomial vector fields.
We recall the criteria to search for the invariant hypersurfaces of a vector field. 

In Section \ref{sec:pvf_on_s1s2}, we characterize linear vector fields on $S^1\times S^2$.
We give a necessary and sufficient condition when a quadratic and a cubic Kolmogorov vector field in $\RR^4$ is a vector field  on $S^1 \times S^2$, see Theorem \ref{thm:quad_fv_s1s2} and \ref{thm:cubic_kolmo_s1s2} respectively. We show that there is no Lotka-Volterra vector field on $S^1 \times S^2$, see Corollary \ref{cor:lv-s1s2}. We prove that there is no Hamiltonian of a quadratic and cubic Kolmogorov vector field on $S^1 \times S^2$, see Theorem \ref{quadratic-hamiltonian} and \ref{thm:cubic-ham-s12} respectively. Moreover, we constructed rational first integral for them in those theorems. If $\chi = (P_1, P_2, P_3, P_4)$ is a vector field in $\RR^4$ with each $P_i$ is a non-zero homogeneous polynomial, then we give some sufficient as well as some necessary conditions for ${(x_1^2+x_2^2-a^2)^2 + x_3^2 + x_4^2 -1}$ to be a first integral of $\chi$, see Lemma \ref{deg-m-n-s12} and Theorem \ref{thm:degree-relation} respectively. We characterize Type-$n$ vector fields on $S^1\times S^2$, see Theorem \ref{type-n-s12} and study some properties of these vector fields.


In Section \ref{sec:pvf_s2s1}, we characterize linear vector fields on $S^2\times S^1$. We exhibit a necessary and sufficient condition when a quadratic and a cubic Kolmogorov vector field in $\RR^4$ is a vector field  on $S^2 \times S^1$, see Theorem \ref{quadratic-thm-s21} and \ref{thm:kolmo_vf_s2s1} respectively. We prove that there is no Lotka-Volterra vector field on $S^1 \times S^2$; see Corollary \ref{cor_lvvf_s2s1}. We show that there is no Hamiltonian of a quadratic and cubic Kolmogorov vector field on $S^2 \times S^1$, see Theorem \ref{thm:noHam_quad} and Theorem \ref{thm:noHam_kolm} respectively. Moreover, we constructed rational first integral for them in those theorems. If $\chi = (P_1, P_2, P_3, P_4)$ is a vector field in $\RR^4$ with each $P_i$ is a homogeneous polynomial, then we discuss some sufficient as well as some necessary conditions for ${(x_1^2+x_2^2 +x_3^2-a^2)^2 + x_4^2 -1}$ is a first integral of $\chi$, see Lemma \ref{deg-m-n-s21} and Lemma \ref{lem:P_4-0} respectively. We characterize Pseudo Type-$n$ vector field on $S^2 \times S^1$, see Theorem \ref{pseudo-type-n} and study some properties of these vector fields.

In Section \ref{sec:mer_par}, we investigate the maximum number of invariant meridian and parallel hyperplanes of the vector fields on $S^1 \times S^2$ and $S^2 \times S^1$. We show when these bounds can be reached. We conclude this paper by establishing that there is no degree one polynomial diffeomorphism between $S^1 \times S^2$ and $S^2 \times S^1$, see Theorem \ref{thm:nopolydiff}.

\section{Preliminaries}\label{sec:prel}
Let $f \in \RR[x_1,...,x_4]$ be a non-constant polynomial. The set $\{f = 0\} \subset \RR^4$ is called an invariant algebraic hypersurface of the vector field $\chi$ if there exists a polynomial ${K\in \RR[x_1,...,x_4]}$ such that
    $$\chi f=\sum\limits_{i=1}^4 P_i\frac{\partial f}{\partial x_i}=Kf.$$
Here, the polynomial $K$ is called the cofactor of $\chi$ for hypersurface $\{f = 0\}$.

One can observe that the gradient $(\frac{\partial f}{\partial x_1},\cdots,\frac{\partial f}{\partial x_4})$ of
$f$ is orthogonal to the vector field $\chi = (P_1,...,P_4)$ at the points of the algebraic hypersurface $\{f = 0\}$. So the vector field $\chi$ is tangent to the hypersurface $\{f = 0\}$. Hence the hypersurface $\{f = 0\}$ is formed by the orbits of the vector field $\chi$. This justifies the name ``invariant algebraic hypersurface'' given to $\{f = 0\}$ because it is invariant under the
flow determined by $\chi$. Therefore, if a solution curve of system \eqref{system} has a point on the algebraic hypersurface $f$,
then the whole solution curve is contained in $f$.

In this paper, we consider polynomial vector fields such that $S^1\times S^2$ and $S^2\times S^1$ of \eqref{eq:s12} and \eqref{eq:s21} respectively are invariant algebraic hypersurfaces of $\chi$. If $S^1\times S^2$ is invariant under $\chi$, then $\chi$ is called a \textit{polynomial vector field on $S^1\times S^2$}. Similarly, we call $\chi$ a \textit{polynomial vector field on $S^2\times S^1$} if $S^2\times S^1$ is invariant under $\chi$. 

Such vector fields are called \textit{polynomial vector fields on $S^1\times S^2$} and \textit{polynomial vector fields on $S^2\times S^1$} respectively.

One of the best tools in order to search for invariant algebraic hypersurfaces is the following. Let
$W$ be a vector subspace of $\RR[x_1,...,x_4]$ generated by the independent
polynomials $v_1 ,... , v_l$, i.e., $W=\langle v_1,...,v_{\ell} \rangle$. The extactic polynomial of $\chi$ associated with $W$ is the polynomial
$$\mathcal{E}_W(\chi)= 
\begin{vmatrix}
v_1 & \dots &v_{\ell}\\
\chi (v_1) &\dots &\chi (v_{\ell})\\
\dots &\dots &\dots\\
\chi^{{\ell}-1}(v_1) &\dots &\chi^{{\ell}-1} (v_{\ell})
\end{vmatrix}
$$
where $\chi^j (v_i) = \chi^{j-1}( \chi (v_i))$ for all $i, j$. From the properties of determinant, it follows that the definition of the extactic polynomial is independent of the chosen
basis of $W$.
The proof of the following is similar to the proof
of \cite[Proposition 1]{LlMe07}.
\begin{proposition}\label{extactic-polynomial}
Let $\chi$ be a polynomial vector field in $\mathbb{R}^n$ and $W$
a finite dimensional vector subspace of $\mathbb{R}[x_1, x_2, \cdots , x_n]$ with $\dim(W) > 1$. Then, an invariant algebraic hypersurface given by $\{f = 0\}$ for the vector field $\chi$, with $f\in W$, is a factor of $\mathcal{E}_W(\chi)$.
\end{proposition}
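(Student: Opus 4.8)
The plan is to reduce to the case where $f$ is itself a member of a chosen basis of $W$, and then to exploit the invariance condition $\chi f = K f$ to show that $f$ divides every entry of one column of the extactic determinant. First I would record the basis-independence of $\mathcal{E}_W(\chi)$, already noted above as a consequence of multilinearity: if two bases $\{w_1,\dots,w_\ell\}$ and $\{v_1,\dots,v_\ell\}$ of $W$ are related by $w_j=\sum_i c_{ij}v_i$ with $(c_{ij})=C$ an invertible \emph{constant} matrix, then, because each $\chi^k$ acts $\RR$-linearly, every row of the matrix formed from the $w_j$ is the corresponding row of the matrix formed from the $v_j$ multiplied on the right by $C$. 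Hence the two determinants differ by the nonzero scalar $\det C$, so divisibility by $f$ of one is equivalent to divisibility of the other, and I am free to pick a convenient basis.

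Since $f$ is a nonzero element of the $\ell$-dimensional space $W$, I would extend it to a basis $\{f,v_2,\dots,v_\ell\}$ and compute $\mathcal{E}_W(\chi)$ with respect to it; its first column is $(f,\chi f,\chi^2 f,\dots,\chi^{\ell-1} f)^{T}$. The crucial step is to prove by induction that $\chi^j f = K_j f$ for some $K_j\in\RR[x_1,\dots,x_n]$ and every $j\ge 0$. The cases $j=0$ (with $K_0=1$) and $j=1$ (with $K_1=K$, the cofactor) follow at once from invariance. For the inductive step, applying $\chi$ and the Leibniz rule to $\chi^j f = K_j f$ gives
\[
\chi^{j+1} f=\chi(K_j f)=\chi(K_j)\,f+K_j\,\chi f=\big(\chi(K_j)+K_j K\big)f,
\]
so the claim holds with $K_{j+1}=\chi(K_j)+K_j K$.

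Therefore every entry of the first column of the extactic matrix is a polynomial multiple of $f$. Factoring $f$ out of that column---equivalently, expanding the determinant along it---shows that $f$ divides $\mathcal{E}_W(\chi)$, which is exactly the assertion. The computations are routine; the only delicate point is the basis-independence reduction, which is what makes the phrase ``$f$ is a factor of $\mathcal{E}_W(\chi)$'' meaningful in the first place. Once that is secured, the induction $\chi^j f = K_j f$ is the heart of the argument, and it uses nothing beyond the definition of invariance and the fact that $\chi$ is a derivation.
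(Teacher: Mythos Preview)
Your argument is correct and is precisely the standard proof: choose a basis of $W$ containing $f$, use the derivation property to show inductively that $\chi^j f$ is a polynomial multiple of $f$, and factor $f$ from the resulting column of the extactic determinant. The paper does not give its own proof but refers to \cite[Proposition 1]{LlMe07}, whose argument is exactly this one, so your approach coincides with what the paper intends.
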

\par The multiplicity of an invariant algebraic hypersurface $f = 0$ with $f\in W$ is the largest positive integer
$k$ such that $f^k$ divides the polynomial $\mathcal{E}_W(\chi)$ when $\mathcal{E}_W(\chi)\neq 0$, otherwise the multiplicity
is infinite. For more details on the multiplicity, see \cite{ChLlPe07, LlZh09}.
\begin{definition}
Let $U$ be an open subset of $\RR^4$. A non-constant analytic map $H \colon U \to \mathbb{R}$ is called a first integral
of the vector field \eqref{vector-field} on $U$ if $H$ is constant on all solution curves of system \eqref{system} contained in $U$ ; i.e. $H(x_1(t),x_2(t), x_3(t),x_4(t)) =$ constant for all values of $t$ for which the solution $(x_1(t),x_2(t), x_3(t),x_4(t))$ is defined and contained in $U$.
\end{definition}
Note that $H$ is
a first integral of the vector field \eqref{vector-field} on $U$ if and only if $\chi H=0$
on $U$.

\begin{definition}
Given a non-constant analytic function $H \colon \RR^4\to \RR$, the vector field
$$\chi=-\frac{\partial H}{\partial x_2}\frac{\partial}{\partial x_1}+\frac{\partial H}{\partial x_1}\frac{\partial}{\partial x_2}-\frac{\partial H}{\partial x_4}\frac{\partial}{\partial x_3}+\frac{\partial H}{\partial x_3}\frac{\partial}{\partial x_4}$$
is called a Hamiltonian vector field in $\RR^4$ with Hamiltonian $H$.
\end{definition}
So, if the system \eqref{system} is written by
$$P_1=-\frac{\partial H}{\partial x_2}, P_2=\frac{\partial H}{\partial x_1}, P_3=-\frac{\partial H}{\partial x_4}, P_4=\frac{\partial H}{\partial x_3}$$
then the vector field \eqref{vector-field} is Hamiltonian. Observe that a Hamiltonian $H$ of a Hamiltonian vector field is a first integral of that vector field.
\begin{definition}
A Hamiltonian system in $\RR^4$ with Hamiltonian $H$  is called integrable in the Liouville sense in $\RR^4$ if the system has a first integral $F$, which is independent with the Hamiltonian $H$.
\end{definition}
Note that the Poisson bracket between
$H$ and $F$ is zero if and only if $F$ is a first integral of the Hamiltonian system in $\RR^4$ with Hamiltonian $H$. See details in \cite{Ar19}.

\begin{definition} The system \eqref{system} is called a polynomial Kolmogorov system in $\RR^4$ when $P_i=x_i\psi_i$ and its associated vector field is called polynomial Kolmogorov vector field where $\psi_i\in \RR[x_1,...,x_4]$ for $1\leq i\leq 4$.

In addition, if $\psi_i$ is a linear polynomial for each $i$, then the polynomial Kolmogorov system is called the Lotka-Volterra system in $\RR^4$, and its associated vector field is called Lotka-Volterra vector field.
\end{definition}

\begin{definition}
   Suppose $\chi=(P_1,...,P_4)$ is a polynomial vector field in $\RR^4$. We say $\chi$ is \textit{Type-$n$ vector field} when $P_1,...,P_4$ are homogeneous polynomials of degree $n$ and $\chi$ is \textit{Pseudo Type-$n$ vector field} when $P_1,P_2,P_3$ are homogeneous polynomials of degree $n$ but $P_4= 0$.
\end{definition}

\section{Polynomial vector fields on $S^1\times S^2$}\label{sec:pvf_on_s1s2}
In this section, we study and characterize polynomial vector fields on $S^1\times S^2$. Recall the presentation of $S^1\times S^2\subset \RR^4$ from \eqref{eq:s12}. A vector field $\chi=(P_1,...,P_4)$ in $\RR^4$ is a vector field on $S^1\times S^2$ if
\begin{equation}\label{vectorfield-s12}
    4(x_1^2+x_2^2-a^2)(P_1x_1 + P_2x_2)+2(P_3x_3 + P_4x_4)=K((x_1^2+x_2^2-a^2)^2 + x_3^2 + x_4^2 -1)
\end{equation}
for some $K\in \RR[x_1,...,x_4]$.
\begin{lemma}\label{polynomial-ch-s12}
    Let $Q_1,Q_2,R_1,R_2\in \RR[x_1,...,x_4]$ be such that  ${Q_1R_1+Q_2R_2}$ is the zero polynomial and $\gcd(R_1,R_2)=1$. Then $Q_1=AR_2,Q_2=-AR_1$ for some polynomial ${A \in \RR[x_1,...,x_4].}$
\end{lemma}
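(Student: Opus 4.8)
The plan is to exploit that $\RR[x_1,\dots,x_4]$ is a unique factorization domain, so that coprimality behaves exactly as it does for the integers. First I would rewrite the hypothesis $Q_1R_1+Q_2R_2=0$ as $Q_1R_1=-Q_2R_2$. This displays $R_2$ as a divisor of the product $Q_1R_1$. Since $\gcd(R_1,R_2)=1$, Euclid's lemma in the UFD $\RR[x_1,\dots,x_4]$ forces $R_2\mid Q_1$; so I can write $Q_1=AR_2$ for some $A\in\RR[x_1,\dots,x_4]$.

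Substituting this back into $Q_1R_1=-Q_2R_2$ gives $AR_1R_2=-Q_2R_2$. Because $\RR[x_1,\dots,x_4]$ is an integral domain and (in the generic case) $R_2\neq 0$, I can cancel $R_2$ to obtain $AR_1=-Q_2$, that is, $Q_2=-AR_1$, which is exactly the claimed conclusion. Thus the same $A$ simultaneously witnesses both identities.

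The degenerate cases need a separate word. Since $\gcd(R_1,R_2)=1$, the polynomials $R_1,R_2$ cannot both vanish. If, say, $R_2=0$, then $\gcd(R_1,0)=R_1$ up to a unit must equal $1$, so $R_1$ is a nonzero constant; the relation then reads $Q_1R_1=0$, whence $Q_1=0$, and taking $A:=-Q_2/R_1\in\RR[x_1,\dots,x_4]$ verifies both $Q_1=AR_2$ and $Q_2=-AR_1$. The case $R_1=0$ is symmetric.

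The only real content is the divisibility step $R_2\mid Q_1$, and the single fact it rests on is unique factorization (equivalently, Gauss's lemma) in the polynomial ring; everything else is routine bookkeeping and cancellation. I expect no genuine obstacle here, the essential point being to invoke the UFD structure of $\RR[x_1,\dots,x_4]$ rather than attempt a direct comparison of coefficients, which would be far more cumbersome.
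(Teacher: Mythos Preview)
Your argument is correct and follows essentially the same route as the paper: both rewrite the relation as $Q_1R_1=-Q_2R_2$ and use coprimality in the UFD $\RR[x_1,\dots,x_4]$ to obtain $R_2\mid Q_1$, then recover $Q_2=-AR_1$ by substitution. The only cosmetic difference is that the paper also writes $Q_2=A'R_1$ separately and then checks $A'=-A$ via $(A+A')R_1R_2=0$, whereas you cancel $R_2$ directly; your handling of the degenerate case $R_2=0$ (which the paper omits) is a nice bit of extra care.
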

\begin{proof}
    $Q_1R_1+Q_2R_2=0$ implies that $Q_1R_1=-Q_2R_2.$ Hence $R_2$ divides $Q_1$ and $R_1$ divides $Q_2$ since $\gcd(R_1,R_2)=1.$ Assuming $Q_1=AR_2$ and $Q_2=A^{'}R_1$, we get $(A+A^{'})R_1R_2=0$. This implies that $A^{'}=-A$.
\end{proof}
\begin{theorem}\label{thm:quad_fv_s1s2}
    Let $\chi=(P_1,...,P_4)$ be a quadratic vector field in $\RR^4$. Then $\chi$ is a quadratic vector field on $S^1\times S^2$ if and only if
    \begin{equation}\label{quadratic-ch-s12}
        \begin{split}
            P_1&=\frac{1}{4}Kx_1+fx_2,\\
            P_2&=\frac{1}{4}Kx_2-fx_1,\\
            P_3&=\frac{k_3}{2}(-a^2(x_1^2+x_2^2)+x_3^2+x_4^2+ a^4-1)+g x_4,\quad \text{and}\\
            P_4&=\frac{k_4}{2}(-a^2(x_1^2+x_2^2)+x_3^2+x_4^2 + a^4-1)-g x_3
        \end{split}
    \end{equation}
    where $K=k_3x_3+k_4x_4$ for some $k_3, k_4 \in \RR$ and $f, g$ are linear polynomials. Moreover, $K$ is the cofactor of $\chi$ for $S^1\times S^2$.
\end{theorem}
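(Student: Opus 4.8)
The statement is a biconditional, and the reverse implication is a routine verification: substituting the expressions \eqref{quadratic-ch-s12} into the left-hand side of \eqref{vectorfield-s12}, the terms carrying $f$ cancel in $P_1x_1+P_2x_2$ and those carrying $g$ cancel in $P_3x_3+P_4x_4$, leaving $P_1x_1+P_2x_2=\tfrac14 K(x_1^2+x_2^2)$ and $P_3x_3+P_4x_4=\tfrac12 K\bigl(-a^2(x_1^2+x_2^2)+x_3^2+x_4^2+a^4-1\bigr)$. Feeding these into \eqref{vectorfield-s12} and regrouping reproduces $K\bigl((x_1^2+x_2^2-a^2)^2+x_3^2+x_4^2-1\bigr)$, confirming that $K=k_3x_3+k_4x_4$ is the cofactor. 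The real work is the forward direction. My plan is to expand the identity \eqref{vectorfield-s12} into its homogeneous components of degrees $0$ through $5$ and solve them from the top down; throughout I abbreviate $S:=P_1x_1+P_2x_2$, $T:=P_3x_3+P_4x_4$, and write $P_i^{(j)}$ for the degree-$j$ homogeneous part of $P_i$.

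First I would pin down the cofactor. Since each $P_i$ is quadratic, the left-hand side of \eqref{vectorfield-s12} has degree at most $5$, while $f$ has degree $4$; comparing degrees forces every homogeneous part of $K$ of degree $\ge 2$ to vanish, so $K$ is at most linear. The degree-$0$ component of \eqref{vectorfield-s12} then reads $0=K^{(0)}(a^4-1)$, and because $a>1$ gives $a^4\ne1$ we obtain $K^{(0)}=0$; thus $K$ is homogeneous linear, say $K=k_1x_1+k_2x_2+k_3x_3+k_4x_4$. Comparing the degree-$5$ and degree-$4$ parts and cancelling the nonzero factor $x_1^2+x_2^2$ gives $4S^{(3)}=K(x_1^2+x_2^2)$ and $S^{(2)}=0$. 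Each is an identity of the shape $Q_1x_1+Q_2x_2=0$, and since $\gcd(x_1,x_2)=1$, Lemma~\ref{polynomial-ch-s12} extracts the ``rotational'' coefficients: degree $5$ yields $P_1^{(2)}=\tfrac14 Kx_1+f^{(1)}x_2$, $P_2^{(2)}=\tfrac14 Kx_2-f^{(1)}x_1$ with $f^{(1)}$ linear, and degree $4$ yields $P_1^{(1)}=f^{(0)}x_2$, $P_2^{(1)}=-f^{(0)}x_1$ with $f^{(0)}$ constant. These already assemble into the claimed form of $P_1,P_2$ once the constant parts are shown to vanish.

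The remaining components handle $P_3,P_4$ and close the loop. The degree-$2$ part, using $K^{(0)}=0$ and $S^{(2)}=0$, collapses to $T^{(2)}=0$, so Lemma~\ref{polynomial-ch-s12} with $\gcd(x_3,x_4)=1$ gives $P_3^{(1)}=g^{(0)}x_4$, $P_4^{(1)}=-g^{(0)}x_3$. The step I expect to be the main obstacle is ruling out the meridian directions $k_1,k_2$ from the cofactor. Here I would first read off $P_1^{(0)},P_2^{(0)}$ from the degree-$3$ component restricted to $x_3=x_4=0$, which shows they are proportional to $k_1,k_2$ respectively; feeding this into the degree-$1$ component and matching the coefficients of $x_1$ and $x_2$ produces $a^4k_1=(a^4-1)k_1$ and the analogue for $k_2$, which—again because $a^4\ne1$—forces $k_1=k_2=0$ (and hence $P_1^{(0)}=P_2^{(0)}=0$). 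This is the heart of the argument and one of the two places where $a>1$ is essential, the other being the vanishing of the constant term of $K$. The same degree-$1$ comparison reads off $P_3^{(0)}=\tfrac{k_3}{2}(a^4-1)$ and $P_4^{(0)}=\tfrac{k_4}{2}(a^4-1)$. Finally, with $k_1=k_2=0$ in hand, the degree-$3$ component reduces to $2T^{(3)}=K\bigl(-a^2(x_1^2+x_2^2)+x_3^2+x_4^2\bigr)$, and a last application of Lemma~\ref{polynomial-ch-s12} in the variables $x_3,x_4$ extracts a linear $g^{(1)}$ and the degree-$2$ parts of $P_3,P_4$. Collecting the homogeneous pieces and setting $f=f^{(0)}+f^{(1)}$, $g=g^{(0)}+g^{(1)}$ then reproduces exactly \eqref{quadratic-ch-s12}.
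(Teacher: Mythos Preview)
Your proposal is correct and essentially identical to the paper's proof: both split \eqref{vectorfield-s12} into homogeneous components of degrees $0$ through $5$, apply Lemma~\ref{polynomial-ch-s12} repeatedly to extract the ``rotational'' pieces, and eliminate $k_1,k_2$ by combining the degree-$3$ information (restricted to $x_3=x_4=0$) with the degree-$1$ information. One tiny remark: the implication $a^4k_1=(a^4-1)k_1\Rightarrow k_1=0$ needs only $1\ne0$, not $a^4\ne1$, so the hypothesis $a>1$ is actually used in your argument only once, to kill the constant part of $K$.
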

\begin{proof}
Suppose $\chi$ is a quadratic vector field on $S^1\times S^2$. So \eqref{vectorfield-s12} gives the following. 
    \begin{dmath}\label{quadratic-s12}
        4(x_1^2+x_2^2-a^2)(\sum_{i=1}^2 P_ix_i) + 2(\sum_{i=3}^4 P_ix_i) = (\sum\limits_{j=1}^4 k_jx_j)((x_1^2+x_2^2-a^2)^2 + x_3^2 + x_4^2 -1),
    \end{dmath} since there is no constant term on the left side.    We write 
    \begin{equation}\label{homogeneous-division}
    P_i=P_i^{(2)}+P_i^{(1)}+P_i^{(0)}
    \end{equation}
    where $P_i^{(j)}$ is the degree $j$ homogeneous part of the polynomial $P_i$. Comparing the degree 5 terms in \eqref{quadratic-s12}, we get
    \begin{dmath*}
        4(x_1^2+x_2^2)(\sum_{i=1}^2 P_i^{(2)}x_i)=(\sum\limits_{j=1}^4 k_jx_j)(x_1^2+x_2^2)^2.
    \end{dmath*}
    Hence
    \begin{dmath}\label{deg-5-s12}
        \sum_{i=1}^2 P_i^{(2)}x_i=\frac{1}{4}(\sum\limits_{j=1}^4 k_jx_j)(x_1^2+x_2^2).
    \end{dmath}
    Also comparing the degree $4$ terms in \eqref{quadratic-s12}, we get
    \begin{dmath*}
        4(x_1^2+x_2^2)(\sum_{i=1}^2 P_i^{(1)}x_i)=0.
    \end{dmath*}
    Hence $\sum\limits_{i=1}^2 P_i^{(1)}x_i=0$. Then, by Lemma \ref{polynomial-ch-s12}, we obtain $P_1^{(1)}=Ax_2$ and $P_2^{(1)}=-Ax_1$ for some $A\in \RR$.
    
    In particular, if $x_3=x_4=0$, then \eqref{quadratic-s12} gives the following.
    \begin{dmath}\label{x_3_x_4-0-s12}
         4(x_1^2+x_2^2-a^2)(\sum_{i=1}^2 P_i(x_1,x_2,0,0)x_i) =(k_1x_1+k_2x_2)((x_1^2+x_2^2-a^2)^2 -1).
    \end{dmath}
    Comparing the degree 3 terms in \eqref{x_3_x_4-0-s12}, we get
\begin{dmath*}
    4(x_1^2+x_2^2)(\sum_{i=1}^2 P_i^{(0)}(x_1,x_2,0,0)x_i )-4a^2( \sum_{i=1}^2 P_i^{(2)}(x_1,x_2,0,0)x_i )=-2a^2(\sum_{i=1}^2 k_ix_i)(x_1^2+x_2^2).
\end{dmath*}
Then, for $x_3=0=x_4$, the above equation together with \eqref{deg-5-s12} implies the following.
\begin{dmath}\label{x_3-0-x_4-s12-quad}
    4(\sum_{i=1}^2 P_i^{(0)}x_i)=-a^2(k_1x_1+k_2x_2).
\end{dmath}
Also comparing the degree 1 terms in \eqref{x_3_x_4-0-s12}, we get
\begin{dmath}
\label{x_3-0-x_4-s12-linear}
    -4a^2(\sum_{i=1}^2 P_i^{(0)}x_i)=(a^4-1)(k_1x_1+k_2x_2).
\end{dmath}
Now, from \eqref{x_3-0-x_4-s12-quad} and \eqref{x_3-0-x_4-s12-linear}, we obtain $k_1=k_2=0$. Hence $P_1^{(0)}=P_2^{(0)}=0$. Therefore, from \eqref{deg-5-s12}, we get the following. 
\begin{dmath}\label{homogeneous-2-s12}
\sum_{i=1}^2 P_i^{(2)}x_i=\frac{1}{4}(k_3x_3+k_4x_4)(x_1^2+x_2^2).
\end{dmath}
Taking $K:=k_3x_3+k_4x_4$, the above equation implies that 
$$(P_1^{(2)}-\frac{1}{4}Kx_1)x_1+(P_2^{(2)}-\frac{1}{4}Kx_2)x_2=0.$$
By Lemma \ref{polynomial-ch-s12},
$$P_1^{(2)}=\frac{1}{4}Kx_1+Bx_2 \text{ and } P_2^{(2)}=\frac{1}{4}Kx_2-Bx_1$$ where $B$ is a linear homogeneous polynomial.
Therefore, we can get $P_1$ and $P_2$ since all the homogeneous parts are obtained for both the polynomials.

Let us now find $P_3$ and $P_4$.
Comparing the degree 1 terms in \eqref{quadratic-s12}, we get
$$P_3^{(0)}=\frac{a^4 -1}{2}k_3, P_4^{(0)}=\frac{a^4 -1}{2}k_4.$$
Comparing the degree 2 terms in \eqref{quadratic-s12}, we get $P_3^{(1)}x_3+P_4^{(1)}x_4=0$ which follows that $P_3^{(1)}=Cx_4, P_4^{(1)}=-Cx_3$ for some $C\in \RR$ using Lemma \ref{polynomial-ch-s12}. Now comparing the degree 3 terms in \eqref{quadratic-s12}, we get
\begin{dmath*}
    -4a^2(\sum_{i=1}^2 P_i^{(2)}x_i)+2\sum_{i=3}^4 P_i^{(2)}x_i=(k_3x_3+k_4x_4)(-2a^2(x_1^2+x_2^2)+x_3^2+x_4^2).
\end{dmath*}
Then using \eqref{homogeneous-2-s12} in the above equation, we have
\begin{dmath*}
    \sum_{i=3}^4 P_i^{(2)}x_i=\frac{1}{2}(k_3x_3+k_4x_4)(-a^2(x_1^2+x_2^2)+x_3^2+x_4^2).
\end{dmath*}
This implies
$$(P_3^{(2)}-\frac{1}{2}k_3(-a^2(x_1^2+x_2^2)+x_3^2+x_4^2))x_3+(P_4^{(2)}-\frac{1}{2}k_4(-a^2(x_1^2+x_2^2)+x_3^2+x_4^2))x_4=0.$$
Therefore, by Lemma \ref{polynomial-ch-s12}, 
$$P_3^{(2)}=\frac{1}{2}k_3(-a^2(x_1^2+x_2^2)+x_3^2+x_4^2)+Dx_4 \quad \text{and} \quad P_4^{(2)}=\frac{1}{2}k_4(-a^2(x_1^2+x_2^2)+x_3^2+x_4^2)-Dx_3$$
where $D\in \RR[x_1,...,x_4]$ is a linear homogeneous polynomial. Hence, $P_3, P_4$ are completely determined.

 If $P_1, \ldots, P_4$ are given by \eqref{quadratic-ch-s12}, then they satisfy \eqref{vectorfield-s12}. Thus, the converse part is true.
\end{proof}
\begin{corollary}\label{cor:linear-s12}
$\chi=(P_1,...,P_4)$ is a linear vector field on $S^1\times S^2$ if and only if there exists $\alpha,\beta\in \RR$ such that 
$$P_1=\alpha x_2, P_2=-\alpha x_1, P_3=\beta x_4, P_4=-\beta x_3.$$

    Observe that if $P_1,...,P_4\neq 0$ then a linear vector field $\chi=(P_1,...,P_4)$ on $S^1\times S^2$ is a Type-1 vector field.
\end{corollary}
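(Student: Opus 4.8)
The plan is to read the statement off as a specialization of Theorem \ref{thm:quad_fv_s1s2}. A degree-one vector field is in particular a vector field whose four components have degree at most two, and the derivation in the proof of Theorem \ref{thm:quad_fv_s1s2} uses only this degree bound: it decomposes each $P_i$ as in \eqref{homogeneous-division} and never assumes that some $P_i^{(2)}$ is nonzero. Hence if $\chi=(P_1,\dots,P_4)$ is a linear vector field on $S^1\times S^2$, its components must already be of the form \eqref{quadratic-ch-s12}, with $K=k_3x_3+k_4x_4$ and $f,g$ linear polynomials. It then remains to impose the extra constraint that each $P_i$ have degree at most one and to track what this forces on the parameters $k_3,k_4,f,g$.

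First I would kill the cofactor. The degree-two homogeneous part of $P_3$ coming from \eqref{quadratic-ch-s12} is $\frac{k_3}{2}\bigl(-a^2(x_1^2+x_2^2)+x_3^2+x_4^2\bigr)+g^{(1)}x_4$, where $g^{(1)}$ denotes the homogeneous linear part of $g$. For $P_3$ to be linear this polynomial must vanish identically; comparing the coefficient of $x_1^2$ gives $-\tfrac{1}{2}a^2k_3=0$, and since $a>1$ we obtain $k_3=0$. The same argument applied to $P_4$ gives $k_4=0$, so $K=0$.

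With $K=0$ the first two components reduce to $P_1=fx_2$ and $P_2=-fx_1$. The degree-two part of $P_1=fx_2$ is $f^{(1)}x_2$, which must vanish, so $f$ is a constant, say $f=\alpha$; this yields $P_1=\alpha x_2$, $P_2=-\alpha x_1$. Likewise, after $K=0$ the remaining components are $P_3=gx_4$, $P_4=-gx_3$, and linearity forces $g$ to be a constant $\beta$, giving $P_3=\beta x_4$, $P_4=-\beta x_3$. This proves the forward implication. The converse is immediate: the displayed $\chi$ is the special case of \eqref{quadratic-ch-s12} with $k_3=k_4=0$, $f=\alpha$, $g=\beta$, so it satisfies \eqref{vectorfield-s12} with cofactor $K=0$; alternatively one checks directly that $P_1x_1+P_2x_2=0$ and $P_3x_3+P_4x_4=0$. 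Finally, for the Type-$1$ remark, if all four components are nonzero then $\alpha\neq 0$ and $\beta\neq 0$, so each $P_i$ is a nonzero homogeneous polynomial of degree one, which is exactly the definition of a Type-$1$ vector field.

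I expect no serious obstacle here: the content is entirely a specialization of Theorem \ref{thm:quad_fv_s1s2}, and the only point needing care is the bookkeeping that extracts $k_3=k_4=0$ and then the vanishing of the linear parts of $f$ and $g$. The one subtle point worth stating explicitly is that Theorem \ref{thm:quad_fv_s1s2} is phrased for quadratic fields, so I would remark that its proof uses only $\deg P_i\le 2$ and therefore applies verbatim to a linear field. Should one prefer to avoid invoking the theorem at all, the same conclusion follows directly from \eqref{vectorfield-s12} by a degree count (the top-degree terms force $K=0$, after which the equation collapses to $P_1x_1+P_2x_2=P_3x_3+P_4x_4=0$) followed by two applications of Lemma \ref{polynomial-ch-s12} with $(R_1,R_2)=(x_1,x_2)$ and $(R_1,R_2)=(x_3,x_4)$.
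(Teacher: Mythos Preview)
Your argument is correct and follows exactly the paper's approach: the paper's proof simply observes that the characterization \eqref{quadratic-ch-s12} is linear if and only if $K=0$ and $f,g$ are constants, which is precisely what you extract (with more detail). Your remark that the proof of Theorem \ref{thm:quad_fv_s1s2} uses only the bound $\deg P_i\le 2$ is a welcome clarification that the paper leaves implicit.
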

\begin{proof}
    The vector field \eqref{quadratic-ch-s12} on $S^1\times S^2$ is linear if and only if $K=0$ and $f,g$ are constants. Hence the result follows.
\end{proof}
\begin{corollary}\label{cor:lv-s1s2}
There is no Lotka-Volterra vector field on $S^1\times S^2$.
\end{corollary}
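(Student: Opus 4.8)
The plan is to combine the classification of quadratic vector fields on $S^1\times S^2$ from Theorem \ref{thm:quad_fv_s1s2} with the defining divisibility of a Kolmogorov system. A Lotka-Volterra vector field is, by definition, a quadratic Kolmogorov vector field, so I would argue by contradiction: suppose $\chi=(P_1,\dots,P_4)$ is a Lotka-Volterra vector field on $S^1\times S^2$. Then, on the one hand, Theorem \ref{thm:quad_fv_s1s2} forces $\chi$ to have the form \eqref{quadratic-ch-s12}, with $K=k_3x_3+k_4x_4$ and $f,g$ linear; on the other hand, the Kolmogorov condition $P_i=x_i\psi_i$ forces $x_i\mid P_i$ for each $i$. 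The goal is to show that these two requirements together force $\chi\equiv 0$, which is impossible for a genuine Lotka-Volterra field.

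First I would exploit the divisibility of $P_1$ and $P_2$. Since $x_1\mid \tfrac14 Kx_1$, the condition $x_1\mid P_1$ reduces to $x_1\mid fx_2$, and because $\gcd(x_1,x_2)=1$ this gives $x_1\mid f$; symmetrically $x_2\mid P_2$ gives $x_2\mid f$. As $f$ is linear whereas $x_1x_2$ has degree two, this forces $f=0$.

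Next I would extract the decisive obstruction from $P_3$ and $P_4$. Since $x_3\mid P_3$, the polynomial $P_3$ can have no constant term; but by \eqref{quadratic-ch-s12} the constant term of $P_3$ is $\tfrac{k_3}{2}(a^4-1)$, because $gx_4$ contributes none. This is where the standing hypothesis $a>1$ is used: $a^4-1\neq 0$, so $k_3=0$, and the same argument applied to $P_4$ yields $k_4=0$. Hence $K=0$, so $P_1=P_2=0$, and the remaining relations collapse to $P_3=gx_4$ and $P_4=-gx_3$. A last application of the coprimality argument---$x_3\mid gx_4$ and $x_4\mid gx_3$ with $\deg g\le 1$---forces $g=0$, so $P_3=P_4=0$ as well, giving $\chi\equiv 0$ and the desired contradiction.

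I do not expect a serious obstacle here: once Theorem \ref{thm:quad_fv_s1s2} is in hand the argument is essentially bookkeeping, and the only hypothesis beyond the algebra of \eqref{quadratic-ch-s12} that is genuinely needed is $a>1$, which is exactly what annihilates the constant terms of $P_3$ and $P_4$ and thereby kills $k_3$ and $k_4$. The one point to watch is to invoke the coprimality of the coordinate monomials correctly, so that the low degrees of $f$ and $g$ really do force them to vanish.
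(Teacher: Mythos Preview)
Your argument is correct and follows essentially the same route as the paper: invoke Theorem \ref{thm:quad_fv_s1s2}, use $x_1\mid f$ and $x_2\mid f$ to kill $f$, then use $x_3\mid P_3$ and $x_4\mid P_4$ to kill $k_3,k_4$ and finally $g$. The only cosmetic difference is that you isolate the constant term $\tfrac{k_i}{2}(a^4-1)$ to force $k_i=0$ (making the use of $a>1$ explicit), whereas the paper states directly that $x_3\mid P_3$ gives $k_3=0$ and $x_3\mid g$ in one breath; the content is the same.
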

\begin{proof}
Suppose $\chi=(P_1,...,P_4)$ defines a Lotka-Volterra vector field on $S^1\times S^2$. Then $x_i$ divides $P_i$ for  $i\in \{1,...,4\}$. Since $\chi$ is a quadratic vector field, it will be of the form \eqref{quadratic-ch-s12}. Now $x_1$ divides $P_1$ and $x_2$ divides $P_2$ implies $x_1$ divides $f$ and $x_2$ divides $f$ respectively. So $x_1x_2$ divides $f$. Therefore, $f$ must be zero since it is linear.

Again, $x_3$ divides $P_3$ implies $k_3=0$ and $x_3$ divides $g$. Similarly, $x_4$ divides $P_4$ implies $k_4=0$ and $x_4$ divides $g$. So $x_3x_4$ divides $g$. Therefore, $g$ must be zero since it is linear. Hence $P_1=\cdots=P_4=0$.
\end{proof}

\begin{theorem}\label{quadratic-hamiltonian}
There is no Hamiltonian of a quadratic vector field on $S^1\times S^2$. However, every quadratic vector field on $S^1\times S^2$ has a rational first integral.
\end{theorem}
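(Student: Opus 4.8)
The plan is to treat the two assertions separately, using throughout the explicit normal form \eqref{quadratic-ch-s12} from Theorem \ref{thm:quad_fv_s1s2}, in which $K=k_3x_3+k_4x_4$ and $f,g$ are linear polynomials.

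\textbf{Nonexistence of a Hamiltonian.} Suppose for contradiction that a quadratic vector field $\chi$ on $S^1\times S^2$ is Hamiltonian with Hamiltonian $H$, so that $P_1=-\partial H/\partial x_2$, $P_2=\partial H/\partial x_1$, $P_3=-\partial H/\partial x_4$, $P_4=\partial H/\partial x_3$. Equality of the mixed second partials of $H$ yields integrability conditions on the $P_i$; in particular the two symplectic-plane divergences must vanish,
$$\frac{\partial P_1}{\partial x_1}+\frac{\partial P_2}{\partial x_2}=0 \qquad\text{and}\qquad \frac{\partial P_3}{\partial x_3}+\frac{\partial P_4}{\partial x_4}=0,$$
together with four cross-conditions obtained by pairing an $x_1,x_2$-derivative with an $x_3,x_4$-derivative. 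I would first substitute \eqref{quadratic-ch-s12} into the left identity. The cofactor contributes a term $\tfrac12 K$ depending only on $x_3,x_4$, while the remaining contribution from $f$ depends only on $x_1,x_2$; the two groups cannot cancel, so the identity separates and forces $K=0$ (that is, $k_3=k_4=0$) and the $x_1,x_2$-linear part of $f$ to vanish. The right identity treats $g$ symmetrically. Feeding $K=0$ back in reduces $\chi$ to $P_1=fx_2,\ P_2=-fx_1,\ P_3=gx_4,\ P_4=-gx_3$, after which the cross-conditions kill the remaining coefficients of $f$ and $g$, making both constant. But then every $P_i$ is linear, so $\chi$ is a linear vector field, contradicting that $\chi$ is quadratic. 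Hence no quadratic vector field on $S^1\times S^2$ is Hamiltonian.

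\textbf{Existence of a rational first integral.} Here the plan is a one-line Darboux argument: produce a second invariant hypersurface and combine it with the defining polynomial. A direct computation using \eqref{quadratic-ch-s12} gives
$$\chi(x_1^2+x_2^2)=2x_1P_1+2x_2P_2=\tfrac12 K\,(x_1^2+x_2^2),$$
so $h:=x_1^2+x_2^2$ is an invariant algebraic hypersurface with cofactor $\tfrac12 K$, whereas $f=(x_1^2+x_2^2-a^2)^2+x_3^2+x_4^2-1$ has cofactor $K$ by \eqref{vectorfield-s12}. Consequently the rational function
$$F=\frac{(x_1^2+x_2^2-a^2)^2+x_3^2+x_4^2-1}{(x_1^2+x_2^2)^2}$$
satisfies $\chi F=\bigl(K-2\cdot\tfrac12 K\bigr)F=0$, so $F$ is a first integral. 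It is non-constant, and since $a>1$ forces $x_1^2+x_2^2\ge a^2-1>0$ on $S^1\times S^2$, it is well-defined and analytic on a neighborhood of the surface; the same $F$ serves every quadratic vector field on $S^1\times S^2$.

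\textbf{Main obstacle.} The computations in the first part are routine linear algebra in the coefficients, and the one genuine idea is the clean separation of variables in the divergence identity, which kills the cofactor $K$ at once. For the second part the only real step is guessing the auxiliary invariant surface $x_1^2+x_2^2$; once its cofactor is seen to be exactly half that of $f$, the first integral writes itself, and the degree-two ratio $f/(x_1^2+x_2^2)^2$ is forced.
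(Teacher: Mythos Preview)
Your proof is correct. The Darboux argument in the second part is exactly what the paper does: the same auxiliary invariant hypersurface $x_1^2+x_2^2=0$ with cofactor $\tfrac12 K$, and the same rational first integral $F=\bigl((x_1^2+x_2^2-a^2)^2+x_3^2+x_4^2-1\bigr)(x_1^2+x_2^2)^{-2}$. Your extra remark that $x_1^2+x_2^2\ge a^2-1>0$ on $S^1\times S^2$, so that $F$ is analytic on a neighborhood of the surface, is a nice addition the paper does not make explicit.

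For the first part your route differs from the paper's. The paper assumes a Hamiltonian $H$ exists and integrates the relations $P_1=-H_{x_2}$, $P_2=H_{x_1}$, $P_3=-H_{x_4}$, $P_4=H_{x_3}$ one at a time, successively determining $H$ up to functions of fewer variables and reading off the constraints $k_3=k_4=0$, $f_1=\cdots=f_4=0$, $g_1=\cdots=g_4=0$ along the way. You instead pass directly to the Schwarz-type integrability conditions on the $P_i$. The two symplectic divergences $\partial_{x_1}P_1+\partial_{x_2}P_2=0$ and $\partial_{x_3}P_3+\partial_{x_4}P_4=0$ give the clean variable-separation that kills $K$ and the ``diagonal'' coefficients of $f$ and $g$; the four cross-conditions then eliminate the remaining ones. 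This is more economical than the paper's explicit antiderivative chase and makes the structure of the obstruction transparent, though the paper's integration has the minor virtue of producing the would-be Hamiltonian in the linear case as a by-product.
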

\begin{proof}
Any quadratic vector field on $S^1\times S^2$ can be described by \eqref{quadratic-ch-s12}.  
    Suppose $H$ is a Hamiltonian of the quadratic vector field \eqref{quadratic-ch-s12} on $S^1\times S^2$, where $f=f_0+\sum\limits_{i=1}^4 f_ix_i$ and ${g=g_0+\sum\limits_{i=1}^4 g_ix_i}$; $f_i, g_i \in \mathbb{R}$. Since ${-\frac{\partial H}{\partial x_2}=P_1 = \frac{1}{4}K x_1 + (f_0 + \sum\limits_{i=1}^4 f_i x_i) x_2}$, we get $$H=-\frac{1}{4}Kx_1x_2-\frac{1}{2}(f_0+f_1x_1+f_3x_3+f_4x_4)x_2^2-\frac{1}{3}f_2x_2^3+H_1(x_1,x_3,x_4),$$ where $H_1$ is a function independent of $x_2$. Now, from $\frac{\partial H}{\partial x_1}=P_2,$ we get $$-\frac{1}{4} K x_2 -\frac{1}{2} f_1 x_2^2 + \frac{\partial H_1}{\partial x_1} = \frac{1}{4} K x_2 -f_0x_1 - (\sum_{i=1}^4 f_ix_i) x_1.$$ 
    
    Thus, $(-\frac{1}{2}K + f_1x_2 + f_2x_1)x_2 = \frac{\partial H_1}{\partial x_1} + (f_0 + f_3x_3 + f_4x_4)x_1$. Therefore,
    \begin{equation}\label{eq:hamiltonian-s12-1}
        K=f_1=f_2=0 ~\mbox{and}~ \frac{\partial H_1}{\partial x_1}=-(f_0+f_3x_3+f_4x_4)x_1,
    \end{equation}
since $H_1$ is independent of $x_2$ and $K = k_3x_3+k_4x_4$.
Hence, $$H_1=-\frac{1}{2}(f_0+f_3x_3+f_4x_4)x_1^2 +H_2(x_3,x_4),$$ where $H_2$ is a function of $x_3,x_4$ only. Therefore, $H=-\frac{1}{2}(f_0+f_3x_3+f_4x_4)(x_1^2+x_2^2)+H_2$. Note that $P_3=gx_4,P_4=-gx_3$ since $K=0$. Now, $-\frac{\partial H}{\partial x_4}=P_3$ implies that $$\frac{1}{2}f_4(x_1^2+ x_2^2) - \frac{\partial H_2}{\partial x_4} = (g_0 + \sum_{i=1}^4 g_i x_i)x_4.$$ Thus,
$$\frac{1}{2}f_4(x_1^2+ x_2^2)-(g_1x_1+g_2x_2)x_4=\frac{\partial H_2}{\partial x_4}+(g_0+g_3x_3+g_4x_4)x_4$$
Therefore,
    \begin{equation}\label{eq:hamiltonian-s12-2}
        f_4=g_1=g_2=0 ~ \quad \mbox{and } \quad \frac{\partial H_2}{\partial x_4}=-(g_0+g_3x_3+g_4x_4)x_4,
    \end{equation}
    since $H_2$ is a funcion of $x_3,x_4$ only. Hence, $$H_2=-\frac{1}{2}(g_0+g_3x_3)x_4^2-\frac{1}{3}g_4x_4^3+H_3(x_3),$$ where $H_3$ is a function of $x_3$ only. Hence $$H =-\frac{1}{2}(f_0+f_3x_3)(x_1^2+x_2^2)-\frac{1}{2}(g_0+g_3x_3)x_4^2-\frac{1}{3}g_4x_4^3+H_3.$$ Again, $\frac{\partial H}{\partial x_3}=P_4$ implies that $-\frac{1}{2}f_3(x_1^2+x_2^2)-\frac{1}{2}g_3x_4^2+H^{'}_3=-(g_0+g_3x_3+g_4x_4).$ Hence \begin{equation}\label{eq:hamiltonian-s12-3}
        f_3=g_3=g_4=0,
    \end{equation}
    since $H_3$ is a function of $x_3$ only. Combining \eqref{eq:hamiltonian-s12-1}, \eqref{eq:hamiltonian-s12-2} and \eqref{eq:hamiltonian-s12-3}, we obtain $f_i=g_i=0$ for $i\in \{1,...,4\}$. Hence the polynomial vector field becomes linear, which contradicts the assumption. So, there is no Hamiltonian of a quadratic vector field on $S^1 \times S^2$.

 Notice that the hypersurfaces given by $(x_1^2+x_2^2-a^2)^2+x_3^2+x_4^2-1=0$ and $x_1^2+x_2^2=0$ are two invariant algebraic hypersurfaces of the vector field \eqref{quadratic-ch-s12} with cofactor $K$ and $\frac{1}{2}K$ respectively. Therefore, by Darboux Integrability Theory (Theorem 5, \cite{LlZh02}), the function ${((x_1^2+x_2^2-a^2)^2+x_3^2+x_4^2-1)(x_1^2+x_2^2)^{-2}}$ is a rational first integral of the vector field \eqref{quadratic-ch-s12}.
\end{proof}
\begin{theorem}\label{thm:cubic_kolmo_s1s2}
Let $\chi=(P_1,...,P_4)$ be a cubic Kolmogorov vector field in $\RR^4$. Then $\chi$ is a vector field on $S^1\times S^2$ if and only if
    \begin{equation}\label{kolmogorov-form-s12}
        \begin{split}
     P_1&=x_1(\frac{1}{4}K +\alpha x_2^2),\\
     P_2&=x_2(\frac{1}{4}K -\alpha x_1^2),\\
     P_3&=x_3(\frac{k_{33}}{2}(-a^2(x_1^2+x_2^2)+x_3^2+x_4^2+a^4-1)+\beta x_4^2),\quad \text{and}\\
     P_4&=x_4(\frac{k_{44}}{2}(-a^2(x_1^2+x_2^2)+x_3^2+x_4^2+a^4-1)-\beta x_3^2)
        \end{split}
    \end{equation}
    where $K=k_{33}x_3^2+k_{44}x_4^2$ with $k_{33}, k_{44} \in \RR$. Moreover, $K$ is the cofactor of $\chi$ for $S^1\times S^2$.
\end{theorem}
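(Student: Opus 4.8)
The plan is to mirror the proof of Theorem~\ref{thm:quad_fv_s1s2}, exploiting the Kolmogorov structure $P_i = x_i\psi_i$ with $\deg\psi_i \le 2$. First I would substitute this into the invariance identity \eqref{vectorfield-s12}, writing its left-hand side as $4(x_1^2+x_2^2-a^2)(x_1^2\psi_1 + x_2^2\psi_2) + 2(x_3^2\psi_3 + x_4^2\psi_4)$. Every term here is divisible by one of $x_1^2,\dots,x_4^2$, so the left-hand side has no constant term; evaluating \eqref{vectorfield-s12} at the origin then forces the constant part of $K$ to vanish since $a^4-1\ne 0$. Comparing top-degree homogeneous parts (the right-hand side carries a factor $(x_1^2+x_2^2)^2$ of degree $4$, while the left-hand side has degree at most $6$) shows $\deg K \le 2$, so I may write $K = K^{(2)} + K^{(1)}$ and decompose each $\psi_i$, together with $\Phi := x_1^2\psi_1 + x_2^2\psi_2$ and $\Psi := x_3^2\psi_3 + x_4^2\psi_4$, into homogeneous components.

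Next I would equate homogeneous parts of \eqref{vectorfield-s12} in each degree from $6$ down to $1$. The degree-$1$ identity reads $(a^4-1)K^{(1)}=0$, giving $K^{(1)}=0$; feeding this back, the degree-$5$ and degree-$3$ identities become $x_1^2\psi_1^{(1)} + x_2^2\psi_2^{(1)} = 0$ and $x_3^2\psi_3^{(1)} + x_4^2\psi_4^{(1)} = 0$. Applying Lemma~\ref{polynomial-ch-s12} to the coprime pairs $(x_1^2,x_2^2)$ and $(x_3^2,x_4^2)$, and noting that the resulting multiplier would have to be homogeneous of negative degree, forces all the linear parts $\psi_i^{(1)}$ to vanish. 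The degree-$6$ identity is $x_1^2\psi_1^{(2)} + x_2^2\psi_2^{(2)} = \tfrac14 K^{(2)}(x_1^2+x_2^2)$, and another application of Lemma~\ref{polynomial-ch-s12} produces $\psi_1^{(2)} = \tfrac14 K^{(2)} + \alpha x_2^2$ and $\psi_2^{(2)} = \tfrac14 K^{(2)} - \alpha x_1^2$ for a constant $\alpha$.

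The delicate step, which I expect to be the main obstacle, is pinning down $K^{(2)}$ and the constant parts $\psi_i^{(0)}$. The degree-$2$ identity expresses $(a^4-1)K^{(2)}$ as a combination of the pure squares $x_1^2,\dots,x_4^2$ with constant coefficients, which already forces $K^{(2)} = k_{11}x_1^2 + k_{22}x_2^2 + k_{33}x_3^2 + k_{44}x_4^2$ and fixes each $\psi_i^{(0)}$ in terms of the $k_{ii}$. To eliminate $k_{11}$ and $k_{22}$ I would examine the degree-$4$ identity: its left-hand side $2\Psi^{(4)} = 2(x_3^2\psi_3^{(2)} + x_4^2\psi_4^{(2)})$ contains no monomial lying purely in $x_1,x_2$, so the pure $(x_1,x_2)$-part of the right-hand side must cancel, yielding a second expression for $\psi_1^{(0)},\psi_2^{(0)}$. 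Matching it against the one from the degree-$2$ identity gives $a^4 k_{11} = (a^4-1)k_{11}$ and likewise for $k_{22}$, hence $k_{11}=k_{22}=0$ and $\psi_1^{(0)}=\psi_2^{(0)}=0$. With these vanishing, the degree-$4$ identity collapses to $2(x_3^2\psi_3^{(2)}+x_4^2\psi_4^{(2)}) = (k_{33}x_3^2+k_{44}x_4^2)\bigl(-a^2(x_1^2+x_2^2)+x_3^2+x_4^2\bigr)$, and a final use of Lemma~\ref{polynomial-ch-s12} with the pair $(x_3^2,x_4^2)$ introduces the constant $\beta$ and determines $\psi_3^{(2)},\psi_4^{(2)}$. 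Reassembling $\psi_i = \psi_i^{(2)} + \psi_i^{(0)}$ recovers exactly \eqref{kolmogorov-form-s12} with $K = k_{33}x_3^2 + k_{44}x_4^2$. The converse is the routine verification that the fields \eqref{kolmogorov-form-s12} satisfy \eqref{vectorfield-s12} with this cofactor.
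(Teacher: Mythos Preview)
Your proposal is correct and follows essentially the same approach as the paper's proof: both substitute $P_i=x_i\psi_i$ into \eqref{vectorfield-s12}, observe that the cofactor is homogeneous of degree~$2$, and then compare homogeneous components of degrees $6,5,4,3,2$, invoking Lemma~\ref{polynomial-ch-s12} on the coprime pairs $(x_1^2,x_2^2)$ and $(x_3^2,x_4^2)$ at the appropriate places. The only cosmetic difference is that the paper first specializes $x_3=x_4=0$ in \eqref{vectorfield-s12} to isolate the relations in $x_1,x_2$ (whence $k_{11}=k_{12}=k_{22}=0$), whereas you extract the pure $(x_1,x_2)$-part of the degree-$4$ identity and match it against the degree-$2$ identity; these are equivalent manipulations.
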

\begin{proof}
Consider a cubic Kolmogorov vector field $\chi=(P_1,...,P_4)$ in $\RR^4$ with $$P_i=x_i \psi_i$$ where $\psi_i\in \RR[x_1,...,x_4]$ is a quadratic polynomial for $i=1,...,4$. 
 We write
    $$\psi_i=\psi_i^{(2)}+\psi_i^{(1)}+\psi_i^{(0)}$$ as in \eqref{homogeneous-division}. \par Suppose $\chi$ is a vector field on $S^1\times S^2$. Then it must satisfy
    \begin{dmath}\label{kolmogorov3-s12}
        4(\sum_{i=1}^2 x_i^2-a^2)(\sum_{i=1}^2 \psi_ix_i^2)+2(\sum_{i=3}^4\psi_ix_i^2)=(\sum\limits_{1\leq i\leq j \leq 4} k_{ij}x_ix_j)((x_1^2+x_2^2-a^2)^2 + x_3^2 + x_4^2 -1),
    \end{dmath}
    since there is no constant and degree 1 term on the left side.
    Comparing the degree 6 terms in \eqref{kolmogorov3-s12}, we get
    \begin{dmath*}
        4(x_1^2+x_2^2)(\sum_{i=1}^2 \psi_i^{(2)}x_i^2)=(\sum\limits_{1\leq i\leq j \leq 4} k_{ij}x_ix_j)(x_1^2+x_2^2)^2.
    \end{dmath*}
    Hence
    \begin{dmath}\label{kolmogorov3-deg-6-s12}
        \sum_{i=1}^2 \psi_i^{(2)}x_i^2=\frac{1}{4}(\sum\limits_{1\leq i\leq j \leq 4} k_{ij}x_ix_j)(x_1^2+x_2^2).
    \end{dmath}
    Also comparing the degree $5$ terms in \eqref{kolmogorov3-s12}, we obtain
    \begin{dmath*}
        4(x_1^2+x_2^2)(\sum_{i=1}^2 \psi_i^{(1)}x_i^2)=0.
    \end{dmath*}
    Hence $\sum\limits_{i=1}^2 \psi_i^{(1)}x_i^2=0$. This implies $x_1^2$ divides $\psi_2^{(1)}$ and $x_2^2$ divides $\psi_1^{(1)}$. Therefore, $\psi_1^{(1)},\psi_2^{(2)}$ must be zero since they are linear polynomials.
    
    In particular, if $x_3=x_4=0$, then \eqref{kolmogorov3-s12} gives the following.
    \begin{dmath}\label{kolmogorov3_x_3_x_4-0-s12}
         4(\sum_{i=1}^2 x_i^2-a^2)(\sum_{i=1}^2 \psi_i(x_1,x_2,0,0)x_i^2)=(\sum\limits_{ 1\leq i\leq j \leq 2} k_{ij}x_ix_j)((x_1^2+x_2^2-a^2)^2 -1).
    \end{dmath}
    Comparing the degree 4 terms in \eqref{kolmogorov3_x_3_x_4-0-s12}, we get
 $$   4(\sum_{i=1}^2 x_i^2)(\sum_{i=1}^2 \psi_i^{(0)}(x_1,x_2,0,0)x_i^2)-4a^2(\sum_{i=1}^2 \psi_i^{(2)}(x_1,x_2,0,0)x_i^2)=-2a^2(\sum\limits_{1 \leq i\leq j \leq} k_{ij}x_ix_j)(\sum_{i=1}^2 x_i^2). $$
Then, for $x_3=0=x_4$, the above equation together with \eqref{kolmogorov3-deg-6-s12} implies the following.
\begin{dmath}\label{eq.x_3=0=x_4}
    4(\sum_{i=1}^2 \psi_i^{(0)}x_i^2)=-a^2(\sum\limits_{1 \leq i\leq j \leq 2} k_{ij}x_ix_j).
\end{dmath}
Also, comparing the degree 2 terms in \eqref{kolmogorov3_x_3_x_4-0-s12}, we get
\begin{dmath}\label{eq.deg2}
    -4a^2(\sum_{i=1}^2 \psi_i^{(0)}x_i^2)=(a^4-1)(\sum\limits_{1 \leq i\leq j \leq 2} k_{ij}x_ix_j).
\end{dmath}
From \eqref{eq.x_3=0=x_4} and \eqref{eq.deg2}, we obtain $k_{ij}=0$ for $1\leq i\leq j\leq 2$. Hence $\psi_1^{(0)}=\psi_2^{(0)}=0$. Therefore, from \eqref{kolmogorov3-deg-6-s12}, we get
\begin{dmath}\label{homogeneous-2-kolmogorov3-s12}
\sum_{i=1}^2 \psi_i^{(2)}x_i^2=\frac{1}{4}(\sum\limits_{\substack{1\leq i\leq 4,\, 3 \leq j\leq 4\\ i\leq j}} k_{ij}x_ix_j)(x_1^2+x_2^2).
\end{dmath}
Taking $K :=\sum\limits_{\substack{1\leq i\leq 4,\, 3 \leq j\leq 4\\ i\leq j}} k_{ij}x_ix_j$, the equation \eqref{homogeneous-2-kolmogorov3-s12} implies that
\begin{dmath*}
(\psi_1^{(2)}-\frac{1}{4}K)x_1^2 + (\psi_2^{(2)}-\frac{1}{4})x_2^2=0.
\end{dmath*}
Hence, by Lemma \ref{polynomial-ch-s12}, $\psi_1^{(2)}=\frac{1}{4}K+\alpha x_2^2$ and $\psi_2^{(2)}=\frac{1}{4}K-\alpha x_1^2$ for some $\alpha\in \RR$. So, we have found all the homogeneous components of $\psi_1$ and $\psi_2$.

Let us now find $\psi_3$ and $\psi_4$. Comparing the coefficients of the degree 2 monomials in \eqref{kolmogorov3-s12}, we obtain
$$\psi_3^{(0)}=\frac{a^4 -1}{2}k_{33}, \quad \psi_4^{(0)}=\frac{a^4 -1}{2}k_{44} \text{ and } k_{ij} =0 \text{ for } 1\leq i\leq 4,3\leq j\leq 4, i<j.$$
Hence  $K=k_{33}x_3^2+k_{44}x_4^2.$ Comparing the degree 3 terms in \eqref{kolmogorov3-s12}, we get $\sum\limits_{i=3}^4 \psi_i^{(1)}x_i^2=0$ which follows that ${\psi_3^{(1)}=\psi_4^{(1)}=0}$, since these are linear polynomials. Now comparing the degree 4 terms in \eqref{kolmogorov3-s12}, we get
\begin{dmath*}
    -4a^2(\sum_{i=1}^2 \psi_i^{(2)}x_i^2)+2(\sum_{i=3}^4 \psi_i^{(2)}x_i^2)=(k_{33}x_3^2+k_{44}x_4^2)(-2a^2(x_1^2+x_2^2)+x_3^2+x_4^2).
\end{dmath*}
Then using \eqref{homogeneous-2-kolmogorov3-s12} in the above equation, we have
\begin{dmath*}
    \sum_{i=3}^4 \psi_i^{(2)}x_i^2=\frac{1}{2}(k_{33}x_3^2+k_{44}x_4^2)(-a^2(x_1^2+x_2^2)+x_3^2+x_4^2).
\end{dmath*}
This implies $$(\psi_3^{(2)}-\frac{1}{2}k_{33}(-a^2(x_1^2+x_2^2)+x_3^2+x_4^2))x_3^2+(\psi_4^{(2)}-\frac{1}{2}k_{44}(-a^2(x_1^2+x_2^2)+x_3^2+x_4^2))x_4^2=0.$$
Therefore, by Lemma \ref{polynomial-ch-s12},
$$\psi_3^{(2)}=\frac{1}{2}k_{33}(-a^2(x_1^2+x_2^2)+x_3^2+x_4^2)+\beta x_4^2\quad \text{and} \quad \psi_4^{(2)}=\frac{1}{2}k_{44}(-a^2(x_1^2+x_2^2)+x_3^2+x_4^2)-\beta x_3^2$$
for some $\beta \in \RR$. Hence, $\psi_3,\psi_4$ are completely determined. 

 If $P_1, \ldots, P_4$ are given by \eqref{kolmogorov-form-s12}, then they satisfy \eqref{vectorfield-s12}. Thus, the converse part is true.
\end{proof}
\begin{theorem}\label{thm:cubic-ham-s12}
    There is no Hamiltonian of a cubic Kolmogorov vector field on $S^1\times S^2$. However, every cubic Kolmogorov vector field on $S^1\times S^2$ has a rational first integral.
\end{theorem}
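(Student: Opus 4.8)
The plan is to follow the template of Theorem~\ref{quadratic-hamiltonian}, now starting from the characterization of cubic Kolmogorov vector fields on $S^1\times S^2$ recorded in \eqref{kolmogorov-form-s12}. Any such field is determined by the constants $k_{33},k_{44},\alpha,\beta\in\RR$, and being genuinely cubic it cannot have all four of these vanish. The non-existence of a Hamiltonian will be proved by assuming one exists and showing that the four Hamilton equations $P_1=-\partial H/\partial x_2$, $P_2=\partial H/\partial x_1$, $P_3=-\partial H/\partial x_4$, $P_4=\partial H/\partial x_3$ force $k_{33}=k_{44}=\alpha=\beta=0$, contradicting cubicity.

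First I would integrate $P_1=x_1(\tfrac14 K+\alpha x_2^2)$ with respect to $x_2$ to obtain
\[
H=-\tfrac14 K\,x_1x_2-\tfrac{\alpha}{3}x_1x_2^3+H_1(x_1,x_3,x_4),
\]
where $H_1$ is independent of $x_2$ and $K=k_{33}x_3^2+k_{44}x_4^2$. Imposing $\partial H/\partial x_1=P_2=x_2(\tfrac14 K-\alpha x_1^2)$ then yields an identity in which $\partial H_1/\partial x_1$, a polynomial free of $x_2$, must equal $\tfrac12 K\,x_2+\tfrac{\alpha}{3}x_2^3-\alpha x_1^2x_2$, whose monomials $x_2x_3^2$, $x_2x_4^2$, $x_2^3$ and $x_1^2x_2$ all carry an odd power of $x_2$. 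Matching coefficients forces $k_{33}=k_{44}=\alpha=0$ together with $\partial H_1/\partial x_1=0$, so that $H=H_1(x_3,x_4)$ and the field collapses to $P_1=P_2=0$, $P_3=\beta x_3x_4^2$, $P_4=-\beta x_3^2x_4$.

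The step I expect to be the real obstacle, and the point where this proof genuinely departs from the quadratic case, is that the first two Hamilton equations do not by themselves kill the parameter $\beta$. To finish I would use the remaining two equations: $-\partial H_1/\partial x_4=P_3=\beta x_3x_4^2$ gives $H_1=-\tfrac{\beta}{3}x_3x_4^3+h(x_3)$, and substituting into $\partial H_1/\partial x_3=P_4=-\beta x_3^2x_4$ produces $-\tfrac{\beta}{3}x_4^3+h'(x_3)=-\beta x_3^2x_4$. Since $h'(x_3)$ is free of $x_4$, comparing the $x_4^3$ coefficients forces $\beta=0$. Hence all of $k_{33},k_{44},\alpha,\beta$ vanish, the vector field is identically zero, and no cubic Kolmogorov field on $S^1\times S^2$ can be Hamiltonian.

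For the rational first integral I would reuse the two invariant hypersurfaces from the quadratic case. The surface $(x_1^2+x_2^2-a^2)^2+x_3^2+x_4^2-1=0$ is invariant with cofactor $K$ by Theorem~\ref{thm:cubic_kolmo_s1s2}, while a direct computation gives
\[
\chi(x_1^2+x_2^2)=2x_1P_1+2x_2P_2=\tfrac12 K\,(x_1^2+x_2^2),
\]
the $\alpha$-terms cancelling automatically, so $\{x_1^2+x_2^2=0\}$ is invariant with cofactor $\tfrac12 K$. Applying the Darboux integrability theorem (Theorem~5 of \cite{LlZh02}) with exponents $1$ and $-2$, the combination $\bigl((x_1^2+x_2^2-a^2)^2+x_3^2+x_4^2-1\bigr)(x_1^2+x_2^2)^{-2}$ has identically zero cofactor and is therefore a rational first integral of every cubic Kolmogorov vector field on $S^1\times S^2$.
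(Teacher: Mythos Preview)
Your argument is correct, and the rational first integral half matches the paper exactly. For the non-Hamiltonian half you take a somewhat different route: the paper does not integrate the Hamilton equations from scratch but instead invokes \cite[Theorem~1]{LlXi17}, which asserts that any Hamiltonian of a cubic Kolmogorov system in $\RR^4$ must already have the restricted shape
\[
H=x_1x_2\!\!\sum_{i+j\le 2} a_{ij}x_1^i x_2^j+x_3x_4\!\!\sum_{i+j\le 2} b_{ij}x_3^i x_4^j+c_0x_1x_2x_3x_4,
\]
and then compares coefficients against \eqref{kolmogorov-form-s12} to annihilate all the $a_{ij},b_{ij},c_0$. Your approach, modelled on Theorem~\ref{quadratic-hamiltonian}, integrates $-\partial H/\partial x_2=P_1$ directly and proceeds equation by equation; this is more self-contained (no external citation needed) and arguably cleaner here, since the four parameters $k_{33},k_{44},\alpha,\beta$ are killed in two short steps. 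The trade-off is that the paper's method would scale more uniformly to higher-degree Kolmogorov fields once the cited structural result is in hand.
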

\begin{proof}
Any cubic Kolmogorov vector field on $S^1\times S^2$ can be described by \eqref{kolmogorov-form-s12}. Suppose $H$ is a Hamiltonian of the vector field \eqref{kolmogorov-form-s12}. Then $$H=x_1x_2(\sum\limits_{i+j=0}^2 a_{ij}x_1^i x_2^j)+x_3x_4(\sum\limits_{i+j=0}^2 b_{ij}x_3^i x_4^j)+c_0x_1x_2x_3x_4$$ by \cite[Theorem 1]{LlXi17}. Since $-\frac{\partial H}{\partial x_2}=P_1$, we get 
$$-x_1(\sum\limits_{i+j=0}^2 a_{ij}x_1^i x_2^j)-x_1x_2(a_{01}+a_{11}x_1+2a_{02}x_2)-c_0x_1x_3x_4=x_1(\frac{1}{4}K+\alpha x_2^2).$$
Thus, $K=a_{00}=a_{10}=a_{01}=a_{11}=a_{20}=c_0=0$. Hence, $H=a_{02}x_1x_2^3+x_3x_4(\sum\limits_{i+j=0}^2 b_{ij}x_3^i x_4^j)$. Again, $-\frac{\partial H}{\partial x_4}=P_3$ implies 
$$-x_3(\sum\limits_{i+j=0}^2 b_{ij}x_3^i x_4^j)-x_3x_4(b_{01}+b_{11}x_3+2b_{02}x_4)=\beta x_3x_4^2.$$ Then $b_{00}=b_{10}=b_{01}=b_{11}=b_{20}=0$. Hence, $H=a_{02}x_1x_2^3+b_{02}x_3x_4^3$. Now from $\frac{\partial H}{\partial x_1}=P_2=-\alpha x_1^2x_2$ and $\frac{\partial H}{\partial x_3}=P_4=-\beta x_3^2x_4$, we obtain that $H=0$, which is absurd. Hence, a cubic Kolmogorov vector field on $S^1\times S^2$ cannot have any Hamiltonian.

 Let $\chi=(P_1, ..., P_4)$ be a vector field of the form \eqref{kolmogorov-form-s12}. Then the hypesurfaces given by $(x_1^2+x_2^2-a^2)^2+x_3^2+x_4^2-1=0$ and  $x_1^2+x_2^2=0$ are invariant algebraic hypersurfaces of $\chi$ with cofactors $K$ and $\frac{1}{2}K$ respectively.
Hence by Darboux Integrability Theory (\cite[Theorem 5]{LlZh02}), the function
$((x_1^2+x_2^2-a^2)^2+x_3^2+x_4^2-1)(x_1^2+x_2^2)^{-2}$ is a rational first integral of $\chi$.
\end{proof}

\begin{lemma}\label{deg-m-n-s12}
Suppose $\chi=(P_1,...,P_4)$ is a polynomial vector field on $S^1\times S^2$ where $P_i$ are homogeneous polynomials with $\deg(P_1)=\deg(P_2)=m$ and $\deg(P_3)=\deg(P_4)=n$. If $m-1\leq n\leq m+3$ then $G=(x_1^2+x_2^2-a^2)^2 + x_3^2 + x_4^2 -1$ is a first integral of $\chi$.
\end{lemma}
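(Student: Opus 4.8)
The plan is to show that the cofactor $K$ of $\chi$ for $S^1\times S^2$ is forced to vanish. Since $H$ is a first integral exactly when $\chi H=0$, and $\chi$ is a vector field on $S^1\times S^2$, the defining relation \eqref{vectorfield-s12} reads
\[
\chi G = 4(x_1^2+x_2^2-a^2)(P_1 x_1 + P_2 x_2) + 2(P_3 x_3 + P_4 x_4) = K G ,
\]
so that $G$ is a first integral of $\chi$ if and only if $KG=0$, i.e. (as $\RR[x_1,\dots,x_4]$ is an integral domain and $G\neq 0$) if and only if $K=0$. The whole argument is a comparison of the homogeneous degrees occurring on the two sides, using that the $P_i$ are homogeneous and that $a>1$.

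First I would decompose $G$ into homogeneous parts $G=G_0+G_2+G_4$ with $G_0=a^4-1$, $G_2=-2a^2(x_1^2+x_2^2)+x_3^2+x_4^2$ and $G_4=(x_1^2+x_2^2)^2$; here $G_0\neq 0$ (this is where $a>1$ enters) and $G_4$ is a nonzero divisor. Writing $A:=P_1x_1+P_2x_2$ (homogeneous of degree $m+1$) and $B:=P_3x_3+P_4x_4$ (homogeneous of degree $n+1$), the left side becomes $\chi G=4(x_1^2+x_2^2)A-4a^2A+2B$, which is supported only in the homogeneous degrees $m+3$, $m+1$ and $n+1$. The key numerical observation is that for $m-1\le n\le m+3$ one has $n+1\in[m,m+4]$, so the set $\{m+1,m+3,n+1\}$ has largest element minus smallest element at most $3$; hence the \emph{spread} of $\chi G$ (its largest minus its smallest homogeneous degree actually present) is at most $3$, and possible cancellations among the three blocks can only shrink the support further.

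Next I would examine $KG$ under the assumption $K\neq 0$. Let $d$ and $D$ be the lowest and highest degrees appearing in $K$. The only way to produce degree $d$ in $KG$ is $K^{(d)}G_0$, so the degree-$d$ part equals $(a^4-1)K^{(d)}\neq 0$; likewise the only way to produce the top degree is $K^{(D)}G_4$, so the degree-$(D+4)$ part equals $K^{(D)}G_4\neq 0$. Since $D\ge d$, the spread of $KG$ is at least $(D+4)-d\ge 4$. Comparing with the previous paragraph gives $4\le 3$, a contradiction; therefore $K=0$, so $\chi G=0$ and $G$ is a first integral of $\chi$.

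The step I expect to need the most care is the lower bound ``spread of $KG\ge 4$'': one must verify that neither the bottom contribution $K^{(d)}G_0$ nor the top contribution $K^{(D)}G_4$ is cancelled by any other product $K^{(d')}G_{2e}$, which rests precisely on $G_0\neq 0$ and on $(x_1^2+x_2^2)^2$ being a nonzero divisor. The matching upper bound ``spread of $\chi G\le 3$'' is then a short finite check over $n\in\{m-1,m,m+1,m+2,m+3\}$, with cancellations only helping. It is worth noting that this degree window is sharp in spirit: outside it the two supports can separate far enough to accommodate a nonzero $K$, which is exactly why the hypothesis $m-1\le n\le m+3$ is imposed.
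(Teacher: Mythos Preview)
Your proof is correct and follows essentially the same degree-comparison strategy as the paper: force $K=0$ by observing that the homogeneous degrees present in $\chi G$ cannot accommodate the spread forced by $KG$ when $K\neq 0$. The paper carries this out via an explicit case split ($m-1\le n\le m+2$ versus $n=m+3$), whereas your ``spread $\le 3$ versus spread $\ge 4$'' formulation handles all five values of $n$ uniformly; the underlying observations (that $G_0=a^4-1\neq 0$ gives a nonzero bottom term and $G_4=(x_1^2+x_2^2)^2$ gives a nonzero top term in $KG$) are identical.
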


\begin{proof} Each monomial in the expression of $\chi G$ has degree either $m+3$, $m+1$ or $n+1$. If $\chi$ is a vector field on $S^1\times S^2$ then $\chi G=KG=K((x_1^2 +x_2^2)^2 -2a^2(x_1^2 +x_2^2)+x_3^2 +x_4^2) +K(a^4-1)$ for some $K\in \RR[x_1,...,x_4]$.

If $m-1\leq n\leq m+2$ then $\chi G$ has degree at most $m+3$. So $K$ has the degree at most $m-1$. Hence $K(a^4 -1)$ has degree at most $m-1$. However, the degree of each monomial in the expression of $\chi G$ is at least $m$. Hence $K$ must be zero.

If $n=m+3$ then $\chi G$ has degree $m+4$. So $K$ has degree $m$. Hence $K(a^4 -1)$ has degree $m$. However, the degree of each monomial in the expression of $\chi G$ is at least $m+1$. Hence $K$ must be zero. 
Therefore, $G$ must be a first integral of $\chi$, since in each case above $K=0$.
\end{proof}
\begin{theorem}
\label{type-n-s12}
Let $\chi=(P_1,...,P_4)$ be a Type-$n$ vector field in $\RR^4$. Then $\chi$ is a Type-$n$ vector field on $S^1\times S^2$ if and only if  there exists $A,B\in \RR[x_1,...,x_4]$ such that
\begin{equation}\label{type-n-form-s12}
P_1=Ax_2,P_2=-Ax_1, P_3=Bx_4,P_4=-Bx_3.
\end{equation}
\end{theorem}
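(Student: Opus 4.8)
The plan is to reduce the whole statement to two scalar identities and then invoke the two earlier lemmas. Writing $Q := P_1 x_1 + P_2 x_2$ and $R := P_3 x_3 + P_4 x_4$, I note that both are homogeneous of degree $n+1$ because each $P_i$ is homogeneous of degree $n$. The first observation is that the asserted form \eqref{type-n-form-s12} is equivalent to the pair of conditions $Q = 0$ and $R = 0$: the forms clearly force $Q = A x_2 x_1 - A x_1 x_2 = 0$ and $R = B x_4 x_3 - B x_3 x_4 = 0$, while conversely $Q = 0$ together with $\gcd(x_1, x_2) = 1$ lets me apply Lemma \ref{polynomial-ch-s12} (with $R_1 = x_1$, $R_2 = x_2$) to obtain $P_1 = A x_2$, $P_2 = -A x_1$, and similarly $R = 0$ yields $P_3 = B x_4$, $P_4 = -B x_3$. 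So everything comes down to showing that $\chi$ lies on $S^1 \times S^2$ exactly when $Q = R = 0$.

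The ``if'' direction is immediate: if the $P_i$ are as in \eqref{type-n-form-s12}, then $Q = R = 0$, so the left-hand side of \eqref{vectorfield-s12} vanishes identically and \eqref{vectorfield-s12} holds with cofactor $K = 0$. Thus $\chi$ is a vector field on $S^1 \times S^2$.

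For the ``only if'' direction I would exploit Lemma \ref{deg-m-n-s12}. A Type-$n$ field has $\deg(P_1) = \deg(P_2) = \deg(P_3) = \deg(P_4) = n$, so its degrees satisfy the hypothesis of that lemma with $m = n$ (the inequality $m - 1 \le n \le m + 3$ becomes the trivial $n - 1 \le n \le n + 3$). Hence the polynomial $G = (x_1^2 + x_2^2 - a^2)^2 + x_3^2 + x_4^2 - 1$ is a first integral, i.e.\ $\chi G = 0$, which is precisely \eqref{vectorfield-s12} with $K = 0$:
\begin{equation*}
4(x_1^2 + x_2^2 - a^2)\,Q + 2R = 0 .
\end{equation*}
I would then split this identity into its homogeneous components. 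The only degree-$(n+3)$ term is $4(x_1^2 + x_2^2)\,Q$, so it must vanish on its own; since $\RR[x_1,\dots,x_4]$ is a domain and $x_1^2 + x_2^2 \ne 0$, this forces $Q = 0$. Substituting back, the degree-$(n+1)$ part reduces to $2R = 0$, so $R = 0$. Applying Lemma \ref{polynomial-ch-s12} twice, as explained above, then delivers the $A, B$ of \eqref{type-n-form-s12}.

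There is no serious obstacle here: Lemma \ref{deg-m-n-s12} does the heavy lifting by forcing the cofactor to be zero, after which the statement collapses onto the elementary factorization in Lemma \ref{polynomial-ch-s12}. The only points needing a moment's attention are the degree bookkeeping in $\chi G$ (checking that $4(x_1^2+x_2^2)Q$ is genuinely the sole top-degree contribution, so that it vanishes by itself) and the routine remark that the $A, B$ produced are automatically homogeneous of degree $n - 1$, so that the representation \eqref{type-n-form-s12} is consistent with $\chi$ being Type-$n$.
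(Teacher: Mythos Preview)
Your proposal is correct and follows essentially the same route as the paper's proof: invoke Lemma \ref{deg-m-n-s12} to kill the cofactor, separate the identity $4(x_1^2+x_2^2-a^2)Q+2R=0$ into its homogeneous parts to obtain $Q=0$ and $R=0$, and then apply Lemma \ref{polynomial-ch-s12} to each pair. Your introduction of the abbreviations $Q$ and $R$ and your explicit remark that $A,B$ come out homogeneous of degree $n-1$ are cosmetic additions, not a different strategy.
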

\begin{proof}
By Lemma \ref{deg-m-n-s12}, any Type-$n$ vector field satisfies the following.
\begin{equation*}\label{on-s12}
\begin{split}
     & 4(x_1^2+x_2^2-a^2)(P_1x_1 + P_2x_2)+2(P_3x_3 + P_4x_4)=0\\
     \implies& 4(x_1^2+x_2^2)(P_1x_1 + P_2x_2)-4a^2(P_1x_1 + P_2x_2)+2(P_3x_3 + P_4x_4)=0.
\end{split}
     \end{equation*}
     Comparing the same degree parts in the above equation, we get $4(x_1^2+x_2^2)(P_1x_1 + P_2x_2)=0$ and $-4a^2(P_1x_1 + P_2x_2)+2(P_3x_3 + P_4x_4)=0$. Then, $P_1x_1 + P_2x_2=0$ since $x_1^2+x_2^2\neq 0$. Hence $P_3x_3+P_4x_4=0$.
     Now by Lemma \ref{polynomial-ch-s12}, $P_1=Ax_2,P_2=-Ax_1,P_3=Bx_4$ and $P_4=-Bx_3$ for some $A,B\in \RR[x_1,...,x_4]$.

If $P_1,...,P_4$ are given by \eqref{type-n-form-s12}, then they satisfy \eqref{vectorfield-s12}. So, the converse part follows.
\end{proof}
\begin{remark}
    Similar to the above proof, using the argument of separating the same degree terms of $\chi G$, it can be proved that if $\chi=(P_1,...,P_4)$ is a polynomial vector field in $\RR^4$ satisfying the hypothesis of Lemma \ref{deg-m-n-s12}, it will be a vector field on $S^1\times S^2$ if and only if there exists $A,B\in \RR[x_1,...,x_4]$ such that 
    $$P_1=Ax_2,P_2=-Ax_1, P_3=Bx_4,P_4=-Bx_3.$$
\end{remark}
\begin{proposition}
There exists no Pseudo Type-$n$ vector field on $S^1\times S^2$.
\end{proposition}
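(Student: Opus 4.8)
The plan is to adapt the degree-counting argument already used in Lemma \ref{deg-m-n-s12} and Theorem \ref{type-n-s12}: first force the cofactor $K$ to vanish, then extract from the resulting identity that $P_3=0$, which contradicts $P_3$ being a nonzero homogeneous polynomial of degree $n$. So I would assume for contradiction that $\chi=(P_1,P_2,P_3,0)$ is a Pseudo Type-$n$ vector field on $S^1\times S^2$, meaning $P_1,P_2,P_3$ are nonzero and homogeneous of degree $n$. Setting $G=(x_1^2+x_2^2-a^2)^2+x_3^2+x_4^2-1$, the defining condition \eqref{vectorfield-s12} with $P_4=0$ reads
$$4(x_1^2+x_2^2-a^2)(P_1x_1+P_2x_2)+2P_3x_3=KG$$
for some $K\in\RR[x_1,\dots,x_4]$. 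The first thing I would record is that the left-hand side $\chi G$ splits into exactly two homogeneous pieces: a degree $n+3$ part $4(x_1^2+x_2^2)(P_1x_1+P_2x_2)$ and a degree $n+1$ part $-4a^2(P_1x_1+P_2x_2)+2P_3x_3$.

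Next I would show $K=0$. Suppose not. Writing $G=(x_1^2+x_2^2)^2-2a^2(x_1^2+x_2^2)+x_3^2+x_4^2+(a^4-1)$, the highest-degree part of $KG$ is the non-cancelling term $K^{(\deg K)}(x_1^2+x_2^2)^2$, so comparing top degrees with $\chi G$ forces $\deg K=n-1$. Then the lowest-degree homogeneous part of $KG$ is $(a^4-1)$ times the lowest homogeneous component of $K$, a nonzero polynomial of degree at most $n-1$, the nonvanishing being guaranteed by $a>1$, which makes $a^4-1\neq 0$. But $\chi G$ has no monomials of degree below $n+1$, so this low-degree piece has nothing to cancel against, a contradiction; hence $K=0$.

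Once $K=0$, the identity becomes $4(x_1^2+x_2^2-a^2)(P_1x_1+P_2x_2)+2P_3x_3=0$, and I would separate it by degree: the degree $n+3$ part yields $P_1x_1+P_2x_2=0$, since $x_1^2+x_2^2\neq 0$, and substituting this into the degree $n+1$ part leaves $2P_3x_3=0$, whence $P_3=0$. This contradicts the hypothesis that $P_3$ is a nonzero homogeneous polynomial of degree $n$, so no Pseudo Type-$n$ vector field on $S^1\times S^2$ exists. The only step needing care is the degree bookkeeping that pins down $\deg K=n-1$ and then isolates the nonvanishing degree-$\le n-1$ component of $KG$; the rest is the routine degree separation used repeatedly above, with the essential input $a^4-1\neq 0$ being exactly what collapses the cofactor.
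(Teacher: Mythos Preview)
Your proof is correct and follows essentially the same two-step strategy as the paper: first kill the cofactor $K$ by degree counting (the left side has no terms of degree below $n+1$, while $KG$ would contribute a nonzero piece of degree at most $n-1$), then extract a contradiction from the resulting identity. The only difference is in that final extraction: the paper observes directly that $4(x_1^2+x_2^2-a^2)(P_1x_1+P_2x_2)+2P_3x_3=0$ forces $x_1^2+x_2^2-a^2$ to divide $P_3$, which is impossible for a nonzero homogeneous polynomial, whereas you separate by homogeneous degree to get $P_1x_1+P_2x_2=0$ first and then $P_3=0$; both conclusions are equivalent and the arguments are minor variants of one another. One small imprecision: your claim that $\deg K=n-1$ should read $\deg K\le n-1$ (the degree-$(n+3)$ part of $\chi G$ could vanish if $P_1x_1+P_2x_2=0$), but this does not affect the argument since only the upper bound is used.
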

\begin{proof}
Let $\chi=(P_1, P_2, P_3, 0)$ be a Pseudo Type-$n$ vector field on $S^1\times S^2$. Thus,
$$4(x_1^2+x_2^2-a^2)(P_1x_1 + P_2x_2)+2P_3x_3=K((x_1^2+x_2^2-a^2)^2+x_3^2+x_4^2-1)$$
for some $K \in \RR[x_1, ..., x_4]$ with $\deg K \leq n-1$.
So, $K=0$, since the degree of each term on the left side is greater than $n$. Then, $4(x_1^2+x_2^2-a^2)(P_1x_1 + P_2x_2)+2P_3x_3=0$. Thus, $x_1^2+x_2^2-a^2$ is a factor of $P_3$. This is not possible, as $P_3$ is homogeneous and $a \neq 0$. 

\end{proof}
\begin{proposition}
    Any Type-$1$ vector field on $S^1\times S^2$ is Hamiltonian.
\end{proposition}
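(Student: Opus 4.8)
The plan is to reduce the statement to the explicit normal form already proved and then simply display a Hamiltonian. First I would observe that a Type-$1$ vector field on $S^1\times S^2$ is in particular a linear vector field on $S^1\times S^2$ whose components carry no constant term, so the classification already in hand applies directly. Hence Corollary \ref{cor:linear-s12} furnishes scalars $\alpha,\beta\in\RR$ with
\begin{equation*}
P_1=\alpha x_2,\qquad P_2=-\alpha x_1,\qquad P_3=\beta x_4,\qquad P_4=-\beta x_3.
\end{equation*}
This rigid form—a pair of planar rotations in the $(x_1,x_2)$- and $(x_3,x_4)$-planes—is precisely what the Hamiltonian pattern wants.

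Next I would exhibit the Hamiltonian explicitly rather than solve for it. Set
\begin{equation*}
H=-\tfrac{1}{2}\bigl(\alpha(x_1^2+x_2^2)+\beta(x_3^2+x_4^2)\bigr),
\end{equation*}
and then verify the four defining identities of a Hamiltonian vector field from the definition in Section \ref{sec:prel}. Computing the partial derivatives gives $-\partial H/\partial x_2=\alpha x_2=P_1$, $\partial H/\partial x_1=-\alpha x_1=P_2$, $-\partial H/\partial x_4=\beta x_4=P_3$, and $\partial H/\partial x_3=-\beta x_3=P_4$. Thus $\chi$ is Hamiltonian with Hamiltonian $H$, which completes the argument.

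The main obstacle here is essentially nil: once the normal form of Corollary \ref{cor:linear-s12} is invoked, the statement collapses to a one-line verification, and the only conceptual content is that the rotational normal form forced by invariance of $S^1\times S^2$ coincides with the gradient-symplectic pattern $P_1=-\partial H/\partial x_2$, $P_2=\partial H/\partial x_1$, $P_3=-\partial H/\partial x_4$, $P_4=\partial H/\partial x_3$. If one preferred not to cite the corollary, the alternative would be to solve the system of four partial differential equations $P_i=\pm\partial H/\partial x_j$ by successive integration (as in the proof of Theorem \ref{quadratic-hamiltonian}), but this is strictly more work and yields the same $H$. I would also remark that $H$ is, up to sign, a weighted sum of squares, consistent with the flow being a product of two uniform rotations; so every Type-$1$ vector field on $S^1\times S^2$ is not merely Hamiltonian but admits this concrete quadratic Hamiltonian.
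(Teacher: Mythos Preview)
Your proof is correct and follows essentially the same approach as the paper: obtain the normal form $P_1=\alpha x_2$, $P_2=-\alpha x_1$, $P_3=\beta x_4$, $P_4=-\beta x_3$ and then exhibit the quadratic Hamiltonian $H=-\tfrac{1}{2}\bigl(\alpha(x_1^2+x_2^2)+\beta(x_3^2+x_4^2)\bigr)$. The only cosmetic difference is that the paper cites Theorem~\ref{type-n-s12} for the normal form while you cite Corollary~\ref{cor:linear-s12}; both yield the same conclusion.
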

\begin{proof}
    Suppose $\chi$ is a Type-1 vector field on $S^1\times S^2$. Then, by Theorem \eqref{type-n-s12}, $$P_1=Ax_2,P_2=-Ax_1,P_3=Bx_4,P_4=-Bx_3\quad \text{for some } A,B\in \RR.$$ One can check that $H=-\frac{A}{2}(x_1^2+x_2^2)-\frac{B}{2}(x_3^2+x_4^2)$ is a Hamiltonian for $\chi$.
\end{proof}
\begin{theorem}\label{thm:degree-relation}
    Let $P_i\in \mathbb{R}[x_1,...,x_4]$ be non-zero homogeneous polynomials such that ${\deg(P_i)=m_i}$ for $1\leq i \leq 4$. Assume that $G=(x_1^2+x_2^2-a^2)^2+x_3^2+x_4^2-1$ is a first integral of $\chi=(P_1,...,P_4)$. Then the following holds.
    \begin{enumerate}[(a)]
        \item If $m_1\neq m_2$ then $\max(m_1,m_2)=\min(m_1,m_2)+2$ and 
        
        $\{m_3,m_4\}=\{\max(m_1,m_2)+2,\min(m_1,m_2)\}$.
        \item If $m_1=m_2$ then either $m_3=m_4$ or $\{m_3,m_4\}=\{m_1,m_1+2\}$
    \end{enumerate}
\end{theorem}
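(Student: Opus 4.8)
The plan is to unwind the hypothesis that $G$ is a first integral into the identity $\chi G=0$ and then read off the degree constraints by comparing homogeneous components. Since each $\partial G/\partial x_i$ is explicit, one computes
\[
\chi G = 4(x_1^2+x_2^2-a^2)(P_1x_1+P_2x_2)+2(P_3x_3+P_4x_4).
\]
Writing $S=P_1x_1+P_2x_2$ and $T=P_3x_3+P_4x_4$, the condition $\chi G=0$ becomes $2T=-4(x_1^2+x_2^2-a^2)S$. Expanding, $\chi G$ splits into six homogeneous pieces: $4(x_1^2+x_2^2)P_1x_1$ and $-4a^2P_1x_1$ of degrees $m_1+3$ and $m_1+1$; the analogous pair for $P_2x_2$ of degrees $m_2+3$ and $m_2+1$; and $2P_3x_3$, $2P_4x_4$ of degrees $m_3+1$ and $m_4+1$. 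All six are nonzero because every $P_i\neq0$ and $a\neq0$. The guiding principle throughout is that in a vanishing sum of homogeneous polynomials the terms of each fixed degree must already cancel, so no single nonzero piece may be the unique one of its degree.

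For part (a) I would assume without loss of generality (the set-up is symmetric under $1\leftrightarrow2$) that $m_1<m_2$. Among the four $S$-pieces the term $4(x_1^2+x_2^2)P_2x_2$ is then the strict maximum in degree ($m_2+3$) and $-4a^2P_1x_1$ the strict minimum ($m_1+1$); only the two $T$-pieces can match these. A short argument rules out the degenerate possibility that both $T$-pieces coincide at the top (or at the bottom), because that would force $T=0$ and hence $S=0$, which by Lemma \ref{polynomial-ch-s12} contradicts $m_1\neq m_2$. Consequently $\max(m_3,m_4)=m_2+2$ and $\min(m_3,m_4)=m_1$, giving $\{m_3,m_4\}=\{\max(m_1,m_2)+2,\min(m_1,m_2)\}$. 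After relabelling so that $m_3=m_2+2$ and $m_4=m_1$, the two still-unmatched pieces $4(x_1^2+x_2^2)P_1x_1$ (degree $m_1+3$) and $-4a^2P_2x_2$ (degree $m_2+1$) have no remaining partner, so they must cancel each other; this is possible only if their degrees agree, i.e. $m_2=m_1+2$, which is the remaining assertion.

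For part (b), with $m_1=m_2=m$, I would branch on whether $S=P_1x_1+P_2x_2$ vanishes. If $S=0$, Lemma \ref{polynomial-ch-s12} yields $P_1=Ax_2$, $P_2=-Ax_1$; then $\chi G=0$ forces $T=0$, so again by the Lemma $P_3=Bx_4$, $P_4=-Bx_3$, whence $m_3=m_4$. If $S\neq0$ it is homogeneous of degree $m+1$, and the two pieces $4(x_1^2+x_2^2)S$ and $-4a^2S$ are nonzero of degrees $m+3$ and $m+1$. The top piece can only be cancelled by a $T$-piece, forcing one of $m_3,m_4$ to equal $m+2$; the bottom piece likewise forces the other to equal $m$; the coincidences $m_3=m_4=m+2$ or $m_3=m_4=m$ are excluded since each would leave $-4a^2S$ or $4(x_1^2+x_2^2)S$ uncancelled. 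Hence $\{m_3,m_4\}=\{m,m+2\}=\{m_1,m_1+2\}$, completing the dichotomy.

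The essential difficulty is purely combinatorial: organising the six homogeneous terms so that the admissible cancellation patterns are exactly the claimed ones. The only subtlety is excluding the degenerate degree-coincidences, and in every instance this is achieved by observing that such a coincidence collapses $T$ (or $S$) to zero and then invoking Lemma \ref{polynomial-ch-s12} together with $a\neq0$ and $P_i\neq0$.
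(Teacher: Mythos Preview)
Your approach is the same as the paper's: compute $\chi G$, split it into the six nonzero homogeneous summands, and read off the degree constraints from the requirement that each degree-level must already cancel. One small correction in part (a): the degenerate case $m_3=m_4=m_2+2$ does not force $T=0$ as you write; rather, it simply leaves the degree-$(m_1+1)$ piece $-4a^2P_1x_1$ with no partner, so $P_1=0$ directly---hence the exclusion still works and the rest of your argument goes through.
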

\begin{proof}
Since $G$ is a first integral of $\chi$, $\chi G=0$, i.e., 
\begin{equation}\label{first-integral-s12}
4(x_1^2+x_2^2-a^2)(P_1x_1+P_2x_2)+2(P_3x_3+P_4x_4)=0.
\end{equation}
\begin{enumerate}[(a)]
    \item Since $m_1\neq m_2$, $P_1x_1+P_2x_2$ is a non-zero polynomial. Assume that $m_1<m_2$. So either of $m_3,m_4$ should be $m_2+2$. Without loss of generality, suppose $m_3=m_2+2$. We rewrite $\chi G$ by separating the same degree homogeneous parts as follows. $$\chi G=(4(x_1^2+x_2^2)P_2x_2+2P_3x_3)+4(x_1^2+x_2^2)P_1x_1-4a^2P_1x_1-4a^2P_2x_2+2P_4x_4.$$ Notice that
    \begin{equation*}
    \begin{split}
        \deg(4(x_1^2+x_2^2)P_2x_2+2P_3x_3)=m_2+3, \deg(((x_1^2+x_2^2)P_1x_1)=m_1+3 ,\deg(P_1x_1)=m_1+1,\\
        \deg(P_2x_2)=m_2+1 \text{ and }\deg(P_4x_4)=m_4+1.
            \end{split}
    \end{equation*}
    In the next step, we will match the degrees of the terms in $\chi G=0$ under certain conditions.
\begin{itemize}
    \item \underline{$m_4=m_2+2$:} Then there are no term of degree $m_1+1$ in $\chi G$ except $P_1x_1$. Hence $P_1=0$, which is not possible.
    \item \underline{$m_4>m_2+2$:} Then every term of $\chi G$ has strictly lower degree than $P_4x_4$. Hence $P_4=0$, which is not possible.
    \item \underline{$m_4<m_2+2:$} Then $m_4\in \{m_1,m_2,m_1+2\}$. If $m_4=m_1$ then $m_1+3$ must be $m_2+1$, i.e., $m_2=m_1+2$. If $m_4=m_2$ or $m_1+2$, then $P_1x_1=0$ since $P_1x_1$ will be the only lowest degree term in $\chi G$. Hence $P_1=0$, which is not possible.
\end{itemize}
So, in this case, $m_2=m_1+2, m_3=m_2+2, m_4=m_1$.
\item Now consider $m_1=m_2$. If $P_1x_1+P_2x_2=0$ then $P_3y+P_4z=0$ from \eqref{first-integral-s12}, which implies $m_3=m_4$.
\par Suppose $P_1x_1+P_2x_2\neq 0$. We rewrite $\chi G$ by separating same degree homogeneous parts as follows. $$\chi G=4(x_1^2+x_2^2)(P_1x_1+P_2x_2)-4a^2(P_1x_1+P_2x_2)+2P_3x_3+2P_4x_4.$$
Notice that
\begin{equation*}
\begin{split}
\deg (4(x_1^2+x_2^2)(P_1x_1+P_2x_2)=m_1+3, \deg(-4a^2(P_1x_1+P_2x_2))=m_1+1,\\ \deg(2P_3x_3)=m_3+1, \deg(2P_4x_4)=m_4+1.
\end{split}
\end{equation*}
Observe that $m_1+3$ must be either $m_3+1$ or $m_4+1$. Otherwise, from $\chi G=0$, we obtain $P_1x_1+P_2x_2=0$, which contradicts our assumption. Without loss of generality, suppose $m_1+3=m_3+1$. Then $m_1+1=m_4+1$, since $P_1x_1+P_2x_2$ and $P_4$ are nonzero.
\end{enumerate}
\end{proof}

\begin{theorem}\label{thm:2-first-integral-s12}
    A Type-$n$ vector field on $S^1\times S^2$ has at least two independent first integrals.
\end{theorem}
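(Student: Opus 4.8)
The plan is to use the explicit description of Type-$n$ vector fields on $S^1 \times S^2$ supplied by Theorem \ref{type-n-s12} and then exhibit two concrete annihilated functions that are functionally independent. By Theorem \ref{type-n-s12}, any Type-$n$ vector field $\chi$ on $S^1 \times S^2$ has the form $P_1 = Ax_2$, $P_2 = -Ax_1$, $P_3 = Bx_4$, $P_4 = -Bx_3$ for some homogeneous $A, B \in \RR[x_1,\dots,x_4]$ (necessarily of degree $n-1$). The block-rotational structure of this field---acting as a rotation in the $(x_1,x_2)$-plane with angular factor $A$ and in the $(x_3,x_4)$-plane with angular factor $B$---makes the squared radii $H_1 = x_1^2 + x_2^2$ and $H_2 = x_3^2 + x_4^2$ the natural candidates for first integrals.

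Next I would verify directly that each candidate is a first integral using the criterion $\chi H = 0$. A one-line computation gives $\chi H_1 = 2x_1 P_1 + 2x_2 P_2 = 2A x_1 x_2 - 2A x_1 x_2 = 0$ and $\chi H_2 = 2x_3 P_3 + 2x_4 P_4 = 2B x_3 x_4 - 2B x_3 x_4 = 0$, so both $H_1$ and $H_2$ are first integrals of $\chi$.

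It remains to confirm independence, which is the only point requiring a little care: producing two first integrals is not enough, one must check they are not functionally related. Here the gradients are $\nabla H_1 = (2x_1, 2x_2, 0, 0)$ and $\nabla H_2 = (0, 0, 2x_3, 2x_4)$, which are linearly independent at every point of the dense open set where $(x_1,x_2) \neq (0,0)$ and $(x_3,x_4) \neq (0,0)$. In particular this holds on a dense open subset of $S^1 \times S^2$, where moreover $x_1^2 + x_2^2 \geq a^2 - 1 > 0$ is forced by the defining equation. Since $H_1$ depends only on $x_1, x_2$ and $H_2$ only on $x_3, x_4$, no nontrivial relation $\Phi(H_1,H_2)=0$ can hold, so the two first integrals are independent.

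I do not anticipate a genuine obstacle: the characterization in Theorem \ref{type-n-s12} carries essentially all of the difficulty, reducing the problem to the elementary observations above. The only subtlety worth flagging is that the related candidate $G = (x_1^2+x_2^2-a^2)^2 + x_3^2 + x_4^2 - 1$, although also a first integral by Lemma \ref{deg-m-n-s12}, equals $(H_1 - a^2)^2 + H_2 - 1$ and is therefore functionally dependent on $H_1, H_2$; this is why I would choose $H_1$ and $H_2$ themselves rather than $G$ to witness the two required independent first integrals.
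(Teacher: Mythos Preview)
Your proof is correct and reaches the same pair of first integrals $x_1^2+x_2^2$ and $x_3^2+x_4^2$ as the paper. The only difference is presentational: the paper derives these integrals via the method of characteristics (citing Sneddon), whereas you guess them from the block-rotational form and verify $\chi H_i=0$ directly; your route is slightly more elementary and also makes the independence check explicit, which the paper leaves implicit.
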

\begin{proof}
    Suppose $f$ is a first integral of the Type-$n$ vector field \eqref{type-n-form-s12} on $S^1\times S^2$. Then $\sum\limits_{i=1}^4 P_i \frac{\partial f}{\partial x_i}=0$. This implies
    \begin{equation*}
        Ax_2\frac{\partial f}{\partial x_1}-Ax_1\frac{\partial f}{\partial x_2}+Bx_4\frac{\partial f}{\partial x_3}-Bx_3\frac{\partial f}{\partial x_4}=0.
\end{equation*}
By \cite[Theorem 3(Page 53)]{Sn13}, solutions of the above can be given in terms of the solutions of the equations
$$\frac{dx_1}{Ax_2}=\frac{dx_2}{-Ax_1}=\frac{dx_3}{Bx_4}=\frac{dx_4}{-Bx_3}=\frac{df}{0}=\frac{x_1 dx_1+x_2 dx_2}{0}=\frac{x_3dx_3+x_4dx_4}{0}.$$
Hence, we obtain two independent first integrals $x_1^2+x_2^2$ and $x_3^2+x_4^2$.
\end{proof}
\begin{corollary}
    Any Type-$n$ Hamiltonian vector field on $S^1\times S^2$ is integrable in the Liouville sense.
\end{corollary}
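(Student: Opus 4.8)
The plan is to combine the two explicit first integrals produced in Theorem \ref{thm:2-first-integral-s12} with the Hamiltonian structure. Since $\chi$ is assumed to be a Hamiltonian vector field on $S^1\times S^2$, let $H$ be its Hamiltonian; by the observation following the definition of Hamiltonian vector fields, $H$ is itself a first integral of $\chi$. Being of Type-$n$ on $S^1\times S^2$, the vector field has the form $P_1=Ax_2$, $P_2=-Ax_1$, $P_3=Bx_4$, $P_4=-Bx_3$ by Theorem \ref{type-n-s12}, and Theorem \ref{thm:2-first-integral-s12} supplies two independent first integrals $I_1:=x_1^2+x_2^2$ and $I_2:=x_3^2+x_4^2$. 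By the definition of Liouville integrability, it suffices to exhibit one first integral that is independent of $H$.

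The key step is the claim that at least one of $I_1,I_2$ is independent of $H$. I would argue by contradiction: suppose both $I_1$ and $I_2$ were functionally dependent on $H$. Functional dependence of $I_j$ and $H$ forces the gradients $\nabla I_j$ and $\nabla H$ to be linearly dependent on a dense open subset of $\RR^4$. Since $H$ is non-constant and analytic, $\nabla H$ is nonzero on a dense open set, and there $\nabla I_1$ and $\nabla I_2$ would both be scalar multiples of $\nabla H$, hence multiples of each other, contradicting the independence of $I_1$ and $I_2$ guaranteed by Theorem \ref{thm:2-first-integral-s12}. This is transparent from the explicit gradients $\nabla I_1=(2x_1,2x_2,0,0)$ and $\nabla I_2=(0,0,2x_3,2x_4)$, which are linearly independent wherever $(x_1,x_2)\neq(0,0)$ and $(x_3,x_4)\neq(0,0)$.

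Thus at least one of $I_1,I_2$, say $F$, is independent of $H$, and $F$ is a first integral of the Hamiltonian system with Hamiltonian $H$ (equivalently $\{H,F\}=0$, by the remark following the definition of Liouville integrability). Hence the system meets the defining condition and is integrable in the Liouville sense.

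The only delicate point is the precise meaning of independence, so I would make explicit that ``independent first integrals'' is read through linear independence of gradients on a dense open set; this, together with the non-vanishing of $\nabla H$ on a dense open set, validates the transitivity-of-parallelism argument. Beyond that, the proof is immediate, since the two explicit first integrals $I_1,I_2$ from Theorem \ref{thm:2-first-integral-s12} can never both be functionally tied to a single $H$ without collapsing into mutual dependence.
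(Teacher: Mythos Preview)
Your proof is correct and follows the same approach the paper (implicitly) takes: the corollary is stated immediately after Theorem~\ref{thm:2-first-integral-s12} with no separate proof, so the intended argument is exactly that the two independent first integrals $x_1^2+x_2^2$ and $x_3^2+x_4^2$ cannot both be dependent on $H$. You have simply made explicit the pigeonhole step that the paper leaves to the reader.
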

\section{Polynomial vector fields on $S^2\times S^1$}\label{sec:pvf_s2s1}
In this section, we study and characterize polynomial vector fields on $S^2\times S^1$. Recall the presentation of $S^2\times S^1\subset \RR^4$ from \eqref{eq:s21}. A vector field $\chi=(P_1,...,P_4)$ in $\RR^4$ is a vector field on $S^2\times S^1$ if
\begin{equation}\label{vectorfield-s21}
    4(x_1^2+x_2^2+x_3^2-b^2)(P_1x_1 + P_2x_2 +P_3x_3)+2P_4x_4=K((x_1^2+x_2^2+x_3^2-b^2)^2 + x_4^2 -1)
\end{equation}
for some $K\in \RR[x_1,...,x_4]$.
\begin{lemma}\label{polynomial-ch-s21}
    Let $n\in \NN$ and $Q_1,Q_2,Q_3\in \RR[x_1,...,x_4]$ be non-zero polynomials such that $Q_1x_i^n+Q_2x_j^n+Q_3x_k^n$ is the zero polynomial for distinct $i,j,k\in \{1,...,4\}$. Then $Q_1=Ax_j^n+Bx_k^n,Q_2=-Ax_i^n + Cx_k^n$ and $Q_3=-Bx_i^n-Cx_j^n$ for some polynomials $A, B, C.$
\end{lemma}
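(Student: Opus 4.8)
The plan is to reduce the three-term relation to the two-term statement of Lemma \ref{polynomial-ch-s12} by expanding in a single variable. First I would note that, since the polynomial ring and the assertion are symmetric under permuting $x_1,\dots,x_4$, we may relabel the variables so that $(i,j,k)=(1,2,3)$; the remaining variable $x_4$ plays no special role. The hypothesis then reads $Q_1 x_1^n + Q_2 x_2^n + Q_3 x_3^n = 0$, which I rewrite as $Q_1 x_1^n + Q_2 x_2^n = -Q_3 x_3^n$, exhibiting $x_3^n$ as a divisor of $Q_1 x_1^n + Q_2 x_2^n$.

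Next I would expand $Q_1$ and $Q_2$ as polynomials in $x_3$ with coefficients in $\RR[x_1,x_2,x_4]$, writing $Q_1 = \sum_{\ell\ge 0} Q_{1,\ell}\, x_3^{\ell}$ and $Q_2 = \sum_{\ell\ge 0} Q_{2,\ell}\, x_3^{\ell}$. Since every monomial of $-Q_3 x_3^n$ is divisible by $x_3^n$, comparing the coefficients of $x_3^{\ell}$ for $0\le \ell\le n-1$ forces $Q_{1,\ell}\, x_1^n + Q_{2,\ell}\, x_2^n = 0$ for each such $\ell$. Because $x_1$ and $x_2$ are distinct variables, $\gcd(x_1^n,x_2^n)=1$, so Lemma \ref{polynomial-ch-s12} applies to each of these relations and produces $A_\ell\in \RR[x_1,x_2,x_4]$ with $Q_{1,\ell}=A_\ell\, x_2^n$ and $Q_{2,\ell}=-A_\ell\, x_1^n$.

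I would then collect terms by setting $A:=\sum_{\ell=0}^{n-1} A_\ell\, x_3^{\ell}$, $B:=\sum_{\ell\ge n} Q_{1,\ell}\, x_3^{\ell-n}$ and $C:=\sum_{\ell\ge n} Q_{2,\ell}\, x_3^{\ell-n}$, all of which lie in $\RR[x_1,\dots,x_4]$. By construction $Q_1 = A x_2^n + B x_3^n$ and $Q_2 = -A x_1^n + C x_3^n$. Substituting these back into the hypothesis, the cross terms $\pm A x_1^n x_2^n$ cancel and one is left with $(B x_1^n + C x_2^n)x_3^n + Q_3 x_3^n = 0$; cancelling $x_3^n$ (legitimate, as $\RR[x_1,\dots,x_4]$ is an integral domain) gives $Q_3 = -B x_1^n - C x_2^n$, which is precisely the asserted form.

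All the computations here are routine, and the coprimality $\gcd(x_1^n,x_2^n)=1$ that drives the reduction is immediate; I therefore do not expect a genuine conceptual obstacle. The one point requiring care is the bookkeeping in the third paragraph, namely verifying that the low-order ($\ell<n$) and high-order ($\ell\ge n$) parts of $Q_1$ and $Q_2$ reassemble into honest polynomials $A,B,C$ and that the back-substitution produces exactly the cancellation described. (Note that the hypothesis that $Q_1,Q_2,Q_3$ are non-zero is not actually needed for this argument, though it holds in the intended application.)
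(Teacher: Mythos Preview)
Your argument is correct. The approach, however, differs from the paper's. The paper proceeds symmetrically: from $Q_1x_i^n=-(Q_2x_j^n+Q_3x_k^n)$ it observes that every monomial on the right is divisible by $x_j^n$ or $x_k^n$, hence (by coprimality with $x_i^n$) so is every monomial of $Q_1$, giving $Q_1=Ax_j^n+Bx_k^n$; the same reasoning yields analogous decompositions $Q_2=A'x_i^n+B'x_k^n$ and $Q_3=A''x_i^n+B''x_j^n$, and then a reconciliation step forces $A'=-A+Lx_k^n$, $A''=-B+L'x_j^n$ and finally $L'x_i^n+B''=-(Lx_i^n+B')$, which collapses to the stated form with $C=Lx_i^n+B'$. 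Your route instead singles out one variable, expands $Q_1,Q_2$ in powers of $x_3$, and feeds the low-order coefficients ($\ell<n$) directly into the two-term Lemma~\ref{polynomial-ch-s12}; the high-order tails become $B$ and $C$ by fiat, and a single back-substitution recovers $Q_3$. Your method is a bit more economical---it avoids the triple decomposition and the subsequent matching---and makes the reduction to Lemma~\ref{polynomial-ch-s12} explicit, whereas the paper's argument is more self-contained (it re-derives the needed divisibility each time rather than invoking the earlier lemma). Both are short and elementary; your observation that the non-vanishing hypothesis on the $Q_i$ is not actually used is also accurate.
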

\begin{proof}
The equation     
$Q_1x_i^n+Q_2x_j^n+Q_3x_k^n=0$ implies that $Q_1=\frac{Q_2x_j^n+Q_3x_k^n}{x_i^n}$. Hence we get $Q_1=Ax_j^n+Bx_k^n$ for some polynomials $A$ and $B$ since $x_i,x_j$ and $x_k$ are mutually prime and any monomial in $Q_2x_j^n+Q_3x_k^n$ has power of both $x_j$ and $x_k$ not less than $n$. Similarly,  $Q_2=A^{'}x_i^n+B^{'}x_k^n$ and $Q_3=A^{''}x_i^n+B^{''}x_j^n$.

    Now $Q_3x_k^n=-Q_1x_i^n-Q_2x_j^n=-Ax_i^n x_j^n-Bx_i^n x_k^n -A^{'}x_i^nx_j^n-B^{'}x_j^nx_k^n$. This implies $x_k^n$ divides $A+A^{'}$ and hence $A+A^{'}=Lx_k^n$ for some polynomial $L \in \RR[x_1, ..., x_4]$. So, $Q_2 =-Ax_i^n+(Lx_i^n+B^{'})x_k^n$.
    
    Again $Q_2x_j^n=-Q_1x_i^n-Q_3x_k^n=-Ax_i^nx_j^n-Bx_i^nx_k^n-A^{''}x_i^nx_k^n-B^{''}x_j^nx_k^n$. This implies that $x_j^n$ divides $A^{''}+B$ and hence $A^{''}+B=L^{'}x_j^n$ for some polynomial $L^{'}$. Hence, we have ${Q_3=-Bx_i^n+(L^{'}x_i^n+B^{''})x_j^n}$. Substituting these values of $Q_1,Q_2$ and $Q_3$ in the equation ${Q_1x_i^n+Q_2x_j^n+Q_3x_k^n=0}$, we get
\begin{equation*}\label{on-s12}
\begin{split}
     & (Ax_j^n + Bx_k^n)x_i^n +(-Ax_i^n+ (x_i^nL+B')x_k^n)x_j^n +(-Bx_i^n +(x_i^n L' +B'')x_j^n)x_k^n=0\\
     \implies& L^{'}x_i^n+B^{''}=-(Lx_i^n+B^{'})=0.
\end{split}
     \end{equation*}
 Hence, assuming $C := Lx_i^n+B^{'}$, we get our result.
\end{proof}
\begin{theorem}
\label{quadratic-thm-s21}
    Let $\chi=(P_1,...,P_4)$ be a quadratic vector field in $\RR^4$. Then $\chi$ is a quadratic vector field on $S^2\times S^1$ if and only if
    \begin{equation}\label{quadratic-ch-s21}
        \begin{split}
            P_1&=\frac{c}{4}x_1x_4+f x_2+g x_3,\\
            P_2&=\frac{c}{4}x_2x_4-f x_1+h x_3,\\
            P_3&=\frac{c}{4}x_3x_4-g x_1-h x_2, \text{ and}\\
            P_4&=\frac{c}{2}(-b^2(x_1^2+x_2^2+x_3^2)+x_4^2+b^4-1)
        \end{split}
    \end{equation}
    where $f,g,h$ are linear polynomials and $c\in \RR$. Moreover, $K=c x_4$ is the cofactor of $\chi$ for $S^2\times S^1$.
\end{theorem}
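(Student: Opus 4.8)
The plan is to run the same degree-by-degree comparison that proves Theorem~\ref{thm:quad_fv_s1s2}, but now using the three--term cancellation Lemma~\ref{polynomial-ch-s21} in place of the two--term Lemma~\ref{polynomial-ch-s12}, since the ``sphere'' factor of $S^2\times S^1$ involves the three coordinates $x_1,x_2,x_3$. Assume first that $\chi$ is a quadratic vector field on $S^2\times S^1$, so \eqref{vectorfield-s21} holds for some $K$. Evaluating both sides at the origin and using $b>1$ (hence $b^4-1\neq0$) shows $K$ has no constant term, and comparing top degrees forces $\deg K\le 1$, so $K=\sum_{j=1}^4 k_jx_j$. I write each $P_i=P_i^{(2)}+P_i^{(1)}+P_i^{(0)}$ as in \eqref{homogeneous-division} and abbreviate $S=x_1^2+x_2^2+x_3^2$ and $T=P_1x_1+P_2x_2+P_3x_3$.

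Comparing the degree $5$ homogeneous parts of \eqref{vectorfield-s21} gives $\sum_{i=1}^3 P_i^{(2)}x_i=\tfrac14 KS$, and the degree $4$ parts give $\sum_{i=1}^3 P_i^{(1)}x_i=0$; Lemma~\ref{polynomial-ch-s21} (with $n=1$ and $i,j,k=1,2,3$) then yields $P_1^{(1)}=Ax_2+Bx_3$, $P_2^{(1)}=-Ax_1+Cx_3$, $P_3^{(1)}=-Bx_1-Cx_2$ for constants $A,B,C$. Next I restrict to $x_4=0$, exactly as in the passage producing \eqref{x_3-0-x_4-s12-quad} and \eqref{x_3-0-x_4-s12-linear}: comparing the degree $3$ and degree $1$ parts of the restricted identity produces two relations of the form $4\sum_{i=1}^3 P_i^{(0)}x_i=-b^2(k_1x_1+k_2x_2+k_3x_3)$ and $-4b^2\sum_{i=1}^3 P_i^{(0)}x_i=(b^4-1)(k_1x_1+k_2x_2+k_3x_3)$, which together force $k_1=k_2=k_3=0$ and $P_1^{(0)}=P_2^{(0)}=P_3^{(0)}=0$. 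Setting $c:=k_4$, we have $K=cx_4$.

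With $K=cx_4$ the degree $5$ relation becomes $\sum_{i=1}^3\big(P_i^{(2)}-\tfrac{c}{4}x_4x_i\big)x_i=0$, so a second application of Lemma~\ref{polynomial-ch-s21} gives $P_1^{(2)}=\tfrac{c}{4}x_1x_4+A'x_2+B'x_3$, $P_2^{(2)}=\tfrac{c}{4}x_2x_4-A'x_1+C'x_3$, $P_3^{(2)}=\tfrac{c}{4}x_3x_4-B'x_1-C'x_2$ with $A',B',C'$ linear homogeneous. Writing $f=A+A'$, $g=B+B'$, $h=C+C'$ and assembling the homogeneous pieces recovers the stated forms of $P_1,P_2,P_3$. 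Finally $P_4$ is pinned down with no free parameter: the degree $1$ part of \eqref{vectorfield-s21} gives $P_4^{(0)}=\tfrac{(b^4-1)c}{2}$, the degree $2$ part gives $P_4^{(1)}=0$ (since $T^{(2)}=0$), and the degree $3$ part, after substituting $\sum_{i=1}^3 P_i^{(2)}x_i=\tfrac{c}{4}x_4S$, yields $P_4^{(2)}=\tfrac{c}{2}(-b^2S+x_4^2)$; together these give the claimed $P_4$.

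The converse is a direct substitution check that the functions in \eqref{quadratic-ch-s21} satisfy \eqref{vectorfield-s21} with cofactor $K=cx_4$. The one genuinely new feature compared with Theorem~\ref{thm:quad_fv_s1s2} --- and the step I expect to require the most care --- is the three--term cancellation: Lemma~\ref{polynomial-ch-s21} forces the antisymmetric $\mathfrak{so}(3)$-type coupling encoded by $f,g,h$ among $P_1,P_2,P_3$, whereas the single coordinate $x_4$ is paired only with the (non-polynomial) radial direction, which is why $P_4$ admits no analogous free antisymmetric term and is completely rigid.
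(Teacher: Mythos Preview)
Your proof is correct and follows essentially the same degree-by-degree comparison as the paper's own argument, using Lemma~\ref{polynomial-ch-s21} at the same two junctures (for the degree~$4$ and degree~$5$ identities) and the same $x_4=0$ restriction to kill $k_1,k_2,k_3$. The only differences are cosmetic: you make the bound $\deg K\le 1$ explicit by comparing top degrees, and you add the interpretive remark about the $\mathfrak{so}(3)$-type antisymmetric coupling in $f,g,h$, neither of which changes the logic.
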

\begin{proof}
Suppose $\chi$ is a quadratic vector field on $S^2\times S^1$. So it must satisfy
    \begin{dmath}\label{quadratic-s21}
        4(x_1^2+x_2^2+x_3^2-b^2)(\sum_{i=1}^3 P_ix_i)+2P_4x_4=(\sum\limits_{j=1}^4 k_jx_j)((x_1^2+x_2^2+x_3^2-b^2)^2 + x_4^2 -1)
    \end{dmath} since there is no constant term on the left side. We write 
    \begin{equation*}
    P_i=P_i^{(2)}+P_i^{(1)}+P_i^{(0)}
    \end{equation*}
    as in \eqref{homogeneous-division}. Comparing degree 5 terms in \eqref{quadratic-s21}, we get
    \begin{dmath*}
        4(x_1^2+x_2^2+x_3^2)(\sum_{i=1}^3 P_i^{(2)}x_i)=(\sum\limits_{j=1}^4 k_jx_j)(x_1^2+x_2^2+x_3)^2.
    \end{dmath*}
    Hence
    \begin{dmath}\label{deg-5-s21}
        \sum_{i=1}^3 P_i^{(2)}x_i=\frac{1}{4}(\sum\limits_{j=1}^4 k_jx_j)(x_1^2+x_2^2+x_3^2).
    \end{dmath}
    Also comparing degree $4$ terms in \eqref{quadratic-s21}, we get
    \begin{dmath*}
        4(x_1^2+x_2^2+x_3^2)(\sum_{i=1}^3 P_i^{(1)}x_i)=0.
    \end{dmath*}
    Hence $\sum_{i=1}^3 P_i^{(1)}x_i=0$. Then, by Lemma \ref{polynomial-ch-s21}, $P_1^{(1)}=\alpha x_2+\beta x_3, P_2^{(1)}=-\alpha x_1+\gamma x_3$ and $P_3^{(1)}=-\beta x_1-\gamma x_2$ for some $\alpha,\beta,\gamma \in \RR$.
    
    In particular, if $x_4=0$, then \eqref{quadratic-s21} gives the following.
    \begin{dmath}\label{x_4-0-s21}
         4(x_1^2+x_2^2+x_3^2-b^2)(\sum_{i=1}^3 P_i(x_1,x_2,x_3,0)x_i)=(\sum_{j=1}^3 k_jx_j)((x_1^2+x_2^2+x_3^2-b^2)^2 -1).
    \end{dmath}
    Comparing degree 3 terms in \eqref{x_4-0-s21},
\begin{dmath*}
    4(\sum_{i=1}^3 x_i^2)(\sum_{i=1}^3 P_i^{(0)}(x_1,x_2,x_3,0)x_i)-4b^2(\sum_{i=1}^3 P_i^{(2)}(x_1,x_2,x_3,0)x_i)=-2b^2(\sum_{j=1}^3 k_jx_j)(\sum_{i=1}^3 x_i^2).
\end{dmath*}
Then, for $x_4=0$, the above equation together with \eqref{deg-5-s21} gives the following.
\begin{dmath}
\label{x_4-0-s21-quad}
    4(\sum_{i=1}^3 P_i^{(0)}x_i)=-b^2(\sum_{j=1}^3 k_jx_j).
\end{dmath}
Also comparing degree 1 terms in \eqref{x_4-0-s21}, we get
\begin{dmath}\label{x_4-0-s21-linear}
    -4b^2(\sum_{i=1}^3 P_i^{(0)}x_i)=(b^4-1)(\sum_{j=1}^3 k_jx_j).
\end{dmath}
Now, from \eqref{x_4-0-s21-quad} and \eqref{x_4-0-s21-linear}, we obtain $k_1=k_2=k_3=0$. Hence $P_1^{(0)}=P_2^{(0)}=P_3^{(0)}=0$. Therefore, from \eqref{deg-5-s21} we get the following.
\begin{dmath}\label{deg2-homogeneous-s21}
\sum_{i=1}^3 P_i^{(2)}x_i=\frac{1}{4}k_4x_4(x_1^2+x_2^2+x_3^2).
\end{dmath}
Then, by Lemma \ref{polynomial-ch-s21}, $P_1^{(2)}=\frac{1}{4}k_4x_1x_4+Ax_2+Bx_3, P_2^{(2)}=\frac{1}{4}k_4x_2x_4-Ax_1+Cx_3$ and $ {P_3^{(2)}=\frac{1}{4}k_4x_3x_4-Bx_1-Cx_2}$ where $A,B,C$ are linear homogeneous polynomials.
So we have found all the homogeneous parts of $P_1,P_2$ and $P_3$. 

Let us now find $P_4$.
Comparing degree 1 terms in \eqref{quadratic-s21}, we get
$$P_4^{(0)}=\frac{b^4 -1}{2}k_4.$$
Comparing degree 2 terms in \eqref{quadratic-s21}, we get $P_4^{(1)}x_4=0$ which follows that $P_4^{(1)}=0$. Now, comparing degree 3 terms in \eqref{quadratic-s21}, we get
\begin{dmath*}
-4b^2(\sum_{i=1}^3 P_i^{(2)}x_i)+2P_4^{(2)}x_4=k_4x_4(-2b^2(x_1^2+x_2^2+x_3^2)+x_4^2).
\end{dmath*}
Then using \eqref{deg2-homogeneous-s21} in the above equation, we obtain
\begin{dmath*}
    P_4^{(2)}x_4=\frac{1}{2}k_4x_4(-b^2(x_1^2+x_2^2+x_3^2)+x_4^2).
\end{dmath*}
Hence $P_4^{(2)}=\frac{k_4}{2}(-b^2(x_1^2+x_2^2+x_3^2)+x_4^2).$
So, we have found all the homogeneous components of $P_4$. Renaming $k_4$ as $c$, we get our result.

If $P_1,...,P_4$ are given by \eqref{quadratic-ch-s21}, then they satisfy \eqref{vectorfield-s21}. Thus, the converse part is true.
\end{proof}
\begin{corollary}\label{linear-s21}
    $\chi=(P_1,...,P_4)$ is a linear vector field on $S^2\times S^1$ if and only if there exists $\alpha,\beta,\gamma\in \RR$ such that
    $$P_1=\alpha x_2+\beta x_3, P_2=-\alpha x_1+\gamma x_3, P_3=-\beta x_1 -\gamma x_2, \text{ and } P_4=0.$$
\end{corollary}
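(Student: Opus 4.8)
The plan is to deduce this directly from Theorem \ref{quadratic-thm-s21} by treating a linear vector field as a special case of a quadratic one. Since any polynomial of degree at most $1$ is in particular of degree at most $2$, a linear vector field $\chi=(P_1,\dots,P_4)$ on $S^2\times S^1$ is also a quadratic vector field on $S^2\times S^1$, and hence must already be of the form \eqref{quadratic-ch-s21}. The entire content of the corollary is then to determine which choices of the parameter $c\in\RR$ and the linear polynomials $f,g,h$ make every $P_i$ of degree at most $1$.

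The key step is to impose the degree-one condition component by component. First I would examine $P_4=\frac{c}{2}(-b^2(x_1^2+x_2^2+x_3^2)+x_4^2+b^4-1)$: its quadratic part is $\frac{c}{2}(-b^2(x_1^2+x_2^2+x_3^2)+x_4^2)$, which vanishes identically only when $c=0$, because $b\neq 0$. Setting $c=0$ immediately gives $P_4=0$ and reduces the remaining components to $P_1=fx_2+gx_3$, $P_2=-fx_1+hx_3$ and $P_3=-gx_1-hx_2$. Since $f,g,h$ are linear polynomials, each of the products $fx_2$, $gx_3$, $fx_1$, $hx_3$, $gx_1$, $hx_2$ has degree at most $1$ precisely when $f,g,h$ are constants; writing $\alpha=f$, $\beta=g$, $\gamma=h$ then yields exactly the asserted expressions for $P_1,P_2,P_3$, together with $P_4=0$.

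For the converse, one checks that any $\chi$ of the stated form is visibly linear and arises from \eqref{quadratic-ch-s21} by taking $c=0$ and $f,g,h$ equal to the constants $\alpha,\beta,\gamma$, so it is a (quadratic, hence) vector field on $S^2\times S^1$ with cofactor $K=cx_4=0$. There is no serious obstacle here: the argument is a routine specialization of Theorem \ref{quadratic-thm-s21}, entirely parallel to the proof of Corollary \ref{cor:linear-s12}, and the only point requiring a (trivial) justification is that $b\neq 0$ forces $c=0$ as soon as one demands $P_4$ be of degree at most $1$.
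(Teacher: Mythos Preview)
Your approach is exactly the paper's: specialize Theorem~\ref{quadratic-thm-s21} by imposing $\deg P_i\le 1$, and the deduction $c=0$ from $\deg P_4\le 1$ is correct. There is, however, a small slip in the next step. You assert that $P_1=fx_2+gx_3$, $P_2=-fx_1+hx_3$, $P_3=-gx_1-hx_2$ are all of degree at most $1$ ``precisely when $f,g,h$ are constants,'' arguing via the individual products $fx_2$, $gx_3$, etc. But it is the \emph{sums} that must be linear, and cancellation can occur: for any $\lambda\in\RR$, the choice $f=f_0+\lambda x_3$, $g=g_0-\lambda x_2$, $h=h_0+\lambda x_1$ yields linear $P_1,P_2,P_3$ even though $f,g,h$ are not constant.

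This does not damage the corollary. With those $f,g,h$ one computes $P_1=f_0x_2+g_0x_3$, $P_2=-f_0x_1+h_0x_3$, $P_3=-g_0x_1-h_0x_2$, which is precisely the claimed form with $\alpha=f_0$, $\beta=g_0$, $\gamma=h_0$; the ambiguity only reflects non-uniqueness of the parameters $(f,g,h)$ in \eqref{quadratic-ch-s21}, not of the vector field itself. The paper's one-line proof makes the same tacit identification (``$c=0$ and $f,g,h$ are constants''), so your argument is in the same spirit and the conclusion stands; just rephrase the step to say that the resulting linear vector field necessarily has the stated form, rather than that $f,g,h$ must themselves be constants.
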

Observe that if $P_1,P_2,P_3\neq 0$ then a linear vector field $\chi=(P_1,...,P_4)$ on $S^2\times S^1$ is a Pseudo Type-1 vector field.

\begin{proof}
    The vector field \eqref{quadratic-ch-s21} on $S^2\times S^1$ is a linear if and only if $c=0$ and $f,g,h$ are constants. Hence the result follows.
\end{proof}
\begin{corollary}\label{cor_lvvf_s2s1}
There is no Lotka-Volterra vector field on $S^2\times S^1$.
\end{corollary}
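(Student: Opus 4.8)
The final statement to prove is Corollary \ref{cor_lvvf_s2s1}: there is no Lotka-Volterra vector field on $S^2\times S^1$. The plan is to argue exactly as in the proof of Corollary \ref{cor:lv-s1s2}, exploiting the explicit characterization of quadratic vector fields on $S^2\times S^1$ from Theorem \ref{quadratic-thm-s21} together with the divisibility constraints imposed by the Lotka-Volterra (Kolmogorov) condition.

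First I would recall that a Lotka-Volterra vector field is in particular a quadratic Kolmogorov vector field, so each $P_i$ must be divisible by $x_i$ for $i\in\{1,\dots,4\}$, and since $\chi$ is quadratic it must have the form \eqref{quadratic-ch-s21}. I would then read off the required divisibilities one coordinate at a time. The condition $x_4 \mid P_4 = \frac{c}{2}(-b^2(x_1^2+x_2^2+x_3^2)+x_4^2+b^4-1)$ forces $c=0$, since $x_4$ cannot divide the constant term $\frac{c}{2}(b^4-1)$ nor the terms $-\frac{c}{2}b^2(x_1^2+x_2^2+x_3^2)$ unless $c=0$ (here $b>1$ so $b^4-1\neq 0$). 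With $c=0$ the first three components reduce to $P_1=fx_2+gx_3$, $P_2=-fx_1+hx_3$, $P_3=-gx_1-hx_2$.

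Next I would impose $x_1\mid P_1$, $x_2\mid P_2$, $x_3\mid P_3$ on these reduced expressions. From $x_1 \mid fx_2+gx_3$, since $x_1$ is coprime to $x_2$ and $x_3$, one deduces $x_1 \mid f$ and $x_1 \mid g$. Similarly $x_2 \mid P_2$ gives $x_2\mid f$ and $x_2\mid h$, and $x_3\mid P_3$ gives $x_3\mid g$ and $x_3\mid h$. Combining, $x_1x_2 \mid f$, but $f$ is a linear polynomial, so $f=0$; likewise $x_1x_3 \mid g$ forces $g=0$ and $x_2x_3\mid h$ forces $h=0$. Hence all of $P_1,P_2,P_3$ vanish as well, so $\chi$ is the zero vector field, which is not a genuine Lotka-Volterra system. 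This contradiction establishes the nonexistence.

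I do not anticipate any serious obstacle here; the argument is a direct structural consequence of Theorem \ref{quadratic-thm-s21} and is formally parallel to Corollary \ref{cor:lv-s1s2}. The only point requiring a little care is the very first step, where I must use $b>1$ (so that $b^4-1\neq 0$) to conclude $c=0$ cleanly from the $x_4$-divisibility of $P_4$; after that, the coprimality of the distinct coordinate variables does all the work, and the linearity of $f,g,h$ upgrades "divisible by a product of two coordinates" to "identically zero."
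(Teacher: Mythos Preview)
Your overall strategy---reduce to the form \eqref{quadratic-ch-s21}, force $c=0$ from $x_4\mid P_4$, then kill $f,g,h$ via the remaining divisibility conditions---is the right one and matches the paper's. However, there is a genuine gap in the step where you claim that $x_1\mid fx_2+gx_3$ together with coprimality of $x_1$ to $x_2$ and $x_3$ yields $x_1\mid f$ and $x_1\mid g$ separately. This inference is false: take $f=x_3$ and $g=-x_2$, so that $fx_2+gx_3=0$ is trivially divisible by $x_1$, yet neither $f$ nor $g$ is. The point is that $fx_2$ and $gx_3$ can share the monomial $x_2x_3$, so the two terms are not ``independent'' in the way your argument assumes. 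The same objection applies to your deductions from $x_2\mid P_2$ and $x_3\mid P_3$.

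The paper sidesteps this by observing that once $c=0$ the reduced $P_1,P_2,P_3$ satisfy $\sum_{i=1}^3 P_ix_i=0$ identically; writing $P_i=\ell_i x_i$ with $\ell_i$ linear then gives $\sum_{i=1}^3 \ell_i x_i^2=0$, and since the monomials $x_i^2 x_j$ and $x_i^2$ are linearly independent this forces each $\ell_i=0$. Your route can also be repaired: carrying out the divisibility analysis carefully (tracking cross-terms) leads to $f=\lambda x_3$, $g=-\lambda x_2$, $h=\lambda x_1$ for some $\lambda\in\RR$, and then one checks directly that $P_1=P_2=P_3=0$. Either fix closes the gap.
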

\begin{proof}
    Suppose $\chi=(P_1,...,P_4)$ defines a Lotka-Volterra vector field on $S^2\times S^1$. Then $x_i$ divides $P_i$ for $i\in \{1,...,4\}$. Since $\chi$ is a quadratic vector field, it will be of the form \eqref{quadratic-ch-s21}. $x_4$ divides $P_4$ only when $c=0$. In that case, $P_4=0, P_1=fx_2+gx_3, P_2=-fx_1+hx_3$ and $P_3=-gx_1-hx_2$.
    
    Suppose $P_i=\ell_ix_i$ for $i\in \{1,2,3\}$. Then $\sum\limits_{i=1}^3 P_ix_i=0=\sum\limits_{i=1}^3 \ell_ix_i^2.$ Since $\ell_i$ are linear polynomials, $\ell_i=0$ for each $i\in \{1,2,3\}$. Hence, $P_1=P_2=P_3=0$.
\end{proof}
\begin{theorem}\label{thm:noHam_quad}
There is no Hamiltonian of a quadratic vector field on $S^2\times S^1$. However, every quadratic vector field on $S^2\times S^1$ has a rational first integral.
\end{theorem}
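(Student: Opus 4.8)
The plan is to follow the same two-part strategy used in Theorem \ref{quadratic-hamiltonian} for the $S^1\times S^2$ case, adapted to the explicit form \eqref{quadratic-ch-s21} of a quadratic vector field on $S^2\times S^1$. For the nonexistence of a Hamiltonian, I would argue by contradiction: assume $H$ is a Hamiltonian for the vector field \eqref{quadratic-ch-s21}, so that the four defining relations
\begin{equation*}
P_1=-\frac{\partial H}{\partial x_2},\quad P_2=\frac{\partial H}{\partial x_1},\quad P_3=-\frac{\partial H}{\partial x_4},\quad P_4=\frac{\partial H}{\partial x_3}
\end{equation*}
hold, where $f=f_0+\sum_{i=1}^4 f_ix_i$, $g=g_0+\sum_{i=1}^4 g_ix_i$, and $h=h_0+\sum_{i=1}^4 h_ix_i$ are the linear polynomials appearing in \eqref{quadratic-ch-s21}. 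The overall goal is to show that these constraints force $c=0$ and force every nonconstant coefficient of $f,g,h$ to vanish, so that $\chi$ degenerates to a linear vector field, contradicting the assumption that $\chi$ is genuinely quadratic.

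The key computational steps proceed by successive partial integration, exactly as in the proof of Theorem \ref{quadratic-hamiltonian}. First I would integrate the relation $P_1=-\partial H/\partial x_2$ with respect to $x_2$ to obtain $H$ up to a function $H_1(x_1,x_3,x_4)$ independent of $x_2$; the cross term $\tfrac{c}{4}x_1x_4$ and the terms $fx_2+gx_3$ produce the explicit $x_2$-dependence. Next, substituting into $P_2=\partial H/\partial x_1$ and matching coefficients of the monomials in $x_2$ yields the first batch of vanishing conditions (including $c=0$ and relations among $f_i,g_i$) together with a partial differential equation determining $H_1$. One then integrates that to reduce $H$ to a function $H_2(x_3,x_4)$, imposes $P_4=\partial H/\partial x_3$ (noting that $P_4$ becomes $0$ once $c=0$) and $P_3=-\partial H/\partial x_4$, and matches coefficients again. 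Because $P_4$ involves the factor $-b^2(x_1^2+x_2^2+x_3^2)+x_4^2$ and $H$ is quadratic-plus-cubic in its leading behaviour, the monomial matching at each stage is linear in the unknowns $f_i,g_i,h_i$ and is straightforward to solve. I expect the conclusion to be $c=0$ and $f_i=g_i=h_i=0$ for $i\in\{1,2,3,4\}$, collapsing $\chi$ to the linear form of Corollary \ref{linear-s21}, the desired contradiction.

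The main obstacle, as in the $S^1\times S^2$ case, is purely bookkeeping: there are three linear polynomials $f,g,h$ here (versus two in Theorem \ref{quadratic-hamiltonian}) and the additional coupling $g x_3$ in $P_1$, $h x_3$ in $P_2$, and $-gx_1-hx_2$ in $P_3$, so one must track more coefficients and be careful that the compatibility of the mixed second partials $\partial^2 H/\partial x_1\partial x_3$, $\partial^2 H/\partial x_2\partial x_3$, etc., is automatically respected by the matching. The thing to verify carefully is that no special value of $b$ (recall $b>1$) allows a nontrivial solution to sneak through; since the obstructing coefficients are $b^2$ and $b^4-1$, both nonzero for $b>1$, the linear systems remain nondegenerate and force all parameters to vanish.

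For the second assertion, that every quadratic vector field on $S^2\times S^1$ admits a rational first integral, I would invoke Darboux integrability exactly as in Theorem \ref{quadratic-hamiltonian}. The hypersurface $G=(x_1^2+x_2^2+x_3^2-b^2)^2+x_4^2-1=0$ is invariant with cofactor $K=cx_4$ by Theorem \ref{quadratic-thm-s21}. The plan is to exhibit a second invariant algebraic hypersurface whose cofactor is a rational multiple of $K$; by analogy with the $S^1\times S^2$ computation one checks that $x_1^2+x_2^2+x_3^2=0$ is invariant with cofactor $\tfrac12 K=\tfrac{c}{2}x_4$ (this follows directly from applying $\chi$ to $x_1^2+x_2^2+x_3^2$ and using the antisymmetric structure of the $f,g,h$ terms together with the $\tfrac{c}{4}x_ix_4$ diagonal terms). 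Then by the Darboux Integrability Theory (Theorem 5, \cite{LlZh02}), the function $((x_1^2+x_2^2+x_3^2-b^2)^2+x_4^2-1)(x_1^2+x_2^2+x_3^2)^{-2}$ is a rational first integral of $\chi$, completing the proof.
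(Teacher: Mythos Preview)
Your approach is essentially the same as the paper's: successive partial integration of the Hamiltonian relations to force the vector field down to linear, followed by Darboux integrability using the invariant hypersurfaces $G=0$ and $x_1^2+x_2^2+x_3^2=0$ with cofactors $cx_4$ and $\tfrac{c}{2}x_4$. The paper happens to integrate $\partial H/\partial x_3=P_4$ first (which is efficient since $P_4$ has no $f,g,h$ in it and immediately forces $c=0$), while you start with $-\partial H/\partial x_2=P_1$; either order works.

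One point to correct in your expectations: the constraints do \emph{not} force $f_i=g_i=h_i=0$ for all $i\in\{1,\dots,4\}$. When you carry out the matching you will find that $f_3=-g_2=h_1$ is allowed to be nonzero (with $f_0$ also free). However, for these values one has $f=f_0+h_1x_3$, $g=-h_1x_2$, $h=h_1x_1$, and the quadratic pieces cancel identically in \eqref{quadratic-ch-s21}:
\[
fx_2+gx_3=f_0x_2,\qquad -fx_1+hx_3=-f_0x_1,\qquad -gx_1-hx_2=0,
\]
so $\chi$ is still linear and the contradiction stands. Just be prepared for this cancellation rather than a clean vanishing of all coefficients.
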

\begin{proof}
Any quadratic vector field on $S^2\times S^1$ can be described by \eqref{quadratic-ch-s21}. Suppose $H$ is a Hamiltonian of the quadratic vector field \eqref{quadratic-ch-s21} on $S^2\times S^1$, where $f=f_0+\sum\limits_{i=1}^4 f_ix_i,\break g=g_0+\sum\limits_{i=1}^4 g_ix_i$ and $h=h_0+\sum\limits_{i=1}^4 h_ix_i; f_i,g_i,h_i\in \RR$. 

Since $\frac{\partial H}{\partial x_3}=P_4=\frac{c}{2}(-b^2(x_1^2+x_2^2+x_3^2)+x_4^2+b^4-1)$, we get $$H=\frac{c}{2}(-b^2(x_1^2+x_2^2)+x_4^2+b^4-1)x_3-\frac{b^2c}{6}x_3^3+H_1(x_1,x_2,x_4)$$
where $H_1$ is a function independent of $x_3$. Now, from $-\frac{\partial H}{\partial x_4}=P_3$, we get
$$-cx_3x_4-\frac{\partial H_1}{\partial x_4}=\frac{c}{4}x_3x_4-g x_1-h x_2.$$
Thus,
\begin{equation}\label{eq:hamiltonian-1}
    c=g_3=h_3=0 \quad \text{and}
\end{equation}
since $H_1$ is a function independent of $x_3$. So, $\frac{\partial H_1}{\partial x_4}=(\sum\limits_{\substack{i=0, i\neq 3}}^4 g_ix_i)x_1+(\sum\limits_{\substack{i=0, i\neq 3}}^4 h_ix_i)x_2$ and hence
$$H_1=(g_0+g_1x_1+g_2x_2+\frac{1}{2}g_4x_4)x_1x_4+(h_0+h_1x_1+h_2x_2+\frac{1}{2}h_4x_4)x_2x_4+H_2(x_1,x_2)$$
where $H_2$ is a function of $x_1$ and $x_2$ only. Notice that $H=H_1$ since $c=0$. As $\frac{\partial H}{\partial x_1}=P_2$, we get
$$(g_0+2g_1x_1+g_2x_2+\frac{1}{2}g_4x_4)x_4+h_1x_2x_4+\frac{\partial H_2}{\partial x_1}=-fx_1+hx_3.$$
Thus, 
\begin{dmath*}
    (g_0+\frac{1}{2}g_4x_4)x_4+(2g_1+f_4)x_1x_4+(g_2+h_1)x_2x_4+(f_3-h_1)x_1x_3-(\sum\limits_{\substack{i=0, i\neq 1,3}}^4 h_ix_i)x_3=-\frac{\partial H_2}{\partial x_1}-(f_0+f_1x_1+f_2x_2)x_1
\end{dmath*} Then
\begin{equation}\label{eq:hamiltonian-2}
    g_0=g_4=h_0=h_2=h_4=0, f_4=-2g_1, f_3=-g_2=h_1,
\end{equation}
since $H_2$ is a function of $x_1,x_2$ only. Therefore,
$$H_2=-\frac{1}{2}(f_0+f_2x_2)x_1^2-\frac{1}{3}f_1x_1^3+H_3(x_2)$$
where $H_3$ is a function of $x_2$ only. Hence,
$$H=H_1=-\frac{1}{2}(f_0+f_2x_2)x_1^2-\frac{1}{3}f_1x_1^3+g_1x_1^2x_4+H_3(x_2).$$
Again, $-\frac{\partial H}{\partial x_2}=P_1$ gives 
$\frac{1}{2}f_2x_1^2-H_3^{'} =fx_2+gx_3$. Thus
\begin{equation}\label{eq:hamiltonian-3}
    f_1=f_2=f_4=g_1=0,
\end{equation}
since $H_3$ is a function of $x_2$ only.
Combining \eqref{eq:hamiltonian-1}, \eqref{eq:hamiltonian-2} and \eqref{eq:hamiltonian-3}, we obtain ${f_3=-g_2=h_1}$ and other coefficients of $f,g$ and $h$ are zero except $f_0$. Hence, $f=f_0+h_1x_3,$ $g=-h_1x_2, $ $h=h_1x_1$. For these $f,g$ and $h$ the vector field \eqref{quadratic-ch-s21} becomes a linear, which contradicts the assumption. So, there is no Hamiltonian of the quadratic vector field \eqref{quadratic-ch-s21}.
 
The hypersurfaces given by $x_1^2+x_2^2+x_3^2=0$ and ${(x_1^2+x_2^2+x_3^2-b^2)^2+x_4^2-1=0}$ are two invariant algebraic hypersurfaces of the vector field \eqref{quadratic-ch-s21} with cofactors $\frac{c}{2}x_4$ and $cx_4$ respectively. By Darboux Integrability Theory (Theorem 5, \cite{LlZh02}), the rational function $(x_1^2+x_2^2+x_3^2)^{-2} ((x_1^2+x_2^2+x_3^2-b^2)^2+x_4^2-1)$ is a first integral of the vector field.
\end{proof}

\begin{theorem}\label{thm:kolmo_vf_s2s1}
    Let $\chi=(P_1,...,P_4)$ be a cubic Kolmogorov vector field in $\RR^4$. Then $\chi$ is a vector field on $S^2\times S^1$ if and only if
    \begin{equation}\label{kolmogorov-form-s21}
        \begin{split}
     P_1&=x_1(\frac{c}{4}x_4^2 +\alpha x_2^2 +\beta x_3^2),\\
     P_2&=x_2(\frac{c}{4}x_4^2 -\alpha x_1^2+ \gamma x_3^2),\\
     P_3&=x_3(\frac{c}{4}x_4^2 -\beta x_1^2- \gamma x_2^2), \text{ and}\\
     P_4&=\frac{1}{2}cx_4(-b^2(x_1^2+x_2^2+x_3^2)+x_4^2+b^4-1)
        \end{split}
    \end{equation}
    where $c,\alpha,\beta,\gamma\in \RR$. Moreover, $K=cx_4^2$ is the cofactor of $\chi$ for $S^2\times S^1$.
\end{theorem}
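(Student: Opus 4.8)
The plan is to imitate the proof of Theorem~\ref{thm:cubic_kolmo_s1s2}, replacing the two-term cancellation Lemma~\ref{polynomial-ch-s12} by its three-term square analogue Lemma~\ref{polynomial-ch-s21}, since on $S^2\times S^1$ the ``large sphere'' involves $x_1,x_2,x_3$ while the ``circle'' direction is $x_4$ alone. Writing $P_i=x_i\psi_i$ with each $\psi_i$ quadratic and substituting into \eqref{vectorfield-s21}, one obtains the master identity
\[
4\Big(\sum_{i=1}^3 x_i^2-b^2\Big)\Big(\sum_{i=1}^3\psi_ix_i^2\Big)+2\psi_4x_4^2=\Big(\sum_{1\le i\le j\le 4}k_{ij}x_ix_j\Big)\big((x_1^2+x_2^2+x_3^2-b^2)^2+x_4^2-1\big),
\]
where the cofactor $K=\sum_{1\le i\le j\le 4}k_{ij}x_ix_j$ is forced to be homogeneous of degree $2$, because the left-hand side carries no constant or degree-one term. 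I would then decompose each $\psi_i$ into its homogeneous parts as in \eqref{homogeneous-division} and match homogeneous components degree by degree.

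The top degrees come first. Comparing the degree-$6$ terms yields $\sum_{i=1}^3\psi_i^{(2)}x_i^2=\tfrac14 K(x_1^2+x_2^2+x_3^2)$, and comparing the degree-$5$ terms gives $\sum_{i=1}^3\psi_i^{(1)}x_i^2=0$; since each $\psi_i^{(1)}$ is linear, Lemma~\ref{polynomial-ch-s21} (with $n=2$) forces $\psi_1^{(1)}=\psi_2^{(1)}=\psi_3^{(1)}=0$. Next I would set $x_4=0$ and compare the degree-$4$ and degree-$2$ parts of the resulting identity; combined with the degree-$6$ relation these give two linear relations between $\sum_{i=1}^3\psi_i^{(0)}x_i^2$ and $\sum_{1\le i\le j\le 3}k_{ij}x_ix_j$ whose only solution is $k_{ij}=0$ for $1\le i\le j\le 3$ and $\psi_1^{(0)}=\psi_2^{(0)}=\psi_3^{(0)}=0$. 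Feeding the reduced cofactor back into the degree-$6$ relation rewrites it as $\sum_{i=1}^3(\psi_i^{(2)}-\tfrac14 K)x_i^2=0$, and a final application of Lemma~\ref{polynomial-ch-s21} produces constants $\alpha,\beta,\gamma$ with $\psi_1^{(2)}=\tfrac14 K+\alpha x_2^2+\beta x_3^2$, $\psi_2^{(2)}=\tfrac14 K-\alpha x_1^2+\gamma x_3^2$, and $\psi_3^{(2)}=\tfrac14 K-\beta x_1^2-\gamma x_2^2$.

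It remains to pin down $\psi_4$ and the $x_4$-part of $K$. Comparing the degree-$2$ terms of the full identity gives $2\psi_4^{(0)}x_4^2=(b^4-1)K$; since $\psi_4^{(0)}$ is constant and $b^4-1\ne 0$ (as $b>1$), this forces $k_{14}=k_{24}=k_{34}=0$, so $K=cx_4^2$ with $c:=k_{44}$ and $\psi_4^{(0)}=\tfrac{b^4-1}{2}c$. The degree-$3$ terms vanish on the right-hand side, because the defining polynomial has only even-degree homogeneous parts, whence $\psi_4^{(1)}=0$; and comparing the degree-$4$ terms, after substituting the degree-$6$ relation, yields $\psi_4^{(2)}=\tfrac{c}{2}(-b^2(x_1^2+x_2^2+x_3^2)+x_4^2)$. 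Assembling $\psi_i=\psi_i^{(2)}+\psi_i^{(0)}$ and recalling $P_i=x_i\psi_i$ gives exactly \eqref{kolmogorov-form-s21}. The converse is a direct check that the fields in \eqref{kolmogorov-form-s21} satisfy \eqref{vectorfield-s21} with $K=cx_4^2$. The only non-routine step is the correct invocation of Lemma~\ref{polynomial-ch-s21} to resolve the two symmetric square identities; everything else is careful degree bookkeeping, strictly parallel to the $S^1\times S^2$ case.
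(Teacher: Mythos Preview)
Your proposal is correct and follows essentially the same approach as the paper's proof: the same master identity, the same degree-by-degree comparison (degrees $6,5$ for $\psi_1^{(2)},\psi_i^{(1)}$; the $x_4=0$ specialization at degrees $4$ and $2$ to kill $k_{ij}$ for $1\le i\le j\le 3$ and the $\psi_i^{(0)}$; then degrees $2,3,4$ of the full identity to determine $K=cx_4^2$ and $\psi_4$), and the same invocations of Lemma~\ref{polynomial-ch-s21}. The only cosmetic difference is that you justify $\psi_i^{(1)}=0$ via Lemma~\ref{polynomial-ch-s21} (forcing the quadratic expressions $Ax_j^2+Bx_k^2$ to vanish by degree), whereas the paper simply observes that linear polynomials satisfying $\sum\psi_i^{(1)}x_i^2=0$ must be zero.
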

\begin{proof}
Consider a cubic Kolmogorov vector field $\chi=(P_1,...,P_4)$ in $\RR^4$ with $$P_i=x_i \psi_i$$ for $i=1,...,4$. 
 We write
    $$\psi_i=\psi_i^{(2)}+\psi_i^{(1)}+\psi_i^{(0)}$$ as in \eqref{homogeneous-division}.
    \par Suppose $\chi$ is a vector field on $S^2\times S^1$. So it must satisfy
    \begin{dmath}\label{kolmogorov3-s21}
        4(\sum_{i=1}^3 x_i^2-b^2)(\sum_{i=1}^3 \psi_ix_i^2)+2 \psi_4x_4^2=(\sum_{1\leq i\leq j\leq 4} k_{ij}x_ix_j)((\sum_{i=1}^3 x_i^2-b^2)^2 + x_4^2 -1)
    \end{dmath} since there is no constant term on the left side. Comparing degree 6 terms in \eqref{kolmogorov3-s21}, we get
    \begin{dmath*}
        4(\sum_{i=1}^3 x_i^2)(\sum_{i=1}^3 \psi_i^{(2)}x_i^2)=(\sum_{1\leq i\leq j\leq 4} k_{ij}x_ix_j)(\sum_{i=1}^3 x_i^2)^2.
    \end{dmath*}
    Hence
    \begin{dmath}\label{kolmogorov3-deg-6-s21}
        \sum_{i=1}^3 \psi_1^{(2)}x_i^2=\frac{1}{4}(\sum_{1\leq i\leq j\leq 4} k_{ij}x_ix_j)(\sum_{i=1}^3 x_i^2).
    \end{dmath}
    Also comparing degree $5$ terms in \eqref{kolmogorov3-s21}, we get
    \begin{dmath*}
        4(\sum_{i=1}^3 x_i^2)(\sum_{i=1}^3 \psi_i^{(1)}x_i^2)=0.
    \end{dmath*}
    Hence $\sum_{i=1}^3 \psi_i^{(1)}x_i^2=0$. Then $\psi_1^{(1)}, \psi_2^{(1)}, \psi_3^{(1)}$ are zero since they are degree 1 polynomials.
    
    In particular, if $x_4=0$ then \eqref{kolmogorov3-s21} gives the following.
    \begin{dmath}\label{kolmogorov3-x_4-0-s21}
         4(\sum_{i=1}^3 x_i^2-b^2)(\sum_{i=1}^3 \psi_i(x_1,x_2,x_3,0)x_i^2)=(\sum_{1\leq i\leq j\leq 3} k_{ij}x_ix_j)((\sum_{i=1}^3 x_i^2-b^2)^2 -1).
    \end{dmath}
    Comparing degree 4 terms in \eqref{kolmogorov3-x_4-0-s21}
    \begin{dmath*}
4(\sum_{i=1}^3 x_i^2)(\sum_{i=1}^3 \psi_i^{(0)}(x_1,x_2,x_3,0)x_i^2)-4b^2(\sum_{i=1}^3 \psi_i^{(2)}(x_1,x_2,x_3,0)x_i^2)=-2b^2(\sum_{1\leq i\leq j\leq 3} k_{ij}x_ix_j)(\sum_{i=1}^3 x_i^2).
\end{dmath*}
Then, for $x_4=0$, the above equation together with \eqref{kolmogorov3-deg-6-s21} gives the following.
\begin{dmath}
\label{x_4-0-s21-kolmogorov-quad}
    4(\sum_{i=1}^3 \psi_i^{(0)}x_i^2)=-b^2(\sum_{1\leq i\leq j\leq 3} k_{ij}x_ix_j).
\end{dmath}
Also comparing degree 2 terms in \eqref{kolmogorov3-x_4-0-s21}, we get
\begin{dmath}
\label{x_4-0-s21-kolmogorov-linear}
    -4b^2(\sum_{i=1}^3 \psi_i^{(0)}x_i^2)=(b^4-1)(\sum_{1\leq i\leq j\leq 3} k_{ij}x_ix_j).
\end{dmath}
Now, from \eqref{x_4-0-s21-kolmogorov-quad} and \eqref{x_4-0-s21-kolmogorov-linear}, we obtain $k_{ij}=0$ for $1\leq i\leq j\leq 3$. Hence we get, $\psi_1^{(0)}=\psi_2^{(0)}=\psi_3^{(0)}=0$. Therefore, from \eqref{kolmogorov3-deg-6-s21} we get the following.
\begin{dmath}\label{homogeneous-2-kolmogorov3-s21}
\sum_{i=1}^3 \psi_i^{(2)}x_i^2=\frac{1}{4}(\sum_{i=1}^4 k_{i4}x_ix_4)(\sum_{i=1}^3 x_i^2).
\end{dmath}
Taking $K:=\sum_{i=1}^4 k_{i4}x_ix_4$, we get
$$(\psi_1^{(2)}-\frac{1}{4}K)x_1^2+(\psi_2^{(2)}-\frac{1}{4}K)x_2^+(\psi_3^{(2)}-\frac{1}{4}K)x_3^2=0.$$
Then, by Lemma \ref{polynomial-ch-s21},
\begin{equation*}
\psi_1^{(2)}=\frac{1}{4}K+\alpha x_2^2 +\beta x_3^2, \psi_2^{(2)}=\frac{1}{4}K-\alpha x_1^2 +\gamma x_3^2, \psi_3^{(2)}=\frac{1}{4}K-\beta x_1^2 -\gamma x_2^2
\end{equation*} for some $\alpha,\beta,\gamma \in \RR$.
So we have found all the homogeneous parts of $\psi_1$, $\psi_2$ and $\psi_3$. Now, let us find $\psi_4$.

Comparing the degree 2 terms in \eqref{kolmogorov3-s21}, we get
$$\psi_4^{(0)}=\frac{b^4 -1}{2}k_{44} \text{ and } k_{i4}=0 \text{ for } i=1,2,3.$$
Hence $K=k_{44}x_4^2$. Comparing degree 3 terms in \eqref{kolmogorov3-s21}, we get $\psi_4^{(1)}x_4^2=0$ which follows that $\psi_4^{(1)}=0$. Now comparing degree 4 terms in \eqref{kolmogorov3-s21}, we get
\begin{dmath*}
    -4b^2(\sum_{i=1}^3 \psi_i^{(2)}x_i^2)+2\psi_4^{(2)}x_4^2=k_{44}x_4^2(-2b^2(\sum_{i=1}^3 x_i^2)+x_4^2).
\end{dmath*}
Then using \eqref{homogeneous-2-kolmogorov3-s21} in the above equation, we obtain
\begin{dmath*}
    \psi_4^{(2)}x_4^2=\frac{1}{2}k_{44}x_4^2(-b^2(\sum_{i=1}^3 x_i^2)+x_4^2).
\end{dmath*}
Hence $\psi_4^{(2)}=\frac{1}{2}k_{44}(-b^2(x_1^2+x_2^2+x_3^2)+x_4^2)$. So we have found all homogeneous components of $\psi_4$. Renaming $k_{44}$ as $c$, we get our result.

If $P_1,...,P_4$ are given by \eqref{kolmogorov-form-s21}, then they satisfy \eqref{vectorfield-s21}. Thus, the converse part is true.
\end{proof}
\begin{theorem}\label{thm:noHam_kolm}
    There is no Hamiltonian of a cubic Kolmogorov vector field on $S^2\times S^1$. However, every cubic Kolmogorov vector field on $S^2\times S^1$ has a rational first integral.
\end{theorem}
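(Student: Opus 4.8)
The plan is to prove the two assertions separately, reusing the machinery already developed in this section. For the non-existence of a Hamiltonian, I would start from Theorem~\ref{thm:kolmo_vf_s2s1}, which guarantees that any cubic Kolmogorov vector field on $S^2\times S^1$ has the explicit form \eqref{kolmogorov-form-s21} with parameters $c,\alpha,\beta,\gamma\in\RR$. Assuming, towards a contradiction, that such a field admits a Hamiltonian $H$, I would invoke \cite[Theorem~1]{LlXi17}, exactly as in the proof of Theorem~\ref{thm:cubic-ham-s12}, to record the general shape of a Hamiltonian of a cubic Kolmogorov system:
$$H=x_1x_2\Big(\sum_{i+j=0}^{2}a_{ij}x_1^ix_2^j\Big)+x_3x_4\Big(\sum_{i+j=0}^{2}b_{ij}x_3^ix_4^j\Big)+c_0x_1x_2x_3x_4.$$
This ansatz already encodes the Kolmogorov divisibility constraints forced by $P_i=x_i\psi_i$ together with the symplectic pairing $(x_1,x_2),(x_3,x_4)$.

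Next I would impose the four Hamiltonian relations $P_1=-\partial H/\partial x_2$, $P_2=\partial H/\partial x_1$, $P_3=-\partial H/\partial x_4$ and $P_4=\partial H/\partial x_3$, and match coefficients of monomials against \eqref{kolmogorov-form-s21}. The relation $P_2=\partial H/\partial x_1$ alone annihilates the entire $a$-block except for the single relation $3a_{20}=-\alpha$, and forces $c=c_0=\gamma=0$, because $\partial H/\partial x_1$ contains no monomial matching $x_2x_4^2$ or $x_2x_3^2$. The relation $P_1=-\partial H/\partial x_2$ then reduces to $\tfrac{\alpha}{3}x_1^3=\alpha x_1x_2^2+\beta x_1x_3^2$, which forces $\alpha=\beta=0$ and hence $a_{20}=0$. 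With $c=0$ the remaining equations $P_3=-\partial H/\partial x_4=0$ and $P_4=\partial H/\partial x_3=0$ force the $b$-block to vanish as well, so $H$ is constant. Since a Hamiltonian is required to be non-constant, this is the desired contradiction, and no cubic Kolmogorov vector field on $S^2\times S^1$ is Hamiltonian.

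For the rational first integral I would argue by Darboux integrability as in Theorem~\ref{thm:noHam_quad}. A direct computation of $\chi(x_1^2+x_2^2+x_3^2)=2\sum_{i=1}^{3}x_iP_i$ shows that the $\alpha,\beta,\gamma$ contributions cancel in pairs, leaving $\chi(x_1^2+x_2^2+x_3^2)=\tfrac{c}{2}x_4^2(x_1^2+x_2^2+x_3^2)$; thus $x_1^2+x_2^2+x_3^2=0$ is an invariant algebraic hypersurface with cofactor $\tfrac{c}{2}x_4^2$, while $(x_1^2+x_2^2+x_3^2-b^2)^2+x_4^2-1=0$ is invariant with cofactor $K=cx_4^2$ by Theorem~\ref{thm:kolmo_vf_s2s1}. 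Since these cofactors are proportional, namely $2\cdot\tfrac{c}{2}x_4^2=cx_4^2$, the Darboux combination
$$\big(x_1^2+x_2^2+x_3^2\big)^{-2}\big((x_1^2+x_2^2+x_3^2-b^2)^2+x_4^2-1\big)$$
is a rational first integral by \cite[Theorem~5]{LlZh02}, and it remains one in the degenerate case $c=0$, where both factors are themselves first integrals.

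The coefficient bookkeeping is entirely routine; the step I expect to require the most care is organizing the four Hamiltonian equations so that the elimination is clearly forced in the correct order. In particular, one must make sure it is precisely the mixed monomials such as $x_2x_3^2$ and $x_1x_2x_3^2$, which are excluded from the Kolmogorov Hamiltonian ansatz, that drive $\gamma$ and $\beta$ to zero, and one must check that the Darboux first integral degenerates gracefully in the case $c=0$ rather than becoming trivial.
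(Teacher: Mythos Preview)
Your argument is correct. The Hamiltonian half is essentially the paper's proof with the equations processed in a different order: the paper starts from $\partial H/\partial x_3=P_4$ to kill $c,c_0$ and the $b$-block, then $-\partial H/\partial x_2=P_1$ and $\partial H/\partial x_1=P_2$; you start from $\partial H/\partial x_1=P_2$. Either order leads to the same cascade of vanishing coefficients.

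For the rational first integral you take a genuinely different route. The paper combines the four invariant hypersurfaces $x_1=0$, $x_2=0$, $x_3=0$ and $x_1^2+x_2^2+x_3^2=0$ via the linear relation $\gamma K_1-\beta K_2+\alpha K_3-\tfrac{\gamma-\beta+\alpha}{2}K_4=0$ among their cofactors, obtaining the Darboux function $x_1^{\gamma}x_2^{-\beta}x_3^{\alpha}(x_1^2+x_2^2+x_3^2)^{-(\gamma-\beta+\alpha)/2}$. You instead pair $x_1^2+x_2^2+x_3^2=0$ (cofactor $\tfrac{c}{2}x_4^2$) directly with the defining hypersurface of $S^2\times S^1$ (cofactor $cx_4^2$), exactly mirroring the constructions in Theorems~\ref{thm:cubic-ham-s12} and~\ref{thm:noHam_quad}. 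Your choice has two advantages: it is manifestly rational regardless of the real parameters $\alpha,\beta,\gamma$, whereas the paper's function is rational only when these happen to be integers; and it never collapses to a constant, whereas the paper's function reduces to $1$ when $\alpha=\beta=\gamma=0$. So your variant is both simpler and more faithful to the statement being proved.
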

\begin{proof}
Any cubic Kolmogorov vector field on $S^2\times S^1$ can be described by \eqref{kolmogorov-form-s21}. Suppose $H$ is a Hamiltonian of the vector field \eqref{kolmogorov-form-s21}. Then $$H=x_1x_2(\sum\limits_{i+j=0}^2 a_{ij}x_1^i x_2^j)+x_3x_4(\sum\limits_{i+j=0}^2 b_{ij}x_3^i x_4^j)+c_0x_1x_2x_3x_4$$ by \cite[Theorem 1]{LlXi17}. Since $\frac{\partial H}{\partial x_3}=P_4$, we get 
$$x_4(\sum\limits_{i+j=0}^2 b_{ij}x_3^i x_4^j)+x_3x_4(a_{10}+a_{11}x_4+2a_{20}x_3)+c_0x_1x_2x_4=\frac{1}{2}cx_4(-b^2(x_1^2+x_2^2+x_3^2)+x_4^2+b^4-1).$$
Thus, $c=c_0=0$ and $b_{ij}=0$ for $0\leq i+j\leq 2$. Hence, $H=x_1x_2(\sum\limits_{i+j=0}^2 a_{ij}x_1^i x_2^j)$. Again, $-\frac{\partial H}{\partial x_2}=P_1$ implies that
$$-x_1(\sum\limits_{i+j=0}^2 a_{ij}x_1^i x_2^j)-x_1x_2(a_{01}+a_{11}x_1+2a_{02}x_2)=x_1(\alpha x_2^2+\beta x_3^2).$$ Then, $a_{00}=a_{10}=a_{01}=a_{11}=a_{20}=\beta=0$ and $-3a_{02}=\alpha$. Hence, $H=a_{02}x_1x_2^3$. Now, $\frac{\partial H}{\partial x_1}=P_2$
 implies that $$a_{02}x_2^3=x_2(-\alpha x_1^2+\gamma x_3^2).$$ Thus, $a_{02}=0$ and this gives $H=0$, which is absurd. Hence, a cubic Kolmogorov vector field on $S^2\times S^1$ cannot have any Hamiltonian.

Let $\chi=(P_1, ..., P_4)$ be a vector field of the form \eqref{kolmogorov-form-s21}. Then the hypersurfaces given by $x_1=0,$ ${x_2=0,x_3=0,}$ and $x_1^2+x_2^2+x_3^2=0$ are invariant hypersurfaces $\chi$ with cofactors ${\frac{c}{4}x_4^2+\alpha x_2^2+\beta x_3^2,}$ $ \frac{c}{4}x_4^2-\alpha x_1^2+\gamma x_3^2, \frac{c}{4}x_4^2-\beta x_1^2-\gamma x_2^2,$ and $\frac{c}{2}x_4^2$ respectively. Notice that $$\gamma(\frac{c}{4}x_4^2+\alpha x_2^2+\beta x_3^2)-\beta(\frac{c}{4}x_4^2-\alpha x_1^2+\gamma x_3^2)+\alpha(\frac{c}{4}x_4^2-\beta x_1^2-\gamma x_2^2)-\frac{\gamma-\beta+\alpha}{2}\frac{c}{2}x_4^2=0.$$
Hence by Darboux Integrability Theory (Theorem 5, \cite{LlZh02}), the function
$$f=x_1^{\gamma}x_2^{-\beta}x_3^{\alpha}(x_1^2+x_2^2+x_3^2)^{-\frac{\gamma-\beta+\alpha}{2}}$$ is a rational first integral of $\chi$.
\end{proof}

\begin{lemma}\label{deg-m-n-s21}
Suppose $P_1,P_2,P_3,P_4\in \RR[x_1,...,x_4]$ are non-zero homogeneous polynomials with $\deg(P_i)=m$ for $1\leq i\leq 3$ and $\deg(P_4)=n$. If $m-1\leq n\leq m+3$ then ${G=(x_1^2+x_2^2+x_3^2-b^2)^2 + x_4^2 -1}$ is a first integral of $\chi=(P_1,...,P_4)$.
\end{lemma}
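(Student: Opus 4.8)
The plan is to follow the proof of Lemma~\ref{deg-m-n-s12} almost verbatim, transporting its degree-counting argument to the defining polynomial $G=(x_1^2+x_2^2+x_3^2-b^2)^2+x_4^2-1$ of $S^2\times S^1$. First I would compute $\chi G$ explicitly. Since $\partial G/\partial x_i=4(x_1^2+x_2^2+x_3^2-b^2)x_i$ for $i=1,2,3$ and $\partial G/\partial x_4=2x_4$, I get
\begin{equation*}
\chi G=4(x_1^2+x_2^2+x_3^2-b^2)(P_1x_1+P_2x_2+P_3x_3)+2P_4x_4.
\end{equation*}
As $P_1,P_2,P_3$ are homogeneous of degree $m$ and $P_4$ of degree $n$, every monomial of $\chi G$ has degree $m+3$ (from $(x_1^2+x_2^2+x_3^2)(P_1x_1+P_2x_2+P_3x_3)$), $m+1$ (from $-b^2(P_1x_1+P_2x_2+P_3x_3)$), or $n+1$ (from $2P_4x_4$). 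Under the hypothesis $n\ge m-1$, each of these degrees is at least $m$.

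Next I would invoke invariance. If $\chi$ is a vector field on $S^2\times S^1$ then $\chi G=KG$ for some $K\in\RR[x_1,\dots,x_4]$, and
\begin{equation*}
KG=K\big((x_1^2+x_2^2+x_3^2)^2-2b^2(x_1^2+x_2^2+x_3^2)+x_4^2\big)+K(b^4-1).
\end{equation*}
Because $b>1$, the constant term $b^4-1$ of $G$ is nonzero; hence $K$ and $KG$ have the same lowest degree, while $\deg(KG)=\deg K+4$ since $\RR[x_1,\dots,x_4]$ is an integral domain. In particular, $K(b^4-1)$ is the lowest-degree part of $KG$, of degree exactly $\deg K$.

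I would then split into two cases according to the location of $n$. If $m-1\le n\le m+2$, then $\chi G$ has degree at most $m+3$, so $\deg K\le m-1$; thus the lowest part $K(b^4-1)$ has degree at most $m-1$, strictly below the minimal degree $m$ of $\chi G$, which is impossible unless $K=0$. If $n=m+3$, then $\chi G$ has degree $m+4$, whence $\deg K\le m$; but now every monomial of $\chi G$ has degree at least $m+1$, while $K(b^4-1)$ has degree at most $m$, again forcing $K=0$. In either case $K=0$, so $\chi G=0$ and $G$ is a first integral of $\chi$.

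The only step needing care is the degree bookkeeping that isolates $K(b^4-1)$ as the unique lowest-degree part of $KG$ and thereby converts a bound on the top degree of $K$ into a contradiction at the bottom of the degree range; this is precisely where the hypothesis $b>1$ (ensuring $b^4-1\ne0$) enters. Everything else is the routine computation already carried out for $S^1\times S^2$.
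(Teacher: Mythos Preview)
Your proposal is correct and follows essentially the same approach as the paper, which simply states that the arguments are analogous to those for Lemma~\ref{deg-m-n-s12}; you have carried out precisely that transport of the degree-counting argument. You also correctly supply the implicit hypothesis (missing from the lemma's statement but clearly intended, as in Lemma~\ref{deg-m-n-s12}) that $\chi$ is a vector field on $S^2\times S^1$, so that $\chi G=KG$.
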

\begin{proof}
    The arguments are similar to the proof of Lemma \ref{deg-m-n-s12}.
\end{proof}

\begin{lemma}\label{lem:P_4-0}
    Let $P_i\in \mathbb{R}[x_1,...,x_4]$ is a homogeneous polynomial for $i=1, ..., 4$ such that  ${G=(x_1^2+x_2^2+x_3^2-b^2)^2 +x_4^2-1}$ is a first integral of $\chi=(P_1,...,P_4)$. Then $P_4=0$.
\end{lemma}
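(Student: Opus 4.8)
The plan is to turn the first-integral condition into a single polynomial identity and then push it through a short divisibility-and-homogeneity argument. Since $\partial G/\partial x_i = 4x_i(x_1^2+x_2^2+x_3^2-b^2)$ for $i=1,2,3$ and $\partial G/\partial x_4 = 2x_4$, the requirement $\chi G = 0$ (which is exactly what ``$G$ is a first integral'' means) becomes
\begin{equation*}
4(x_1^2+x_2^2+x_3^2-b^2)(P_1x_1+P_2x_2+P_3x_3)+2x_4P_4=0.
\end{equation*}
Abbreviating $u:=x_1^2+x_2^2+x_3^2$ and $S:=P_1x_1+P_2x_2+P_3x_3$, I would rewrite this as $x_4P_4=2(b^2-u)S$.

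Next I would show that $x_4$ divides $S$. The left-hand side above is divisible by $x_4$, so $x_4 \mid 2(b^2-u)S$; since $\RR[x_1,\dots,x_4]$ is a unique factorization domain, $x_4$ is prime, and $x_4$ does not divide $b^2-u$ (which is free of $x_4$ and nonzero because $b>1$), it must divide $S$. Writing $S=x_4T$ and cancelling the common factor $x_4$ turns the identity into
\begin{equation*}
P_4=2(b^2-u)T.
\end{equation*}

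The crux is then a degree count at the two extremes. Assume $T\neq0$ and write its homogeneous decomposition $T=\sum_{k=s}^{t}T_k$ with $T_s,T_t\neq0$ and $s\le t$. Because $b^2-u$ carries a degree-$0$ part $b^2\neq0$ and a degree-$2$ part $-u\neq0$, the product $2(b^2-u)T$ has a nonzero homogeneous component $2b^2T_s$ in degree $s$ and a nonzero homogeneous component $-2uT_t$ in degree $t+2$; no cancellation can occur at these outermost degrees since $\RR[x_1,\dots,x_4]$ is an integral domain. As $s<t+2$, the polynomial $2(b^2-u)T$ genuinely involves two distinct degrees and so is not homogeneous, contradicting the hypothesis that $P_4=2(b^2-u)T$ is homogeneous. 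Hence $T=0$, and therefore $P_4=0$.

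I expect the only delicate point to be this bookkeeping at the extreme degrees, together with the observation that the shift by $b^2$ inside $G$ is precisely what forces the two surviving components of $2(b^2-u)T$ to sit two degrees apart and thereby destroys homogeneity. If one prefers to avoid the divisibility step, the same extreme-degree comparison can be applied directly to $x_4P_4=2(b^2-u)S$, matching the lowest and highest homogeneous parts of the right-hand side against the single degree of $x_4P_4$; this again yields $S=0$ and hence $P_4=0$, mirroring the reasoning already used in Lemma \ref{deg-m-n-s21} and Theorem \ref{thm:degree-relation}.
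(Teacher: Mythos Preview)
Your proof is correct and follows essentially the same idea as the paper's: from the identity $4(u-b^2)S+2x_4P_4=0$ one deduces that $P_4$ is a multiple of the non-homogeneous polynomial $b^2-u$, which forces $P_4=0$ by homogeneity. The paper reaches this in one line by observing directly that $x_1^2+x_2^2+x_3^2-b^2$ divides $P_4$, whereas you first cancel the factor $x_4$ from $S$ and then spell out the degree-comparison argument showing $(b^2-u)T$ cannot be homogeneous unless $T=0$; this is a slightly longer route to the same conclusion, but it has the virtue of making explicit the fact (tacitly used in the paper) that a nonzero homogeneous polynomial admits only homogeneous factors.
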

\begin{proof}
    Since $G$ is a first integral of $\chi$, we get the following.
    \begin{dmath*}
        4(x_1^2+x_2^2+x_3^2-b^2)(\sum\limits_{i=1}^3 P_ix_i)+2P_4x_4=0.
    \end{dmath*}
    Hence $x_1^2+x_2^2+x_3^2-b^2$ divides $P_4$. This is possible only when $P_4=0$ since $P_4$ is a homogeneous polynomial.
\end{proof}
\begin{corollary}
    There is no Type-$n$ vector field on $S^2\times S^1$ for $n\in \NN$.
\end{corollary}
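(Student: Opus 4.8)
The plan is to combine the definition of a Type-$n$ vector field with Lemma~\ref{lem:P_4-0}. Recall that $\chi=(P_1,P_2,P_3,P_4)$ is a Type-$n$ vector field precisely when all four components $P_1,\dots,P_4$ are \emph{nonzero} homogeneous polynomials of the same degree $n$. I will argue by contradiction: suppose such a $\chi$ is a vector field on $S^2\times S^1$, so that $G=(x_1^2+x_2^2+x_3^2-b^2)^2+x_4^2-1$ is an invariant algebraic hypersurface of $\chi$.

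First I would record that on $S^2\times S^1$ the cofactor identity \eqref{vectorfield-s21} holds, namely
\begin{equation*}
4(x_1^2+x_2^2+x_3^2-b^2)(P_1x_1+P_2x_2+P_3x_3)+2P_4x_4=KG
\end{equation*}
for some $K\in\RR[x_1,\dots,x_4]$. The degree bookkeeping here is the key point: every $P_i$ has degree $n$, so each term on the left-hand side has degree at most $n+3$, whereas $G$ has its top-degree part $(x_1^2+x_2^2+x_3^2)^2$ of degree $4$ together with a constant term $b^4-1$. Comparing lowest-degree (constant) contributions, the term $K(b^4-1)$ would have to be matched on the left, but the left-hand side has no terms of degree below $n+1$; this forces $K=0$, exactly as in the degree argument of Lemma~\ref{deg-m-n-s12}. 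Indeed, since $m=n$ here and $\deg(P_4)=n$ also satisfies $m-1\le n\le m+3$, Lemma~\ref{deg-m-n-s21} applies directly and tells me that $G$ is a first integral of $\chi$ with no further computation.

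Once $G$ is known to be a first integral, Lemma~\ref{lem:P_4-0} applies verbatim, since its only hypotheses are that the $P_i$ are homogeneous and that $G$ is a first integral of $\chi$. That lemma yields $P_4=0$. This directly contradicts the defining requirement of a Type-$n$ vector field that $P_4$ be nonzero. Hence no Type-$n$ vector field on $S^2\times S^1$ can exist, which is the assertion of the corollary.

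I do not anticipate a serious obstacle here, as the corollary is essentially the conjunction of two already-established lemmas together with the structural asymmetry of the presentation \eqref{eq:s21} of $S^2\times S^1$: the single ``parallel'' coordinate $x_4$ is precisely the one forced to vanish. The only point requiring a moment's care is verifying that the hypotheses of Lemma~\ref{deg-m-n-s21} are genuinely met, i.e.\ that the common degree $m=n$ of $P_1,P_2,P_3$ and the degree of $P_4$ stand in the required relation $m-1\le n\le m+3$; since all four degrees equal $n$ this holds trivially. I would state the contrapositive cleanly so that the reader sees the conclusion $P_4=0$ is incompatible with $P_4\neq 0$, and conclude the proof.
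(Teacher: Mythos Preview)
Your proposal is correct and follows essentially the same approach as the paper: apply Lemma~\ref{deg-m-n-s21} (with $m=n$) to conclude that $G$ is a first integral, then invoke Lemma~\ref{lem:P_4-0} to force $P_4=0$, contradicting the Type-$n$ hypothesis. The paper's proof is just a terser version of exactly this two-lemma argument.
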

\begin{proof}
    If $\chi$ is a Type-$n$ vector field on $S^2\times S^1$ then by Lemma \ref{deg-m-n-s21}, \break $G=(x_1^2+x_2^2+x_3^2-b^2)^2 +x_4^2-1$ is a first integral of $\chi$. Again from Lemma \ref{lem:P_4-0}, if $G$ is a first integral of $\chi$, then $P_4$ must be zero, which contradicts the assumption. Hence there is no Type-$n$ vector field on $S^2\times S^1$.
\end{proof}
\begin{theorem}
\label{pseudo-type-n}
Suppose $\chi=(P_1,...,P_4)$ is a Pseudo Type-$n$ vector field in $\RR^4$. Then $\chi$ is a Pseudo Type-$n$ vector field on $S^2\times S^1$ if and only if there exists $A,B,C\in \RR[x_1,...,x_4]$ such that
\begin{equation}\label{pseudo-type-n-form-s21}
P_1=Ax_2 +Bx_3, P_2=-Ax_1+Cx_3,P_3=-Bx_1-Cx_2,P_4=0.
\end{equation}
\end{theorem}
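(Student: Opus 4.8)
The plan is to start from the defining relation \eqref{vectorfield-s21}. Since $\chi$ is Pseudo Type-$n$, we have $P_4=0$ and $P_1,P_2,P_3$ homogeneous of degree $n$, so \eqref{vectorfield-s21} collapses to
$$4(x_1^2+x_2^2+x_3^2-b^2)\,S = K\,G,$$
where I write $S:=P_1x_1+P_2x_2+P_3x_3$ (homogeneous of degree $n+1$), $u:=x_1^2+x_2^2+x_3^2$, and $G=(u-b^2)^2+x_4^2-1$. The whole theorem then reduces to showing that $S$ is the zero polynomial, because once $S=0$ the conclusion is immediate from Lemma \ref{polynomial-ch-s21} applied (with exponent $1$ and the distinct indices $1,2,3$) to $P_1x_1+P_2x_2+P_3x_3=0$, which yields exactly $P_1=Ax_2+Bx_3$, $P_2=-Ax_1+Cx_3$, $P_3=-Bx_1-Cx_2$ for some $A,B,C\in\RR[x_1,\dots,x_4]$.

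So the heart of the argument, and the step I expect to be the main obstacle, is forcing $K=0$ (and hence $S=0$). I would do this by a degree/support comparison on the two sides of the displayed identity. The left-hand side $4uS-4b^2S$ has nonzero homogeneous components only in degrees $n+3$ (namely $4uS$) and $n+1$ (namely $-4b^2S$), and both are nonzero precisely when $S\neq0$, since $\RR[x_1,\dots,x_4]$ is an integral domain and $u\neq0$, $b\neq0$. On the right, assuming $K\neq0$ with lowest homogeneous part of degree $p$ and highest of degree $q$, the lowest-degree part of $KG$ is $(b^4-1)K^{(p)}$ (degree $p$) and the highest is $K^{(q)}u^2$ (degree $q+4$); both are nonzero because $b>1$ gives $b^4-1\neq0$ and $u^2\neq0$, and there is no cancellation at these extreme degrees. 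Matching the extreme degrees of the two sides forces $p=n+1$ and $q+4=n+3$, i.e. $q=n-1<n+1=p$, contradicting $p\le q$. Hence either $S=0$ or $K=0$; and if $K=0$ the identity $4(u-b^2)S=0$ forces $S=0$ as well, while if $S=0$ then $KG=0$ forces $K=0$. In every case $S=0$.

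I expect the converse to be routine: substituting the proposed forms together with $P_4=0$ into $S$ gives $S=(Ax_2+Bx_3)x_1+(-Ax_1+Cx_3)x_2+(-Bx_1-Cx_2)x_3=0$, so \eqref{vectorfield-s21} holds with cofactor $K=0$, confirming that $G$ is invariant and $\chi$ is a vector field on $S^2\times S^1$. It is worth noting the structural parallel with the earlier Proposition ruling out Pseudo Type-$n$ fields on $S^1\times S^2$ and with Lemma \ref{lem:P_4-0}: the same mechanism, a homogeneous multiple being divisible by the non-homogeneous factor $u-b^2$ whose constant term $-b^2\neq0$, is what collapses everything. The only difference is that here $P_4$ is already zero, so instead of reaching a contradiction we obtain a genuine family parametrized by $A,B,C$.
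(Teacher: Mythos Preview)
Your proof is correct and follows essentially the same approach as the paper: reduce \eqref{vectorfield-s21} with $P_4=0$ to $4(u-b^2)S=KG$, force $K=0$ (hence $S=0$) by a degree comparison, and then invoke Lemma \ref{polynomial-ch-s21}. The paper's own proof is terse---it simply asserts ``Hence, we get $P_1x_1+P_2x_2+P_3x_3=0$'' and cites Lemma \ref{polynomial-ch-s21}---leaving the degree argument implicit (the reader is presumably expected to fill it in along the lines of the Proposition following Theorem \ref{type-n-s12}); your version spells this out rigorously.
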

\begin{proof}
The vector field $\chi$ satisfies \eqref{vectorfield-s21} and also $P_4=0$ since $\chi$ is Pseudo Type-$n$. Hence, we get $P_1x_1 + P_2x_2 +P_3x_3=0$. Now, using Lemma \ref{polynomial-ch-s21}, we get the stated form of a Pseudo Type-$n$ vector field on $S^2\times S^1$.
\par If $P_1,...,P_4$ are given by \eqref{pseudo-type-n-form-s21}, then they satisfy \eqref{vectorfield-s21}. Thus, the converse part is true.
\end{proof}
\begin{theorem}\label{thm:psud_indp}
     There is no Hamiltonian of a Pseudo Type-$n$ vector field on $S^2\times S^1$. However, each Pseudo Type-$n$ vector field on $S^2\times S^1$ has atleast two independent first integrals.
\end{theorem}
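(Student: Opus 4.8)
The plan is to handle the two assertions separately, leaning throughout on the structural identity for a Pseudo Type-$n$ vector field on $S^2\times S^1$ that was extracted in the proof of Theorem \ref{pseudo-type-n}, namely $x_1P_1+x_2P_2+x_3P_3=0$, together with $P_4=0$. This lets me avoid the step-by-step integration of a putative Hamiltonian used in Theorems \ref{thm:noHam_quad} and \ref{thm:noHam_kolm} and replace it by a short structural argument.

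For the first assertion I would argue by contradiction. Suppose $H$ is a Hamiltonian of $\chi$. Since $P_4=\frac{\partial H}{\partial x_3}=0$, the function $H$ is independent of $x_3$, and therefore so are $P_1=-\frac{\partial H}{\partial x_2}$ and $P_2=\frac{\partial H}{\partial x_1}$. Now I invoke the identity $x_3P_3=-(x_1P_1+x_2P_2)$. Its right-hand side is independent of $x_3$, so $x_3P_3$ is a polynomial that is simultaneously independent of $x_3$ and divisible by $x_3$; evaluating at $x_3=0$ then forces $x_3P_3\equiv 0$, hence $P_3=0$. This contradicts the assumption that $\chi$ is a Pseudo Type-$n$ vector field, for which $P_3$ is a (nonzero) homogeneous polynomial of degree $n$. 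Hence no Hamiltonian exists.

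For the second assertion I would simply exhibit the two first integrals. Because $P_4=0$, the coordinate function $x_4$ satisfies $\chi(x_4)=P_4=0$, so $x_4$ is a first integral. Because $x_1P_1+x_2P_2+x_3P_3=0$, the function $x_1^2+x_2^2+x_3^2$ satisfies $\chi(x_1^2+x_2^2+x_3^2)=2(x_1P_1+x_2P_2+x_3P_3)=0$, so it is a second first integral; alternatively one may run the method of characteristics exactly as in Theorem \ref{thm:2-first-integral-s12}, where the fraction $\frac{dx_4}{0}$ yields $x_4$ and the combination $\frac{x_1\,dx_1+x_2\,dx_2+x_3\,dx_3}{0}$ yields $x_1^2+x_2^2+x_3^2$. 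Their differentials $(0,0,0,1)$ and $(2x_1,2x_2,2x_3,0)$ are linearly independent wherever $(x_1,x_2,x_3)\neq(0,0,0)$; since $b>1$ forces $x_1^2+x_2^2+x_3^2\geq b^2-1>0$ at every point of $S^2\times S^1$, the two first integrals are independent on the whole hypersurface.

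The only genuinely delicate point is the first part, and it is conceptual rather than computational: one must notice that the constraint $\frac{\partial H}{\partial x_3}=0$ propagates to $P_1$ and $P_2$ and then collides with the Pseudo Type-$n$ identity to annihilate $P_3$. Once this observation is in place, both halves of the theorem are immediate, so I expect no serious obstacle beyond correctly citing the identity $x_1P_1+x_2P_2+x_3P_3=0$ from Theorem \ref{pseudo-type-n} and the independence check for the two first integrals.
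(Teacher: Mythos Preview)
Your argument is correct and essentially identical to the paper's: there too one observes $\partial H/\partial x_3=P_4=0$, so $H$ is independent of $x_3$, and then the identity $x_1P_1+x_2P_2=-x_3P_3$ (expressed in the paper via the explicit $A,B,C$ form of Theorem~\ref{pseudo-type-n}) forces $P_3=0$; the same two first integrals $x_4$ and $x_1^2+x_2^2+x_3^2$ are then produced by the method of characteristics. The only addition you make is the explicit independence check for the gradients, which the paper asserts without proof.
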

\begin{proof}
    Suppose $\chi$ is the Pseudo Type-$n$ vector field \eqref{pseudo-type-n-form-s21}. Assume that $\chi$ is a Hamiltonian vector field with Hamiltonian $H$. $H$ is independent of $x_3$ since $\frac{\partial H}{\partial x_3}=P_4=0$. Hence, $$-\frac{\partial H}{\partial x_2}=P_1=Ax_2+Bx_3 \mbox{ and }  \frac{\partial H}{\partial x_1}=P_2=-Ax_1+Cx_3.$$ Now, we get that $-x_1\frac{\partial H}{\partial x_2}+x_2\frac{\partial H}{\partial x_1}=(Bx_1+Cx_2)x_3$. Then $Bx_1+Cx_2=0$ since $H$ is independent of $x_3$. Hence, $P_3=-(Bx_1+Cx_2)=0$, which contradicts that $\chi$ is a Pseudo Type-$n$ vector field. Hence, $\chi$ is not Hamiltonian.
    
Suppose $f$ is a first integral of the Pseudo Type-$n$ vector field \eqref{pseudo-type-n-form-s21}. Then $\sum\limits_{i=1}^3 P_i \frac{\partial f}{\partial x_i}=0$. This implies
    \begin{equation*}
        (Ax_2+Bx_3)\frac{\partial f}{\partial x_1}+(-Ax_1+Cx_3)\frac{\partial f}{\partial x_2}+(-Bx_1-Cx_2)\frac{\partial f}{\partial x_3}=0.
\end{equation*}
By \cite[Theorem 3(Page 53)]{Sn13}, solutions of the above can be given in terms of the solutions of the equations
$$\frac{dx_1}{Ax_2+Bx_3}=\frac{dx_2}{-Ax_1+Cx_3}=\frac{dx_3}{-Bx_1-Cx_2}=\frac{dx_4}{0}=\frac{df}{0}=\frac{x_1dx_1+x_2dx_2+x_3dx_3}{0}=\frac{dx_4}{0}.$$
Hence, we obtain two independent first integrals, $x_1^2+x_2^2+x_3^2$ and $x_4$.
\end{proof}
\begin{corollary}
    Any Pseudo Type-$n$ Hamiltonian system on $S^2\times S^1$ is integrable in the Liouville sense.
\end{corollary}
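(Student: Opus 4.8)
The plan is to read the statement against the definition of Liouville integrability and against the immediately preceding Theorem~\ref{thm:psud_indp}. By definition, a system on $\RR^4$ qualifies as integrable in the Liouville sense only if it is first a Hamiltonian system, and then additionally admits a first integral $F$ functionally independent of its Hamiltonian $H$. Thus the hypothesis of the corollary already builds in the requirement that the Pseudo Type-$n$ vector field be Hamiltonian, and the task reduces to producing one extra independent first integral whenever that hypothesis is met.

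The key observation I would use is that the first assertion of Theorem~\ref{thm:psud_indp} states that \emph{no} Pseudo Type-$n$ vector field on $S^2\times S^1$ admits a Hamiltonian. Consequently, the collection of Pseudo Type-$n$ Hamiltonian systems on $S^2\times S^1$ is empty, and the corollary holds vacuously. I would present this as the one-line core of the argument: there is nothing over which to quantify, so the implication is trivially satisfied, exactly parallel in spirit to the corollary following Theorem~\ref{thm:2-first-integral-s12} but with the roles reversed by the non-existence result.

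For completeness I would also record the conditional argument that mirrors the $S^1\times S^2$ development. Were such a Hamiltonian $H$ to exist, the second assertion of Theorem~\ref{thm:psud_indp} furnishes two functionally independent first integrals, namely $x_1^2+x_2^2+x_3^2$ and $x_4$. Since $H$ is itself a first integral, and since two functions that are both functionally dependent on $H$ would be functionally dependent on each other, at least one of $x_1^2+x_2^2+x_3^2$ and $x_4$ must be functionally independent of $H$. That independent first integral is precisely what the definition of Liouville integrability requires, so the system would be integrable in the Liouville sense even on this counterfactual hypothesis.

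The only subtlety here is interpretive rather than computational: one must notice that the non-existence clause of Theorem~\ref{thm:psud_indp} makes the hypothesis unsatisfiable, so the corollary is essentially a formality recorded for symmetry with the $S^1\times S^2$ case. No genuine obstacle arises, and no calculation beyond invoking the cited theorem is needed.
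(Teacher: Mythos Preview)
Your proof is correct. The paper states the corollary without proof, placing it immediately after Theorem~\ref{thm:psud_indp}, and by the parallel structure with the corollary following Theorem~\ref{thm:2-first-integral-s12} the intended argument is almost certainly your conditional one: given a Hamiltonian $H$, the two independent first integrals $x_1^2+x_2^2+x_3^2$ and $x_4$ cannot both be dependent on $H$, so one of them furnishes the extra integral required by the definition.

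Your primary observation, that the first clause of Theorem~\ref{thm:psud_indp} already makes the hypothesis empty and the corollary vacuous, is sharper than what the paper's phrasing suggests the authors had in mind, and it is a legitimate point: the corollary is indeed recorded only for formal symmetry with the $S^1\times S^2$ case, where Type-$1$ vector fields genuinely are Hamiltonian. So both routes are valid; yours simply makes explicit a tension in the paper's exposition that the authors leave unremarked.
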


\section{Invariant meridian and parallel hyperplanes}\label{sec:mer_par}
This section introduces meridian and parallel hyperplanes in $\RR^4$.  Then we compute an upper bounds for a degree $n$ vector field on $S^1 \times S^2$ and $S^2 \times S^1$. We also show when these bounds can be achieved. We define,
\begin{enumerate}
    \item a hyperplane of the form $a_1x_1+a_2x_2+a_3x_3=0$ as a meridian hyperplane, and
    \item a hyperplane of the form $x_4=k$ as a parallel hyperplane
\end{enumerate}
where $a_1,a_2,a_3,k\in \RR$.

\begin{theorem}
    Suppose $\chi$ is a vector field on $S^1\times S^2$.
\begin{enumerate}[(i)]
    \item If $\chi$ is quadratic, then the number of invariant hyperplanes of the form ${a_1x_1+a_2x_2=0}$ is either 0, 1, or infinite, and the number of invariant hyperplanes of the form $a_3x_3+a_4x_4=0$ is either 0 or 1.
    \item Let $\chi$ be a cubic Kolmogorov vector field such that it has finitely many invariant hyperplanes of the form $a_1x_1+a_2x_2=0$ and $a_3x_3+a_4x_4=0$. Then only such invariant hyperplanes are $x_1=0,x_2=0,x_3=0$, and $x_4=0$.
    \item Let $\chi$ be the Type-$n$ vector field \eqref{type-n-form-s12}. Hyperplane $a_1x_1+a_2x_2=0$ is invariant if and only if $a_1x_1+a_2x_2$ divides $A$. Also, the hyperplane $a_3x_3+a_4x_4=0$ is invariant if and only if $a_3x_3+a_4x_4$ divides $B$. Hence the sharp bound for the number of such invariant hyperplanes of $\chi$ is $n-1$.
\end{enumerate}  
\end{theorem}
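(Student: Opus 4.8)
The plan is to treat all three parts by a single mechanism: for a candidate linear form $h$, compute $\chi(h)=\sum_i P_i\,\partial h/\partial x_i$ from the explicit normal forms of Theorems \ref{thm:quad_fv_s1s2}, \ref{thm:cubic_kolmo_s1s2} and \ref{type-n-s12}, and recall that $\{h=0\}$ is invariant precisely when $h\mid\chi(h)$. The recurring algebraic fact I will exploit is that a nonzero real linear form $a_1x_1+a_2x_2$ never divides its companion $a_1x_2-a_2x_1$ (these are proportional only if $a_1^2+a_2^2=0$), and likewise $a_3x_3+a_4x_4\nmid a_3x_4-a_4x_3$. Together with Lemma \ref{polynomial-ch-s12} this collapses every divisibility question to a divisibility statement about the coefficients $f,g,K,A,B$ of the normal forms.

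For part (i), substituting \eqref{quadratic-ch-s12} gives $\chi(a_1x_1+a_2x_2)=\tfrac14 K(a_1x_1+a_2x_2)+f(a_1x_2-a_2x_1)$; the first summand is already a multiple of $h=a_1x_1+a_2x_2$, so invariance is equivalent to $h\mid f$. As $f$ is affine, this holds for every $h$ when $f=0$ (infinite), for one direction when $f$ is a nonzero form in $x_1,x_2$ (exactly one), and for no $h$ when $f$ carries an $x_3,x_4$ or constant term (none), giving the stated trichotomy. For the second family,
\[
\chi(a_3x_3+a_4x_4)=\tfrac12(a_3k_3+a_4k_4)\Phi+g(a_3x_4-a_4x_3),\qquad \Phi=-a^2(x_1^2+x_2^2)+x_3^2+x_4^2+a^4-1.
\]
Since $h\mid\chi(h)$ forces $\chi(h)(0)=0$ while $\Phi(0)=a^4-1\neq0$, I get the constraint $a_3k_3+a_4k_4=0$; this is the structural source of the asymmetry, as the $\Phi$-term is \emph{not} a multiple of $h$ and must be killed. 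When $K\neq0$ this pins $(a_3:a_4)$ to the unique direction orthogonal to $(k_3,k_4)$, after which invariance reduces to $h\mid g$, so the count is $0$ or $1$; the only escape is $K=0$ with $g\equiv0$, i.e.\ the degenerate field $P_3\equiv P_4\equiv0$, which is discarded.

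For part (ii) the same computation with \eqref{kolmogorov-form-s12} yields $\chi(a_1x_1+a_2x_2)=\tfrac14 K h+\alpha\,x_1x_2(a_1x_2-a_2x_1)$, so invariance amounts to $h\mid\alpha\,x_1x_2(a_1x_2-a_2x_1)$; since the finiteness hypothesis forces $\alpha\neq0$ (else every such $h$ is invariant) and $h\nmid(a_1x_2-a_2x_1)$, the only options are $h\sim x_1$ or $h\sim x_2$. The delicate case is
\[
\chi(a_3x_3+a_4x_4)=\tfrac12\Psi(a_3k_{33}x_3+a_4k_{44}x_4)+\beta\,x_3x_4(a_3x_4-a_4x_3),\qquad \Psi=-a^2(x_1^2+x_2^2)+x_3^2+x_4^2+a^4-1.
\]
I will split $\chi(h)=Lh$ into homogeneous degrees: the degree-one part forces $h\mid(a_3k_{33}x_3+a_4k_{44}x_4)$, which for $a_3,a_4\neq0$ requires $k_{33}=k_{44}$, and the degree-three part then forces $\beta=0$. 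As $\beta=0$ with $k_{33}=k_{44}$ again makes every $h$ invariant, finiteness rules out any $h$ with $a_3,a_4\neq0$, leaving only $x_3=0$ and $x_4=0$ (invariant since $x_3\mid P_3$, $x_4\mid P_4$). Thus the only invariant hyperplanes of the two forms are $x_1=0,x_2=0,x_3=0,x_4=0$.

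For part (iii), \eqref{type-n-form-s12} gives the clean identities $\chi(a_1x_1+a_2x_2)=A(a_1x_2-a_2x_1)$ and $\chi(a_3x_3+a_4x_4)=B(a_3x_4-a_4x_3)$, so by coprimality the equivalences $h\mid A$ and $h\mid B$ follow immediately. Distinct invariant hyperplanes $a_1x_1+a_2x_2=0$ correspond to pairwise non-proportional linear factors of $A$; being coprime, their product divides $A$ and has degree equal to their number, so there are at most $\deg A=n-1$ of them, and likewise at most $n-1$ for $B$. Sharpness is shown by $A=\prod_{j=1}^{n-1}(x_1-c_jx_2)$ with distinct $c_j\in\RR$, which is homogeneous of degree $n-1$ with exactly $n-1$ such factors. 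The main obstacle throughout is the $(x_3,x_4)$ analysis in parts (i) and (ii): the cofactor-bearing term carries the defining polynomial $\Phi$ (resp.\ $\Psi$) rather than a multiple of $h$, so $h$ cannot be factored out directly, and one must split $\chi(h)=Lh$ by degree and argue vanishing separately, while honestly isolating the degenerate sub-cases ($P_3\equiv P_4\equiv0$; and $\beta=0$ with $k_{33}=k_{44}$) that the nondegeneracy and finiteness hypotheses exclude.
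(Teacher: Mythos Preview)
Your argument is correct, but it proceeds by a different mechanism than the paper's. The paper computes the extactic polynomials $\mathcal{E}_{\langle x_1,x_2\rangle}(\chi)$ and $\mathcal{E}_{\langle x_3,x_4\rangle}(\chi)$ explicitly and invokes Proposition~\ref{extactic-polynomial} to bound the invariant hyperplanes by the linear factors of these determinants; part (ii) in the paper is literally the single sentence ``This follows immediately from Proposition~\ref{extactic-polynomial}.'' You instead compute $\chi(h)$ directly from the normal forms and reduce everything to the elementary divisibility criterion $h\mid\chi(h)$, using only that $a_1x_1+a_2x_2$ and $a_1x_2-a_2x_1$ are coprime over~$\RR$. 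Your route avoids the extactic machinery entirely and yields genuine if-and-only-if statements at each step (for instance, in part~(iii) you get both directions at once, whereas the paper checks one direction by hand and cites Proposition~\ref{extactic-polynomial} for the other). You are also more explicit about the degenerate sub-cases: in part~(i) you isolate the case $K=0$, $g\equiv0$ (i.e.\ $P_3\equiv P_4\equiv0$) where infinitely many $a_3x_3+a_4x_4=0$ are invariant, and in part~(ii) you isolate $\beta=0$ with $k_{33}=k_{44}$; the paper's proof does not flag these, so your treatment is in fact slightly more careful. The extactic approach has the advantage of packaging the count as a single factorization problem, which scales more uniformly to larger subspaces~$W$; your approach is more elementary and transparent for the two-dimensional $W$'s at hand.
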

\begin{proof}
\begin{enumerate}[(i)]
    \item Suppose $\chi$ is of the form \eqref{quadratic-ch-s12}. Then 
   $$\mathcal{E}_{\langle x_1,x_2\rangle}(\chi)=-f(x_1^2+x_2^2) \text{ and }$$
   $$\mathcal{E}_{\langle x_3,x_4\rangle}(\chi)= \frac{1}{2}(-a^2(x_1^2+x_2^2)+x_3^2+x_4^2+a^4-1)(k_4x_3-k_3x_4)-g(x_3^2+x_4^2).$$
From Proposition \ref{extactic-polynomial}, if $a_1x_1+a_2x_2=0$ is invariant under $\chi$ then $a_1x_1+a_2x_2$ must divide $\mathcal{E}_{\langle x_1,x_2\rangle}(\chi)$, which implies that $a_1x_1+a_2x_2$ divides $f$. Hence if $f$ has no divisor of the form $a_1x_1+a_2x_2$, then $\chi$ has no such invariant hyperplane. If $f$ is a non-zero polynomial having a divisor of the form $a_1x_1+a_2x_2$, then $f$ has exactly one such invariant hyperplane, namely $f=0$. If $f$ is zero polynomial, then one can check that any hyperplane of the form $a_1x_1+a_2x_2=0$ is invariant under $\chi$.

 Suppose $a_3x_3+a_4x_4=0$ is invariant under $\chi$. Then by Proposition \ref{extactic-polynomial}, the polynomial $a_3x_3+a_4x_4$ must divide $\mathcal{E}_{\langle x_3,x_4\rangle}(\chi)$, say $\mathcal{E}_{\langle x_3,x_4\rangle}(\chi)=K^{'}(a_3x_3+a_4x_4)$. Suppose $c$ is the constant term of $K^{'}$. So comparing degree 1 terms in the previous equation, we get $\frac{a^4-1}{2}k_4=ca_3$ and $-\frac{a^4-1}{2}k_3=ca_4$. If $c\neq 0$ then $k_4x_3-k_3x_4=0$ is invariant since $a_3x_3+a_4x_4=\frac{a^4-1}{2c}(k_4x_3-k_3x_4)$. But $\chi(k_4x_3-k_3x_4)=g(k_3x_3+k_4x_4)$ and $k_4x_3-k_3x_4$ doesnot divide $k_3x_3+k_4x_4$, hence $k_4x_3-k_3x_4$ is a divisor of $g$. So $a_3x_3+a_4x_4$ is a divisor of $g$.

If $c=0$ then $k_3=k_4=0$, which implies that $a_3x_3+a_4x_4$ divides $g$ since ${a_3x_3+a_4x_4}$ divides $\mathcal{E}_{\langle x_3,x_4\rangle}(\chi)$. Hence, the number of invariant hyperplanes of the form ${a_3x_3+a_4x_4=0}$ of $\chi$  is either 0 or 1, depending on the divisor of the linear polynomial $g$.
\item This follows immediately from Proposition \ref{extactic-polynomial}.
\item We can check that if $a_1x_1+a_2x_2$ is a factor of $A$ then $a_1x_1+a_2x_2=0$ is invariant hyperplane of $\chi$. Similarly, if $a_3x_3+a_4x_4$ is a factor of $B$ then $a_3x_3+a_4x_4=0$ is invariant hyperplane of $\chi$.
\par Converse parts follow immediately from Proposition \ref{extactic-polynomial}.
\end{enumerate}
\end{proof}

One can see that for a degree $n$ vector field $\chi$, the extactic polynomial $\mathcal{E}_{\langle x_1,x_2,x_3\rangle}(\chi)$ has degree at most $3n$. Hence the number of invariant meridian hyperplanes of $\chi$ is at most $3n$. But in some special cases, we could reduce the bound.

\begin{theorem}
Suppose $\chi$ is a Type-$n$ vector field on $S^1\times S^2$. Assume that $\chi$ has finitely many invariant meridian hyperplanes, taking into account their multiplicities. Then the maximum number of invariant meridian hyperplanes is $3n-2$. Also, this bound is sharp.
\end{theorem}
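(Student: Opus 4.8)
The plan is to compute the extactic polynomial $\mathcal{E}_{\langle x_1,x_2,x_3\rangle}(\chi)$ and to read the bound off a forced factorization. By Theorem~\ref{type-n-s12}, a Type-$n$ vector field on $S^1\times S^2$ is $\chi=(Ax_2,-Ax_1,Bx_4,-Bx_3)$ with $A,B$ homogeneous of degree $n-1$. First I would record $\chi(x_1)=Ax_2$, $\chi(x_2)=-Ax_1$, $\chi(x_3)=Bx_4$, together with the second iterates $\chi^2(x_1)=(\chi A)x_2-A^2x_1$, $\chi^2(x_2)=-(\chi A)x_1-A^2x_2$, and $\chi^2(x_3)=(\chi B)x_4-B^2x_3$, and then expand the $3\times 3$ determinant defining $\mathcal{E}_{\langle x_1,x_2,x_3\rangle}(\chi)$.

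The key computational step is that, after expansion, the terms carrying $\chi A$ cancel inside the $(1,3)$-cofactor (which collapses to $-A^3(x_1^2+x_2^2)$) and the two copies of $A^2Bx_1x_2x_4$ cancel against one another, leaving the clean factorization
$$
\mathcal{E}_{\langle x_1,x_2,x_3\rangle}(\chi)=(x_1^2+x_2^2)\Big[x_4\big(B\,\chi A-A\,\chi B\big)+A\,x_3\,(B^2-A^2)\Big].
$$
Since the three rows have degrees $1$, $n$, and $2n-1$, the polynomial $\mathcal{E}_{\langle x_1,x_2,x_3\rangle}(\chi)$ is homogeneous of degree $3n$, so the bracketed factor $Q$ is homogeneous of degree $3n-2$. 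The finiteness hypothesis guarantees $\mathcal{E}_{\langle x_1,x_2,x_3\rangle}(\chi)\not\equiv 0$ (equivalently $A\neq 0$), so all multiplicities are finite. I expect this determinant expansion, and in particular spotting the two cancellations that produce the $(x_1^2+x_2^2)$ factor, to be the main obstacle.

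For the upper bound I would apply Proposition~\ref{extactic-polynomial}: every invariant meridian hyperplane $\{g=0\}$, with $g=a_1x_1+a_2x_2+a_3x_3$, satisfies $g\mid \mathcal{E}_{\langle x_1,x_2,x_3\rangle}(\chi)$, and its multiplicity is the exact power of $g$ dividing this polynomial. As $x_1^2+x_2^2$ is irreducible over $\RR$, it is coprime to every real linear form $g$; hence $(x_1^2+x_2^2)\prod_i g_i^{k_i}$ divides $\mathcal{E}_{\langle x_1,x_2,x_3\rangle}(\chi)$, where the $g_i$ are the distinct invariant meridian forms and $k_i$ their multiplicities. Comparing degrees yields $2+\sum_i k_i\le 3n$, that is $\sum_i k_i\le 3n-2$.

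Finally, for sharpness I would take $B=0$ and $A=\ell_1\cdots\ell_{n-1}$, a product of $n-1$ distinct real linear meridian forms, none equal to $x_3$. Then $\mathcal{E}_{\langle x_1,x_2,x_3\rangle}(\chi)=-A^3x_3(x_1^2+x_2^2)$, whose only real linear meridian factors are the $\ell_i$ (each to the third power) and $x_3$. A direct check that $\chi(g)=Kg$ confirms each $\ell_i=0$ is invariant (using $\ell_i\mid A$) and that $x_3=0$ is invariant (since $\chi(x_3)=Bx_4=0$); moreover $\mathcal{E}_{\langle x_1,x_2,x_3\rangle}(\chi)\neq 0$, so the finiteness hypothesis holds. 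Counting with multiplicity gives $3(n-1)+1=3n-2$, so the bound is attained.
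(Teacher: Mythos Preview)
Your upper-bound argument is correct and matches the paper's: both show that $(x_1^2+x_2^2)$ divides $\mathcal{E}_{\langle x_1,x_2,x_3\rangle}(\chi)$, the paper via a column operation $C_1\mapsto x_1C_1+x_2C_2$, you via a direct expansion that in fact yields the explicit formula
\[
\mathcal{E}_{\langle x_1,x_2,x_3\rangle}(\chi)=(x_1^2+x_2^2)\big[x_4(B\,\chi A-A\,\chi B)+A\,x_3\,(B^2-A^2)\big],
\]
which is a nice bonus. One small inaccuracy: the parenthetical ``equivalently $A\neq 0$'' is not right, since $A=B\neq 0$ also forces $\mathcal{E}\equiv 0$; but you only need the implication ``finiteness $\Rightarrow\mathcal{E}\not\equiv 0$'', which is fine.

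The genuine gap is in your sharpness example. Taking $B=0$ gives $P_3=Bx_4=0$ and $P_4=-Bx_3=0$, so $\chi=(Ax_2,-Ax_1,0,0)$ is \emph{not} a Type-$n$ vector field: the paper's definition requires each $P_i$ to be homogeneous of degree $n$, and the separate notion of ``Pseudo Type-$n$'' (where $P_4=0$ is singled out) makes clear that the zero polynomial is excluded. Thus your example does not lie in the class covered by the theorem. The paper instead takes $B=cA$ with $c\in\RR\setminus\{0,\pm 1\}$; then your own formula collapses to $\mathcal{E}=(c^2-1)(x_1^2+x_2^2)A^3x_3$, and choosing $A$ divisible by $x_3$ (the paper uses $A=x_1^{n-2}x_3$) guarantees that $\{x_3=0\}$ is invariant, since $\chi(x_3)=cAx_4$ is then a multiple of $x_3$. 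With your choice ``none of the $\ell_i$ equal to $x_3$'' and $B\neq 0$, the hyperplane $\{x_3=0\}$ would fail to be invariant, so even the count $3(n-1)+1$ would not be realized by actual invariant meridians.
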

\begin{proof}
The vector field $\chi=(P_1,...,P_4)$ is of the form \eqref{type-n-form-s12}. Now
$$
\mathcal{E}_{\langle x_1, x_2, x_3\rangle}(\chi) =
\begin{vmatrix}
x_1 &x_2 &x_3\\
Ax_2 &-Ax_1 &Bx_4\\
\chi(Ax_2) &\chi(-Ax_1) &\chi(Bx_4)
\end{vmatrix}
$$
Suppose $C_i$ is the $i$-th column of the above determinant. Then performing the column operation $C_1=x_1C_1+x_2C_2$, we get
$$
    \mathcal{E}_{\langle x_1, x_2, x_3\rangle}(\chi) =
\begin{vmatrix}
x_1^2+x_2^2 &x_2 &x_3\\
0 &-Ax_1 &Bx_4\\
x_1\chi(Ax_2)+x_2\chi(-Ax_1) &\chi(-Ax_1) &\chi(Bx_4)
\end{vmatrix}
$$
Note that $x_1\chi(Ax_2)+x_2\chi(-Ax_1)=x_1(\chi(A)x_2+AP_2)-x_2(\chi(A)x_1+AP_1)=-A^2(x_1^2+x_2^2)$.
Hence, $x_1^2+x_2^2$ divides $\mathcal{E}_{\langle x_1, x_2, x_3\rangle}(\chi)$. But $x_1^2+x_2^2$ has no linear factor over $\RR$. So $\mathcal{E}_{\langle x_1,x_2,x_3\rangle}(\chi)$ can have maximum $3n-2$ divisors of the form $a_1x_1+a_2x_2+a_3x_3$ since the extactic polynomial has degree at most $3n$.
  Hence, by Proposition \ref{extactic-polynomial}, $\chi$ can have at most $3n-2$ invariant hyperplanes(counting multiplicity) of the form $a_1x_1+a_2x_2+a_3x_3=0$.
\par To show that the bound can be obtained, consider the following polynomials.
\begin{equation*}
    P_1=x_1^{n-1}x_2x_3, P_2=-x_1^{n}x_3, P_3=cx_1^{n-1}x_3x_4, P_4=-cx_1^{n-1}x_3^2,
\end{equation*}
where $c\in \RR\backslash \{\pm 1\}, n\geq 2$. Then by Theorem \ref{type-n-s12}, $\chi=(P_1,...,P_4)$ is a Type-$n+1$ vector field on $S^1\times S^2$. One can compute $\mathcal{E}_{\langle x_1,x_2,x_3 \rangle}(\chi)=(x_1^2+x_2^2)(1-c^2)x_1^{3n-3}x_3^4$. Hence, $x_1=0$ and $x_3=0$ are the only invariant hyperplanes of $\chi$ with multiplicities $3n-3$ and $4$, respectively.
\end{proof}
\begin{theorem}
    Suppose $\chi=(P_1,P_2,P_3,0)$ is a Pseudo Type-$n$ vector field on $S^2\times S^1$ and $\chi$ has finitely many invariant meridian hyperplanes, taking into account their multiplicities. If $x_1^2+x_2^2+x_3^2$ divides $P_1,P_2,P_3$ then the maximum number of invariant meridian hyperplanes is $3n-6$.
\end{theorem}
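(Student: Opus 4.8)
The plan is to adapt the extactic-polynomial technique used for Type-$n$ vector fields on $S^1\times S^2$, but to extract a square factor of degree six rather than two. By Theorem \ref{pseudo-type-n}, $\chi$ has the form $P_1=Ax_2+Bx_3$, $P_2=-Ax_1+Cx_3$, $P_3=-Bx_1-Cx_2$, $P_4=0$ with $A,B,C$ homogeneous of degree $n-1$, so in particular the defining relation $P_1x_1+P_2x_2+P_3x_3=0$ holds. Writing $S:=x_1^2+x_2^2+x_3^2$, the crucial first observation is that
$$\chi(S)=2(P_1x_1+P_2x_2+P_3x_3)=0,$$
since $P_4=0$ and $S$ does not involve $x_4$. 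This single identity is what makes the determinant collapse.

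First I would record the extactic polynomial
$$\mathcal{E}_{\langle x_1,x_2,x_3\rangle}(\chi)=\begin{vmatrix} x_1 & x_2 & x_3\\ P_1 & P_2 & P_3\\ \chi(P_1) & \chi(P_2) & \chi(P_3)\end{vmatrix},$$
whose rows have degrees $1$, $n$ and $2n-1$, so $\deg\mathcal{E}_{\langle x_1,x_2,x_3\rangle}(\chi)\leq 3n$. Using the hypothesis that $S$ divides each $P_i$, write $P_i=S\widetilde P_i$ with $\widetilde P_i$ homogeneous of degree $n-2$. Because $\chi(S)=0$, we get $\chi(P_i)=S\,\chi(\widetilde P_i)$, so $S$ factors out of both the second and the third rows. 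This yields $\mathcal{E}_{\langle x_1,x_2,x_3\rangle}(\chi)=S^2D$, where $D$ is the $3\times 3$ determinant obtained by replacing each $P_i$ with $\widetilde P_i$.

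The next step, and the only one requiring care, is to squeeze one further factor of $S$ out of $D$. Applying the column operation $C_1\mapsto x_1C_1+x_2C_2+x_3C_3$ multiplies $D$ by $x_1$ and makes the first column have entries $S$, then $x_1\widetilde P_1+x_2\widetilde P_2+x_3\widetilde P_3$, then $x_1\chi(\widetilde P_1)+x_2\chi(\widetilde P_2)+x_3\chi(\widetilde P_3)$. Dividing $P_1x_1+P_2x_2+P_3x_3=0$ by $S$ gives $\sum_{i=1}^3\widetilde P_ix_i=0$, so the middle entry vanishes; applying $\chi$ to $\sum_{i=1}^3\widetilde P_ix_i=0$ and using $\chi(x_i)=P_i=S\widetilde P_i$ gives $\sum_{i=1}^3\chi(\widetilde P_i)x_i=-S\sum_{i=1}^3\widetilde P_i^2$, so the bottom entry is divisible by $S$. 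Expanding $x_1D$ along this first column then exhibits $S$ as a common factor of both surviving terms, and since $\gcd(S,x_1)=1$ we conclude $S\mid D$. Hence $S^3$ divides $\mathcal{E}_{\langle x_1,x_2,x_3\rangle}(\chi)$.

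To conclude, note that the assumption of finitely many invariant meridian hyperplanes (counted with multiplicity) forces $\mathcal{E}_{\langle x_1,x_2,x_3\rangle}(\chi)\neq 0$. Since $S$ is positive definite it has no real linear factor $a_1x_1+a_2x_2+a_3x_3$, so the degree-six factor $S^3$ supplies none of the meridian directions. By Proposition \ref{extactic-polynomial}, any invariant meridian hyperplane $a_1x_1+a_2x_2+a_3x_3=0$ has $a_1x_1+a_2x_2+a_3x_3$ dividing $\mathcal{E}_{\langle x_1,x_2,x_3\rangle}(\chi)$; being coprime to $S$, it must divide the quotient $\mathcal{E}_{\langle x_1,x_2,x_3\rangle}(\chi)/S^3$, whose degree is at most $3n-6$. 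Counting with multiplicities, this bounds the number of invariant meridian hyperplanes by $3n-6$. The only real obstacle is the determinant bookkeeping that extracts the third factor of $S$; once $S^3\mid\mathcal{E}_{\langle x_1,x_2,x_3\rangle}(\chi)$ is established, the degree count and the coprimality argument are routine.
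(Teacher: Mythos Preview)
Your proof is correct and follows essentially the same approach as the paper: both arguments show $S^3\mid\mathcal{E}_{\langle x_1,x_2,x_3\rangle}(\chi)$ using the identity $\chi(S)=0$ together with the column operation $C_1\mapsto x_1C_1+x_2C_2+x_3C_3$, and then finish by the degree count and the fact that $S$ has no real linear factor. The only cosmetic difference is the order of extraction: you pull $S^2$ from rows two and three first and then obtain the remaining factor of $S$ via the column trick on the reduced determinant, whereas the paper performs the column operation on the original determinant and then factors $S$ once from the first column and once from each of rows two and three.
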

\begin{proof}
The vector field $\chi$ is of the form \eqref{pseudo-type-n-form-s21}. Now
$$
\mathcal{E}_{\langle x_1,x_2,x_3 \rangle}(\chi)=\begin{vmatrix}
    x_1 &x_2 &x_3\\
    P_1 &P_2 &P_3\\
    \chi(P_1) &\chi(P_2) &\chi(P_3)
\end{vmatrix}
$$
Suppose $C_i$ is the $i$-th column of the above determinant. Then performing the column operation $C_1=x_1C_1+x_2C_2+x_3C_3$, we get
$$\mathcal{E}_{\langle x_1,x_2,x_3 \rangle}(\chi)=\begin{vmatrix}
     x_1^2+x_2^2+x_3^2 &x_2 &x_3\\
    x_1P_1+x_2P_2+x_3P_3 &x_2P_2 &x_3P_3\\
    x_1\chi(P_1)+x_2\chi(P_2)+x_3\chi(P_3) &x_2\chi(P_2) &x_3\chi(P_3)
\end{vmatrix}$$
We get $x_1P_1+x_2P_2+x_3P_3=0$ from \eqref{pseudo-type-n-form-s21}. Also
\begin{dmath*}
    \begin{split}
        &x_1\chi(P_1)+x_2\chi(P_2)+x_3\chi(P_3)\\
        =&x_1\chi(Ax_2+Bx_3)+x_2\chi(-Ax_1+Cx_3)+x_3\chi(-Bx_1-Cx_2)\\
        =&x_1(\chi(A)x_2+AP_2+\chi(B)x_3+BP_3)+x_2(-\chi(A)x_1-AP_1+\chi(C)x_3+CP_3)\\ & +x_3(-\chi(B)x_1-BP_1-\chi(C)x_2-CP_2)\\
        =&-P_1(Ax_2+Bx_3)-P_2(-Ax_1+Cx_3)-P_3(-Bx_1-Cx_2)\\
        =&-P_1^2-P_2^2-P_3^2.
    \end{split}
\end{dmath*}
Hence $$\mathcal{E}_{\langle x_1,x_2,x_3 \rangle}(\chi)=\begin{vmatrix}
     x_1^2+x_2^2+x_3^2 &x_2 &x_3\\
    0 &x_2P_2 &x_3P_3\\
    -P_1^2-P_2^2-P_3^2 &x_2\chi(P_2) &x_3\chi(P_3)
\end{vmatrix}.$$

Observe that if $\sum\limits_{i=1}^3 x_i^2$ divides $P_j$, say $P_j=L(x_1^2+x_2^2+x_3^2)$ then $$\chi(P_j)=\chi(L(\sum\limits_{i=1}^3 x_i^2))=\chi(L)(\sum\limits_{i=1}^3 x_i^2)+2L(\sum\limits_{i=1}^3 x_iP_i)=\chi(L)(\sum\limits_{i=1}^3 x_i^2).$$ Hence, $x_1^2+x_2^2+x_3^2$ divides $P_1,P_2,P_3$ implies that $(x_1^2+x_2^2+x_3^2)^3$ is a divisor of $\mathcal{E}_{\langle x_1,x_2,x_3\rangle}(\chi)$ since we can take out $x_1^2+x_2^2+x_3^2$ from the first column, second row and third row.

Now since $\mathcal{E}_{\langle x_1,x_2,x_3\rangle}(\chi)$ has degree $3n$ and $x_1^2+x_2^2+x_3^2$ has no divisor of the form $a_1x_1+a_2x_2+a_3x_3$, the result follows.
\end{proof}
\begin{example}
Consider the Pseudo Type-1 vector field $\chi$ of the form \eqref{pseudo-type-n-form-s21} on $S^2\times S^1$. Now,
$\chi(a_1x_1+a_2x_2+a_3x_3)=(-a_2A-a_3B)x_1 + (a_1A-a_3C)x_2 + (a_1B+a_2C)x_3$. If $\chi$ has invariant hyperplane of the form $a_1x_1+a_2x_2+a_3x_3=0$ then there exists a constant $K$ such that $\chi(a_1x_1+a_2x_2+a_3x_3)=K(a_1x_1+a_2x_2+a_3x_3)$, i.e., $$a_1(Ax_2+Bx_3)+a_2(-Ax_1+Cx_3)+a_3(-Bx_1-Cx_2)=K(a_1x_1+a_2x_2+a_3x_3).$$ Hence comparing coefficients both sides, we obtain $-a_2A-a_3B=Ka_1$, ${a_1A-a_3C=Ka_2}$, $a_1B+a_2C =Ka_3$. Now $$a_1(-a_2A-a_3B)+a_2(a_1A-a_3C)+a_3(a_1B+a_2C)=0=K(a_1^2+a_2^2+a_3^2).$$ So $K=0$ since atleast one of $a_1,a_2,a_3$ is non-zero. This implies $a_2=-a_3 \frac{B}{A}$ and ${a_1=a_3 \frac{C}{A}}$ (assuming $A\neq 0$). Hence $\chi$ has exactly one invariant meridian hyperplane, namely  the hyperplane $Cx_1-Bx_2+Ax_3=0$.
\end{example}
A Type-$n$ vector field on $S^1\times S^2$ has no invariant parallel hyperplane. However, every parallel hyperplane is invariant under a Pseudo Type-$n$ vector field on $S^2\times S^1$.

\begin{theorem}\label{parallel-bound-s12}
Let $\chi$ be a degree $n$ polynomial vector field on $S^1\times S^2$. Assume that $\chi$ has finitely many invariant parallel hyperplanes, taking into account their multiplicities. Then the maximum number of invariant parallel hyperplanes is $(n-1)$. Also, this bound can be reached.
\end{theorem}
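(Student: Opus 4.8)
The plan is to realize the parallel hyperplanes as the invariant hypersurfaces lying in the two-dimensional space $W=\langle 1, x_4\rangle$ and to apply Proposition \ref{extactic-polynomial}. Since $\chi(1)=0$ and $\chi(x_4)=P_4$, the associated extactic polynomial is
\[
\mathcal{E}_{\langle 1, x_4\rangle}(\chi)=\begin{vmatrix} 1 & x_4 \\ 0 & P_4 \end{vmatrix}=P_4 .
\]
The hypothesis that $\chi$ has only finitely many invariant parallel hyperplanes (with multiplicity) forces $\mathcal{E}_{\langle 1,x_4\rangle}(\chi)=P_4\neq 0$, for otherwise the multiplicity would be infinite. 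A parallel hyperplane $x_4=k$ is invariant precisely when $(x_4-k)\mid P_4$, and by Proposition \ref{extactic-polynomial} its multiplicity is the exponent of $(x_4-k)$ in $P_4$. Hence the number of invariant parallel hyperplanes, counted with multiplicity, equals the total multiplicity of the linear factors of $P_4$ of the form $x_4-k$, which is at most $\deg P_4\leq n$. The first task is therefore only to improve $n$ to $n-1$.

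The key step, and the main obstacle, is to exclude the extremal case in which this count equals $n$. If it did, then since degrees are additive on products, $P_4$ would have to equal $c\prod_j (x_4-k_j)^{m_j}$ with $\sum_j m_j=\deg P_4=n$; in particular $P_4$ would be a polynomial in $x_4$ alone that splits into real linear factors. To derive a contradiction I would substitute $x_1=x_2=x_3=0$ into the defining relation \eqref{vectorfield-s12}. The terms containing $P_1,P_2,P_3$ vanish and $G$ collapses to $x_4^2+a^4-1$, leaving
\[
2x_4\,P_4(0,0,0,x_4)=K(0,0,0,x_4)\,(x_4^2+a^4-1).
\]
Because $a>1$, the quadratic $x_4^2+a^4-1$ has no real root and is coprime to $x_4$, so it must divide $P_4(0,0,0,x_4)=P_4(x_4)$. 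This is impossible for a nonzero $P_4$ that splits into real linear factors, and the contradiction yields the bound $n-1$.

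For sharpness I would produce, for each $n\ge 1$, an explicit degree-$n$ vector field attaining the bound. Choosing distinct reals $k_1,\dots,k_{n-1}$ and setting $B=\prod_{j=1}^{n-1}(x_4-k_j)$, consider
\[
\chi=(0,\,0,\,Bx_4,\,-Bx_3).
\]
This $\chi$ has degree $n$, and it is a vector field on $S^1\times S^2$ because $P_1x_1+P_2x_2=0$ and $P_3x_3+P_4x_4=Bx_3x_4-Bx_3x_4=0$, so the left-hand side of \eqref{vectorfield-s12} vanishes and $K=0$ works (equivalently $G$ is a first integral, in agreement with the Remark following Theorem \ref{type-n-s12}). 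Here $P_4=-x_3\prod_{j=1}^{n-1}(x_4-k_j)$, whose linear factors of the form $x_4-k$ are exactly $x_4-k_1,\dots,x_4-k_{n-1}$; the factor $x_3$ is not of this form. Thus $\chi$ has precisely $n-1$ invariant parallel hyperplanes $x_4=k_j$, showing the bound is sharp.

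Finally I would record the trivial boundary checks: that $P_4\ne 0$ rules out the infinite-multiplicity case throughout, and that for $n=1$ the statement reads as the (vacuous) absence of parallel hyperplanes, consistent with Corollary \ref{cor:linear-s12}. The genuinely delicate point is the extremal exclusion of the second paragraph; everything else reduces to degree counting together with the observation that the $S^1\times S^2$ relation, restricted to the $x_4$-axis, injects the real-root-free factor $x_4^2+a^4-1$ into $P_4$.
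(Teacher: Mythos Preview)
Your proof is correct and follows essentially the same approach as the paper: compute $\mathcal{E}_{\langle 1,x_4\rangle}(\chi)=P_4$, rule out the extremal case by specializing \eqref{vectorfield-s12} to $x_1=x_2=x_3=0$ and exploiting that $x_4^2+a^4-1$ has no real linear factor, and exhibit an explicit field with $P_3=Bx_4$, $P_4=-Bx_3$ for sharpness. The only cosmetic differences are that the paper phrases the contradiction via the degree of $K(0,0,0,x_4)$ rather than as a divisibility of $P_4$, and that its sharpness example also carries nonzero $P_1=Ax_2,\ P_2=-Ax_1$; your simplified example with $P_1=P_2=0$ works equally well.
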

\begin{proof}
$\mathcal{E}_{\langle 1,x_4\rangle}=P_4$. By Proposition \ref{extactic-polynomial}, $\chi$ can have maximum $n$ invariant parallel hyperplanes. Suppose $\chi$ has $n$ invariant parallel hyperplanes(counting multiplicities). Then $P_4=c \prod\limits_{i=1}^n (x_4-k_i)$($c\neq 0)$ for some $k_i\in \RR$. As $\chi$ is a vector field on $S^1\times S^2$, it satisfies \eqref{vectorfield-s12} i.e.,
\begin{equation}\label{polynomial-s12}
        4(x_1^2+x_2^2-a^2)(P_1x_1 + P_2x_2)+2(P_3x_3 + P_4x_4)=K((x_1^2+x_2^2-a^2)^2+x_3^2+x_4^2-1).
        \end{equation}
    Equation \eqref{polynomial-s12} is satisfied for each $(x_1,...,x_4)\in \RR$, in particular for $x_1=x_2=x_3=0$. So we get
    $$2cx_4\prod_{i=1}^n (x_4-k_i)=K(0,0,0,x_4)(x_4^2+a^4-1).$$
Note that $K(0,0,0,x_4)\neq 0$ as $c\neq 0$. Also, $x_4\prod\limits_{i=1}^n (x_4-k_i)$ is a factor of $K(0,0,0,x_4)$ since $(x_4^2+a^4-1)$ has no real linear factor. But $K(0,0,0,x_4)$ has degree at most $n-1$, which arises a contradiction. Thus $\chi$ can have at most $n-1$ invariant parallel hyperplanes.
    
    To show that the above bound is reached, consider the following polynomials.
\begin{equation*}
    P_1=Ax_2, P_2=-Ax_1, P_3=x_4\prod_{i=1}^{n-1} (x_4-k_i), P_4=-x_3\prod_{i=1}^{n-1} (x_4-k_i);
\end{equation*}
where $A$ is a polynomial of degree $n-1$ and $k_i$ are distinct real numbers. Then $P_i$'s satisfy \eqref{vectorfield-s12} and thus $\chi=(P_1,...,P_4)$ is a vector field on $S^1\times S^2$. Notice that each parallel hyperplane $x_4-k_i=0$ is invariant for $i\in \{1,...,n-1\}$.
\end{proof}
Theorem \ref{quadratic-ch-s21} tells that a quadratic vector field on $S^2\times S^1$ has either 0 or infinitely many (only when $c=0)$ invariant parallel hyperplanes.
\begin{theorem}
    Let $\chi$ be a degree $n\geq 3$ polynomial vector field on $S^2\times S^1$. Assume that $\chi$ has finitely many invariant parallel hyperplanes, taking into account their multiplicities. Then the maximum number of invariant parallel hyperplanes is $(n-1)$. Also, this bound can be reached.
\end{theorem}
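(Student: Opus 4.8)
The plan is to follow the template of Theorem \ref{parallel-bound-s12}, handling the upper bound through the extactic polynomial and then exhibiting an explicit family that attains it. First I would note that $\mathcal{E}_{\langle 1,x_4\rangle}(\chi)=\chi(x_4)-x_4\chi(1)=P_4$, so by Proposition \ref{extactic-polynomial} every invariant parallel hyperplane $x_4=k$ contributes a linear factor $(x_4-k)$ of $P_4$; conversely such a factor makes $x_4=k$ invariant with cofactor $P_4/(x_4-k)$, so the invariant parallel hyperplanes correspond exactly to the $x_4$-linear factors of $P_4$. Since $\chi$ has finitely many of them, $P_4\neq0$ (otherwise every parallel hyperplane is invariant), so there are at most $n$ counted with multiplicity. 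Suppose there were exactly $n$. Then $\prod_{i=1}^n(x_4-k_i)$ divides $P_4$, and as $\deg P_4\le n$ this forces $P_4=c\prod_{i=1}^n(x_4-k_i)$ with $c\neq0$, a polynomial in $x_4$ alone. Substituting $x_1=x_2=x_3=0$ into \eqref{vectorfield-s21} collapses the left-hand side to $2x_4P_4(0,0,0,x_4)$ and gives
\begin{equation*}
2cx_4\prod_{i=1}^n(x_4-k_i)=K(0,0,0,x_4)\,(x_4^2+b^4-1).
\end{equation*}
Here $K(0,0,0,x_4)\neq0$ since $c\neq0$, and because $b>1$ the quadratic $x_4^2+b^4-1$ has no real linear factor, hence is coprime to $x_4\prod(x_4-k_i)$; therefore $x_4\prod_{i=1}^n(x_4-k_i)$, of degree $n+1$, divides $K(0,0,0,x_4)$. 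This contradicts $\deg K\le n-1$, which I read off by comparing degrees in \eqref{vectorfield-s21} (the left side has degree at most $n+3$ while the right side has degree $\deg K+4$). Hence at most $n-1$ invariant parallel hyperplanes occur.

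For sharpness I must produce, for each $n\ge3$, a degree-$n$ vector field on $S^2\times S^1$ with exactly $n-1$ invariant parallel hyperplanes. The naive analogues of the $S^1\times S^2$ construction fall one short: any radial choice $P_i=x_i\phi(x_4)$ forces $P_4$ to carry the factor $x_4^2-1-b^2u$, where $u:=x_1^2+x_2^2+x_3^2-b^2$, consuming two degrees and leaving only $n-2$ for factors $(x_4-k)$. The remedy is to demand instead that $P_4=x_1\,\pi(x_4)$ for a degree-$(n-1)$ polynomial $\pi$, so that the lone non-parallel factor is $x_1$ rather than a quadratic. Feeding $P_4=x_1\pi$, $P_2=P_3=0$, and the ansatz $K=x_1\kappa(x_4)$ into \eqref{vectorfield-s21} and isolating $\sigma:=P_1x_1+P_2x_2+P_3x_3$, polynomiality of $\sigma$ (using $\gcd(u,x_1)=1$) forces $\kappa(x_4^2-1)=2x_4\pi$, which is solvable in polynomials precisely when $(x_4^2-1)\mid\pi$. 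I would therefore take
\begin{equation*}
\pi=(x_4^2-1)\prod_{i=1}^{n-3}(x_4-k_i),\qquad \kappa=2x_4\prod_{i=1}^{n-3}(x_4-k_i),
\end{equation*}
\begin{equation*}
P_1=\tfrac14\kappa u,\qquad P_2=P_3=0,\qquad P_4=x_1\pi,
\end{equation*}
with $k_i\in\RR\setminus\{\pm1\}$ distinct, then verify directly that \eqref{vectorfield-s21} holds with cofactor $K=x_1\kappa$ and that $\deg\chi=\deg P_1=(n-2)+2=n$. Since $\mathcal{E}_{\langle 1,x_4\rangle}(\chi)=P_4=x_1\pi$ has exactly the $n-1$ simple $x_4$-roots $\{1,-1,k_1,\dots,k_{n-3}\}$, the field has precisely $n-1$ invariant parallel hyperplanes.

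The routine part is the upper bound, which is a near-verbatim transcription of Theorem \ref{parallel-bound-s12} with $(x_1,x_2)$ replaced by $(x_1,x_2,x_3)$ and $a$ by $b$. The main obstacle is the sharpness construction: unlike on $S^1\times S^2$, the lone coordinate $x_4$ on the $S^1$-factor has no rotational partner, so a first-integral field ($K=0$) cannot reach the bound, and one is forced to work with a genuinely non-trivial cofactor. The crucial insight is the identity $\kappa(x_4^2-1)=2x_4\pi$, which simultaneously pins down $\kappa$ and reveals that two of the parallel hyperplanes must be $x_4=\pm1$; the last things I would check are that $P_1=\tfrac14\kappa u$ is genuinely a polynomial of degree $n$ and that no spurious parallel hyperplanes arise beyond the roots of $\pi$.
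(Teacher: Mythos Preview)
Your proposal is correct and follows essentially the same approach as the paper: the upper-bound argument is identical (the paper simply refers back to Theorem~\ref{parallel-bound-s12}), and your sharpness example is a minor variant of the paper's, which takes the symmetric choice $P_1=P_2=P_3=\tfrac14 x_4 u\prod_{i=1}^{n-3}(x_4-k_i)$ and $P_4=\tfrac12(x_1+x_2+x_3)(x_4^2-1)\prod_{i=1}^{n-3}(x_4-k_i)$ in place of your $P_2=P_3=0$, $P_4=x_1\pi$. Both constructions hinge on the same identity $\kappa(x_4^2-1)=2x_4\pi$ forcing the factors $x_4=\pm1$, and both yield cofactor $K=\ell\cdot 2x_4\prod(x_4-k_i)$ for a linear form $\ell$ in $x_1,x_2,x_3$.
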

\begin{proof}
    The arguments to the first part are similar to the proof of the first part of Theorem \ref{parallel-bound-s12}.
    
Consider the following polynomials to prove the second part.
    $$
P_1=P_2=P_3=\frac{1}{4}x_4(x_1^2+x_2^2+x_3^2-b^2)\prod\limits_{i=1}^{n-3} (x_4-k_i), P_4=\frac{1}{2}(x_1+x_2+x_3)(x_4^2-1)\prod\limits_{i=1}^{n-3} (x_4-k_i)
$$
where $k_i\neq \pm 1$ are distinct real numbers. Then $P_i$'s satisfy \eqref{vectorfield-s21} and thus $\chi=(P_1,...,P_4)$ is a vector field on $S^2\times S^1$. Notice that $x_4-1=0,x_4+1=0$ and $x_4-k_i=0$ are invariant parallel hyperplanes of $\chi$ for $i=1,...,n-3$. This completes the proof.
\end{proof}

\section{An Application}\label{sec:apps}
We recall the algebraic representation of $S^1\times S^2$ and $S^2\times S^1$ from \eqref{eq:s12} and \eqref{eq:s21} respectively. In this section, we study if there are polynomial diffeomorphisms between $S^1\times S^2$ and $S^2\times S^1$. 
Note that the spaces $S^1\times S^2$ and $S^2\times S^1$ are diffeomorphic via a polynomial map with each component linear polynomial when both the spaces are embedded in $\RR^5$. But this is not true when we consider $S^1\times S^2$ and $S^2\times S^1$ as hyperplanes embedded in $\RR^4$.


Let $F \colon \RR^4 \to \RR^4$ be a polynomial map. If $F|_{S^1 \times S^2} \colon S^1 \times S^2 \to S^2 \times S^1$ then $F|_{S^1 \times S^2}$ is called a polynomial map from $S^1 \times S^2$ to $S^2 \times S^1$. If $F|_{S^1 \times S^2}$ is a diffeomorphism, then it is called a polynomial diffeomorphism from $S^1 \times S^2$ to $S^2 \times S^1$. 

\begin{theorem}\label{thm:nopolydiff}
    There does not exist any polynomial diffeomorphism from $S^1\times S^2$ onto $S^2\times S^1$ with each component as a linear polynomial.
\end{theorem}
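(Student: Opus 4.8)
The plan is to turn the geometric hypothesis into a single polynomial identity and then derive a contradiction with injectivity. Write the affine map as $F(x)=Mx+v$ with $M=(m_{ij})_{1\le i,j\le4}$ and $v\in\RR^4$, and let $\ell_i=\sum_{j=1}^4 m_{ij}x_j$ denote the homogeneous degree-one part of the component $F_i$. Put $G_1=(x_1^2+x_2^2-a^2)^2+x_3^2+x_4^2-1$ and $G_2=(x_1^2+x_2^2+x_3^2-b^2)^2+x_4^2-1$, so that $S^1\times S^2=\{G_1=0\}$ and $S^2\times S^1=\{G_2=0\}$. Since $F$ carries $S^1\times S^2$ into $S^2\times S^1$, the polynomial $G_2\circ F$ vanishes on $\{G_1=0\}$. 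As $G_1$ is irreducible and its real zero set is a smooth hypersurface that is Zariski dense in the complex variety $\{G_1=0\}$, its vanishing ideal is generated by $G_1$, whence $G_1\mid G_2\circ F$. Because $G_2\circ F$ has degree at most $4=\deg G_1$, the quotient is a constant, so I would record the identity
\begin{equation*}
G_2\circ F=\lambda\,G_1\qquad\text{for some }\lambda\in\RR.
\end{equation*}

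Next I would compare the homogeneous parts of degree $4$ on both sides. The degree-four part of $G_1$ is $(x_1^2+x_2^2)^2$, while on the left only the square of the quadratic form $\ell_1^2+\ell_2^2+\ell_3^2$ survives, since the $y_4^2$ term has degree at most two and the lower-order pieces of $(\,\cdot-b^2)^2$ lower the degree. This yields
\begin{equation*}
\bigl(\ell_1^2+\ell_2^2+\ell_3^2\bigr)^2=\lambda\,(x_1^2+x_2^2)^2 .
\end{equation*}
The left-hand side is a square, hence nonnegative, forcing $\lambda\ge0$; and since $x_1^2+x_2^2$ is irreducible in $\RR[x_1,\dots,x_4]$, unique factorization gives $\ell_1^2+\ell_2^2+\ell_3^2=\mu\,(x_1^2+x_2^2)$ with $\mu=\sqrt{\lambda}\ge0$. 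Evaluating this identity at $x_1=x_2=0$ makes the sum of three real squares vanish identically in $x_3,x_4$, so each $\ell_i$ ($i=1,2,3$) has vanishing $x_3$- and $x_4$-coefficients. Consequently $F_1,F_2,F_3$ depend only on $x_1$ and $x_2$.

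Finally I would exploit this structural restriction against injectivity. Fix a point with $x_1^2+x_2^2=a^2$ (possible as $a>1$) and consider the circle
\begin{equation*}
C=\{(x_1,x_2,x_3,x_4):x_3^2+x_4^2=1\}\subset S^1\times S^2 .
\end{equation*}
On $C$ the functions $F_1,F_2,F_3$ are constant, so $F(C)$ has a single fixed triple $(y_1,y_2,y_3)$. But a point of $S^2\times S^1$ with $(y_1,y_2,y_3)$ prescribed satisfies $y_4^2=1-(y_1^2+y_2^2+y_3^2-b^2)^2$, hence admits at most two values of $y_4$. Therefore $F(C)$ is contained in a set of at most two points, which contradicts the injectivity of the diffeomorphism $F|_{S^1\times S^2}$ on the infinite set $C$; no such $F$ can exist.

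The step I expect to be the main obstacle is justifying the exact identity $G_2\circ F=\lambda G_1$, that is, that $G_1$ generates the full real vanishing ideal of $S^1\times S^2$; this requires checking the irreducibility of $G_1$ and the Zariski density of its real locus in the complex hypersurface. Once that is in hand, both the leading-form comparison and the final circle argument are routine, and the asymmetry between the rank-$2$ form $(x_1^2+x_2^2)^2$ and the rank-$3$ form $(x_1^2+x_2^2+x_3^2)^2$ is exactly what drives the contradiction.
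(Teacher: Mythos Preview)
Your proof is correct and takes a genuinely different route from the paper's. The paper argues via vector fields: it picks a Type-$1$ field $\chi=(Ax_2,-Ax_1,Bx_4,-Bx_3)$ on $S^1\times S^2$, pushes it forward along the putative affine diffeomorphism $F$ to obtain a linear vector field on $S^2\times S^1$, and then invokes the earlier classification (Corollaries~\ref{cor:linear-s12} and~\ref{linear-s21}) to force the fourth component of $F_*\chi$ to vanish identically. Playing this off against the explicit form of $F^{-1}$ and evaluating at particular points of $S^1\times S^2$ yields the contradiction. Your argument avoids vector fields entirely: from $G_1\mid G_2\circ F$ and the degree bound you extract the single identity $G_2\circ F=\lambda G_1$, read off the top forms to get $\ell_1^2+\ell_2^2+\ell_3^2=\mu(x_1^2+x_2^2)$, and conclude that $F_1,F_2,F_3$ ignore $x_3,x_4$; then a circle in $S^1\times S^2$ collapses to at most two points, contradicting injectivity. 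Your approach is more self-contained (it does not rely on the structure theorems proved earlier in the paper) and makes the underlying rank obstruction---the quartic $(x_1^2+x_2^2)^2$ cannot absorb the rank-three form $(\ell_1^2+\ell_2^2+\ell_3^2)^2$ nontrivially---completely transparent. The paper's approach, on the other hand, illustrates how the vector-field classifications already established can be leveraged, which is thematically fitting for the article.

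Regarding the step you flag: it goes through. The polynomial $G_1$ is irreducible over $\RR$ (viewed as a quadratic in $x_4$, it would factor only if $1-x_3^2-(x_1^2+x_2^2-a^2)^2$ were a square in $\RR[x_1,x_2,x_3]$, which it is not since it takes negative values), and $\nabla G_1$ is nowhere zero on $S^1\times S^2$; hence $G_1$ changes sign across its real locus and $(G_1)$ is a real ideal, so it equals the full vanishing ideal of $S^1\times S^2$. This is the only nontrivial input your argument needs, and once it is checked your leading-form comparison and the circle collapse are routine.
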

\begin{proof}
    Let $F:S^1\times S^2 \to S^2\times S^1$ be a polynomial diffeomorphism where $F=(F_1,...,F_4)$ with each $F_i$ linear. Any $F_i$ cannot be constant. Otherwise, $F$ will not be surjective.

Consider a Type-1 vector field $\chi=(P_1, ..., P_4)$ on $S^1\times S^2$.  Thus, by Corollary \ref{cor:linear-s12},  $$P_1=Ax_2, P_2=-Ax_1, P_3=Bx_4, P_4=-Bx_3$$ where $A$ and $B$ are non-zero real numbers. Next, we compute the push-forward vector field $F_* \chi$ on $S^2\times S^1$.
We consider the Jacobian matrix of $F$.
    $$
    DF(x)=\begin{pmatrix}
        \frac{\partial F_1}{\partial x_1}(x)& .&.&\frac{\partial F_1}{\partial x_4}(x)\\
        .&.&.&.\\
        .&.&.&.\\
        \frac{\partial F_4}{\partial x_1}(x)& .&.&\frac{\partial F_4}{\partial x_4}(x)
    \end{pmatrix}.
    $$
    
Observe that $D:=DF(x)$ is a constant matrix, and no row of $D$ is zero since $F_i$ is non-constant for $i=1, ..., 4$. 

Let $D= DF(x) =(d_{ij})_{4 \times 4}$ and $ {\bf y} \in S^2\times S^1$ such that $F^{-1}({\bf y})=(x_1,...,x_4)$. Note that $F^{-1} =(G_1, ..., G_4)$ with each $G_i$ is non-constant linear. Therefore $$F_* \chi({\bf y})=D (Ax_2, -Ax_1, Bx_4, -Bx_3)^t=D (AG_2({\bf y}), -A G_1{(\bf y)}, BG_4({\bf y}), -B G_3({\bf y}))^t.$$
Hence, it is a linear vector field on $S^2 \times S^1$. Thus, by Corollary \ref{linear-s21}, the fourth component of this vector field is zero. In particular, this is true for $A=0$, $B=1$, and $A=1, B=0$. So, $$d_{43}G_4({\bf y}) - d_{44} G_3({\bf y})=0, \quad \mbox{and} \quad d_{41}G_2({\bf y}) - d_{42} G_1({\bf y})=0$$ for any  $ {\bf y} \in S^2\times S^1$. Since $(d_{41}, d_{42}, d_{43}, d_{44})$ is non-zero and each $G_i$ is non-constant, we have the following cases. 

Suppose, if $d_{41} \neq 0$, then $d_{42} \neq 0$ and $G_2({\bf y}) = \frac{d_{42}}{d_{41}} G_1({\bf y})$. Since $F$ is a diffeomorphism, there exist ${\bf y}_1, {\bf y}_2 \in S^2 \times S^1$ such that $$(1, a, 0,0) = (G_1({\bf y}_1), G_2({\bf y}_1), G_3({\bf y}_1), G_4({\bf y}_1)), ~ (a, 1, 0,0) = (G_1({\bf y}_2), G_2({\bf y}_2), G_3({\bf y}_2), G_4({\bf y}_2)).$$ Thus $$1 = G_2({\bf y}_2)= \frac{d_{42}}{d_{41}} a = \frac{G_2({\bf y}_1)}{G_1({\bf y}_1)} a = a \times a =a^2.$$ This contradicts $a > 1$ which is needed to show that the set in \eqref{eq:s12} is manifold. 

If $d_{41}= 0$, then $d_{42}=0$, otherwise $G_1=0$. So assume, $d_{43}\neq 0$. Then $d_{44} \neq 0$ and $G_4({\bf y}) = \frac{d_{43}}{d_{44}} G_3({\bf y})$. Therefore, $G_3({\bf y}) =0$ for some ${\bf y}$ if and only if  $G_4({\bf y}) = 0$ for those ${\bf y}$. However, $(a, 2^{-\frac{1}{4}}, 2^{-\frac{1}{2}}, 0) \in S^1 \times S^2$. Thus there exists ${\bf y} \in S^2 \times S^1$ such that  $$(a, 2^{-\frac{1}{4}}, 2^{-\frac{1}{2}}, 0) = (G_1({\bf y}), G_2({\bf y}), G_3({\bf y}), G_4({\bf y})).$$ Hence, we arrived at a contradiction. Therefore, the claim is proved. 
\end{proof}

\begin{remark}
The above result justifies why a separate discussion is needed to study polynomial vector fields on $S^1\times S^2$ and $S^2\times S^1$.
\end{remark}

\noindent {\bf Acknowledgment.} 
The authors thank Joji Benny for the helpful discussion. 
The first author is supported by Prime Minister's Research Fellowship, Government of India. The second author thanks ICSR of IIT Madras for SEED grant from April 2021 to March 2024.
\bibliographystyle{abbrv}
\bibliography{biblio.bib}

\end{document}